\DeclareMathOperator{\Ric}{Ric}
\DeclareMathOperator{\interior}{Int}
\DeclareMathOperator{\spt}{spt}
\DeclareMathOperator{\dmn}{dmn}
\DeclareMathOperator{\divergence}{div}
\newtheorem{theo}{Theorem}[]
\newtheorem{prop}[theo]{Proposition}
\newtheorem{lemme}[theo]{Lemma}
\newtheorem{definition}[theo]{Definition}
\newtheorem{remarque}[theo]{Remark}
\begin{document}
\title[Existence of infinitely many minimal hypersurfaces]
{Existence of infinitely many minimal hypersurfaces in closed manifolds}

\author{Antoine Song}
\address{California Institute of Technology\\ 177 Linde Hall, \#1200 E. California Blvd., Pasadena, CA 91125}
\email{aysong@caltech.edu}
\thanks{The author was partially supported by NSF-DMS-1509027.}

\maketitle

\begin{abstract}
Using min-max theory, we show that in any closed Riemannian manifold of dimension at least $3$ and at most $7$, there exist infinitely many smoothly embedded closed minimal hypersurfaces. It proves a conjecture of S.-T. Yau. This paper builds on the methods developed by F. C. Marques and A. Neves.

\end{abstract}

\section{Introduction}

An important theme in Riemannian geometry is the study of the space of $k$-dimensional submanifolds in a Riemannian manifold with the $k$-volume functional by means of variational methods. For instance in the case of codimension one, a closed hypersurface $\Sigma$ in an $(n+1)$-manifold $(M,g)$ is a minimal hypersurface when it is a critical point of the $n$-volume functional, and a natural question which arises is how to construct these special objects. Birkhoff \cite{Birkhoff} was the first to use a mountain-pass type argument to construct a closed geodesic (which is a minimal hypersurface of dimension one) in any $2$-sphere. In higher dimension, Almgren and Pitts developed a similar but much more complex theory in order to construct minimal hypersurfaces: they showed that in any closed Riemannian manifold of dimension at least $3$ and less than $7$, there exists a smoothly embedded closed minimal hypersurface \cite{P}. This result was shortly after extended to the case of dimension $7$ by Schoen and Simon \cite{SchoenSimon}. For dimensions higher than $7$, there still exists a minimal hypersurface, but it may be singular along a set of Hausdorff codimension at least $7$. Motivated by these results, in the early 80's Yau formulated the following conjecture.
\newline
\\
\textbf{Conjecture} (S.-T. Yau \cite{Yauproblemsection}):
\textit{Any closed three-dimensional manifold must contain an infinite number of immersed minimal surfaces.}
\newline
\\

A recurring difficulty when dealing with minimal hypersurfaces produced by min-max theory is the issue of multiplicity. The same underlying $n$-dimensional connected minimal hypersurface endowed with different integer multiplicities represents different critical points of the $n$-volume functional from the variational point of view, which is problematic if one wants to find geometrically distinct minimal hypersurfaces.

Even in the case of geodesics ($n=1$), the analogous conjecture was settled only later, by the combined work of Franks \cite{Franks} and Bangert \cite{Bangert}. The first mentioned paper relies on dynamical systems methods while the second one uses Morse theoretic arguments specific to closed curves in surfaces. 

Up until a few years ago, general cases of the conjecture in higher dimensions remained elusive. In a paper which gave the impulsion to most of the recent activity on this question, Marques and Neves \cite{MaNeinfinity} proved the conjecture of Yau in the case where $(M^{n+1},g)$ is closed, $2\leq n\leq 6$ and $M$ satisfies the so-called Frankel property: any two closed embedded minimal hypersurfaces intersect each other. For instance this property is implied by $\Ric_g>0$. In their proof, they recast the sublinear bounds for a certain sequence of min-max widths, proved by Gromov \cite{Gromovwaist} and Guth \cite{Guth}, in the context of an extended Almgren-Pitts' min-max theory and argue as follows. If the Frankel property is satisfied, then each width is realized as an integer multiple of the area of a connected minimal hypersurface. But since the widths grow sublinearly, when they are all distinct a counting argument leads to a contradiction if there were only finitely many minimal hypersurfaces to begin with. When two widths coincide then a Lusternick-Schnirelmann type argument gives the result. Following this work, Marques and Neves went on with the development of a higher dimensional Morse theory for the $n$-volume functional \cite{MaNeindexbound} \cite{MaNemultiplicityone}.
 
Afterwards, Irie, Marques and Neves \cite{IrieMaNe} settled the conjecture for generic metrics (in the Baire sense) by showing that for generic metrics on a closed manifold with the usual restriction on the dimension, the union of minimal hypersurfaces is dense. This was later quantified into a generic equidistribution result in \cite{MaNeSong} by Marques, Neves and the author. The central ingredient of their proof is the Weyl law for the volume spectrum proved by Liokumovich, Marques and Neves \cite{LioMaNe}. 

Previously, Marques and Neves had devised another strategy to tackle the question: if for generic metrics the multiplicity of the minimal hypersurfaces produced by min-max was one, then since the widths are a sequence going to infinity, it would automatically lead to another proof of Yau's conjecture in the generic case. This is the Multiplicity One conjecture \cite{MaNeindexbound}, which was confirmed by X. Zhou \cite{Zhoumultiplicityone} in dimensions $3$ to $7$ . In dimension $3$, that was previously proved by Chodosh and Mantoulidis \cite{ChodMant} for bumpy metrics in the context of the Allen-Cahn version of min-max theory. (The latter was an alternative to the Almgren-Pitts theory, proposed by Guaraco \cite{Guaraco} and later extended by Gaspar and Guaraco \cite{GasGua}.)

The goal of this paper is to prove Yau's conjecture in the general case where the metric may not be generic. Our proof builds on \cite{MaNeinfinity}.

\begin{theo}\label{Yau}
In any closed Riemannian manifold of dimension at least $3$ and at most $7$, there exist infinitely many smoothly embedded closed minimal hypersurfaces.
\end{theo}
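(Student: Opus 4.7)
My plan is to argue by contradiction: assume $(M,g)$ contains only finitely many smoothly embedded closed minimal hypersurfaces $\Sigma_1,\dots,\Sigma_N$, and derive a contradiction by producing another one. If every pair among the $\Sigma_i$ intersects, then $M$ satisfies the Frankel property, and the result of Marques-Neves \cite{MaNeinfinity} cited above already gives infinitely many, contradiction. So the real work is the non-Frankel case: some pair of embedded closed minimal hypersurfaces in $M$ is disjoint.

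In that case I would pick a maximal subcollection of the $\Sigma_i$ which are pairwise disjoint, cut $M$ along it, and fix a connected component $N$ of the complement. Then $N$ is a compact manifold with minimal boundary, and by maximality every closed embedded minimal hypersurface lying in the interior of $N$ must intersect some other minimal hypersurface of $N$ or $\partial N$; in particular $N$ contains no closed embedded minimal hypersurface disjoint from all the others in its interior. The central construction is to extend $N$ by gluing half-infinite cylindrical ends $\partial N\times[0,\infty)$, equipped with a metric $\tilde g$ that agrees with $g$ near $\partial N$ and is a product metric far out, so that one obtains a complete non-compact manifold $(\tilde M,\tilde g)$ with bounded geometry and cylindrical ends, in which the original boundary components sit as stable minimal hypersurfaces.

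On $(\tilde M,\tilde g)$ I would develop an Almgren-Pitts-type min-max theory producing a sequence of widths $\tilde\omega_p(\tilde M,\tilde g)$ and reprove, in this non-compact setting, the sublinear upper bound $\tilde\omega_p \leq C\,p^{1/(n+1)}$ in the spirit of Gromov-Guth \cite{Gromovwaist, Guth}. The main obstacle—and the core technical content—is to show that the critical values are achieved by honest smooth closed embedded minimal hypersurfaces in $\tilde M$ which remain in a uniformly bounded region and do not drift off to infinity along the cylindrical ends. This should follow from a pull-tight argument combined with a barrier/maximum principle argument using the product slices $\partial N\times\{t\}$ as sweep-out obstacles; monotonicity of mass on the cylindrical ends together with the translation invariance at infinity should force any optimal min-max varifold to be either supported in a compact region (hence descending to a minimal hypersurface of $M$ via the cutting construction) or equal to such a slice, which can be excluded by choosing $p$ so that $\tilde\omega_p$ is strictly larger than the areas of the boundary components.

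Once these widths are known to be realized by compactly supported minimal hypersurfaces in $\tilde M$, the conclusion parallels \cite{MaNeinfinity}: either the $\tilde\omega_p$ are all distinct, in which case the sublinear growth together with a counting argument on integer multiples of the finitely many areas produces infinitely many distinct minimal hypersurfaces; or two widths coincide, in which case a Lusternik-Schnirelmann argument again produces a new one. Pulling back through the cutting-and-extension construction yields infinitely many smoothly embedded closed minimal hypersurfaces in $(M,g)$, contradicting our initial finiteness assumption. The hardest part I expect is the non-escape of critical varifolds on $(\tilde M,\tilde g)$ and the accompanying compactness and regularity theory on a manifold with cylindrical ends; the reduction to a Frankel-like situation after cutting and the sublinear width bounds are more routine adaptations of already-existing machinery.
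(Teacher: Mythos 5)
Your overall architecture — cut along disjoint minimal hypersurfaces, glue cylindrical ends, run min-max on the non-compact manifold, and show the critical varifolds are confined — does match the paper's strategy. But there is a fatal gap in the asymptotics, plus a subtler issue with the cutting scheme.

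The central problem is the claim that the widths of the cylindrical-ended manifold satisfy a sublinear upper bound $\tilde\omega_p \leq C\,p^{1/(n+1)}$. This is false. The Gromov--Guth bound is a feature of \emph{compact} manifolds (the constant $C$ scales like $\Vol^{n/(n+1)}$), and once you attach infinite cylinders $\partial N\times[0,\infty)$ the volume is infinite and the bound has no content. On a cylinder $\Sigma\times[0,L]$ with $L$ large you need roughly $p$ parallel slices to build a $p$-sweepout, so the widths grow \emph{linearly}: the paper's Theorem \ref{asymptotic} shows $p\,\mathcal H^n(\Sigma_1)\leq \omega_p(\mathcal C(U))\leq p\,\mathcal H^n(\Sigma_1)+\hat C\,p^{1/(n+1)}$, where $\Sigma_1$ is the largest end. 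Consequently the Marques--Neves counting argument, which compares the number of integer multiples of finitely many areas below a sublinear ceiling against the number of widths, simply does not apply, and your concluding paragraph collapses. What replaces it in the paper is a genuinely different mechanism: Lemma \ref{comparison} shows every closed minimal hypersurface in the interior of the core has area strictly greater than $\mathcal H^n(\Sigma_1)$, and the arithmetic Lemma \ref{arithmetic} then shows that any increasing sequence confined to integer multiples of finitely many numbers all $>1$, growing by at least $1$ per step, must eventually exceed $(1+\bar\epsilon_0)p$ — contradicting the upper bound $\omega_p\leq p\,\mathcal H^n(\Sigma_1)+\hat C\,p^{1/(n+1)}$. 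That pair of observations (strict area gap plus equidistribution) is the missing idea; the sublinear bound is not.

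A second, smaller issue: cutting along a maximal pairwise disjoint subcollection of the finitely many minimal hypersurfaces does not guarantee that the new boundary has a contracting neighborhood, since some of those hypersurfaces may be unstable (hence have expanding neighborhoods on both sides). The confinement and barrier/maximum-principle arguments on the cylindrical manifold crucially require the boundary to be stable with a contracting foliation — otherwise the min-max varifold can slip through $\partial N$ into the cylinder or localize on a cylinder slice. The paper's cutting (Lemma \ref{disjoint}) is designed precisely around this: one cuts only along minimal hypersurfaces with contracting or mixed neighborhoods, keeping the contracting side, and cuts maximally in that restricted sense; the resulting core $U$ then has contracting boundary, all interior minimal hypersurfaces have expanding neighborhoods, and any two of them intersect. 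This is a weaker but sufficient Frankel-type property, and it is compatible with the area-comparison Lemma \ref{comparison}. Your scheme, by contrast, would need a separate argument to make the boundary stable before the gluing and confinement steps can be carried out.
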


Actually as a byproduct of our proof, we produce many closed embedded minimal hypersurfaces \textit{locally}: for instance if the metric is bumpy, then any compact manifold (of dimension $3\leq n+1 \leq 7$) with stable boundary contains infinitely many closed minimal hypersurfaces. A more precise formulation is stated in Remark \ref{remarque}.

We note that in the special case of closed hyperbolic $3$-manifolds, the incompressible surfaces found by Kahn and Markovi\'{c} \cite{KahnMarkovic} produce after minimization an infinite number of closed immersed minimal surfaces. Let us finally mention other approaches which have been so far less successful in handling general cases of the conjecture. In \cite{Rubinsteinnotes}, Rubinstein sketched an argument to produce infinitely many minimally immersed surfaces in any hyperbolic $3$-manifold with finite volume. He relies among other things on a construction of certain minimal surfaces of index at most one using Heegaard splittings via the smooth one-parameter min-max theory of Simon and Smith \cite{Smith}, which we justified rigorously in \cite{KetLioSong} jointly with Ketover and Liokumovich. Kapouleas \cite{Kapouleas1} \cite{Kapouleas2} proposed a strategy to construct infinitely many different embedded minimal surfaces in $3$-manifolds with a generic metric by desingularizing two intersecting minimal surfaces or by gluing small necks between two surfaces close to an unstable minimal surface.

\vspace{1em}
\textbf{Remark.}
Since the first version of this article was completed in 2018, several directly related papers appeared. We mention for instance \cite{Zhoumultiplicityone,Antoine19,Li19,CM21,Li20,Song-Zhou20,WangZhichao20,StevensSun21}. We used \cite{Zhoumultiplicityone,GLWZ21,SWZ20} to simplify some of the original arguments.

\subsection*{Outline of proof:}

To simplify the discussion, let us consider a closed manifold $M^{n+1}$ ($2\leq n\leq 6$) endowed with a bumpy metric $g$ and not containing one-sided minimal hypersurfaces. Suppose towards a contradiction that it has only finitely many minimal hypersurfaces. Then by the previous work of Marques and Neves \cite{MaNeinfinity}, there exists at least one stable minimal hypersurface. We start by cutting $M$ along stable minimal hypersurfaces a maximal number of times, keeping one component at each step, to get a connected compact ``core" $U$ with stable minimal boundary. Then one can show that any two minimal hypersurfaces in the interior of $U$ intersect, i.e. the core satisfies the Frankel property. The main thrust of the proof is then to produce by min-max theory closed minimal hypersurfaces confined inside the interior of $U$. The difficulty of this is that while there is a version of min-max theory for manifolds with boundary due to M. Li and X. Zhou \cite{LiZhoufree}, the minimal hypersurfaces obtained have themselves non-empty boundaries in general. We are able to produce closed minimal hypersurfaces by considering the non-compact manifold with cylindrical ends $\mathcal{C}(U)$ obtained by gluing to $U$ the straight cylindrical manifold $\partial U \times [0,\infty)$ along the boundary $\partial U$. Note that the metric on $\mathcal{C}(U)$ may not be smooth. We will prove that min-max theory applied to the non-compact manifold $\mathcal{C}(U)$ associates to each min-max widths $\omega_p=\omega_p(\mathcal{C}(U))$ an integer multiple of one closed connected minimal hypersurface in the interior of the core $U$, whose total $n$-volume is $\omega_p$. The next step is to prove that, in contrast to the sublinear bounds of Gromov-Guth in the compact case \cite{Gromovwaist}\cite{Guth}, the widths $\omega_p$ here behave asymptotically linearly and the leading factor is the $n$-volume of the largest component $\Sigma_1$ of $\partial U$. Since all the closed minimal hypersurfaces inside $U\backslash \partial U$ (which we assumed are in finite number) have their $n$-volume larger than that of $\Sigma_1$, an arithmetic lemma shows that the widths $\omega_p$ eventually become much larger than $p$ times the $n$-volume of $\Sigma_1$, a contradiction. These arguments do not rely on the bumpiness of the metric. In general to obtain the core we will only cut along the stable minimal hypersurfaces which are area minimizing at least on one side.

\subsection*{Acknowledgements:} I am very grateful to my advisor Fernando Cod\'{a} Marques for his constant support, his generosity and inspiring discussions during the course of this work. I also thank him for pointing out references \cite{WhiteMP} and \cite{BrayBrendleNeves}. I would like to thank Andr\'{e} Neves for many valuable conversations.

\section{Min-max theory on a non-compact manifold with cylindrical ends} 
All manifolds considered in this paper
have dimension $n+1$ with $3\leq n+1\leq 7$. The minimal hypersurfaces are smooth embedded. In this section, we construct a certain non-compact manifold with cylindrical ends, and explain how to produce closed minimal hypersurfaces by min-max theory. Their areas satisfy a ``cylindrical Weyl law'', i.e. a linear asymptotic behavior depending on the ends.

Before starting, we point out that previously there were already some results on min-max minimal hypersurfaces in non-compact manifolds. In \cite{GromovPlateauStein}, Gromov constructed minimal hypersurfaces in some special classes of non-compact manifolds. The existence of a closed embedded minimal surface in any finite volume hyperbolic $3$-manifold was proved by Collin-Hauswirth-Mazet-Rosenberg in \cite{ColHauMazRos}. Montezuma showed in \cite{Montezuma1} the existence of one closed minimal hypersurface in manifolds which contain a mean-concave domain and whose ends have controlled geometry. 
In \cite{ChamLiok}, Chambers and Liokumovich produced a complete minimal hypersurface in any finite volume manifolds; see \cite{Antoine19} for a simple alternative proof. Our situation is very different from the previous ones, in that our goal is to localize precisely the min-max minimal hypersurfaces, which should lie inside a given compact domain. In particular, we have to choose a potentially non-smooth metric in order to force them to stay in that domain.

\subsection{Preliminaries in geometric mesure theory}

We recall some definitions about varifolds (see \cite{Simon}) and observe that a version of the usual monotonicity formula holds uniformly for stationary varifolds in metrics close to the Euclidean metric in the $C^1$ norm. As a consequence, we prove an integrality result for certain varifolds which are limits of integral varifolds stationary for different metrics close in the $C^0$ norm and bounded in the $C^1$ topology.

Let $\Omega$ be an open subset of the Euclidean space $\mathbb{R}^{n+1}$. Let $\mathbf{V}_n(\Omega)$ be the set of $n$-varifolds in $\Omega$, i.e. the set of Radon measures on the Grassmannian $\Omega\times\mathbf{G}(n+1,n)$, where $ \mathbf{G}(n+1,n)$ denotes the family of unoriented $n$-dimensional subspaces in $\mathbb{R}^{n+1}$. More generally by abuse of notation, if $\Omega$ is an open $(n+1)$-dimensional manifold, we will still write $\Omega\times\mathbf{G}(n+1,n)$ to denote the Grassmannian bundle of $n$-planes over $\Omega$. In this setting, one defines a varifold similarly. The weight of a varifold $V\in \mathbf{V}_n(\Omega)$ is denoted by $\|V\|$ and its mass is $\mathbf{M}(V)=\|V\|(\Omega)$. If $A$ is a Borel subset of $\Omega$, then $V\llcorner A$ is the restriction of $V$ to $A\times \mathbf{G}(n+1,n)$. The support of $V$, $\spt(V)$, is the smallest relatively closed subset $B\subset \Omega$ such that $V\llcorner(\Omega \backslash B) =0$. We will be mostly interested in rectifiable $n$-varifolds $V$. They are determined by an $n$-dimensional rectifiable set $W$ and a density function $\Theta(.) = \Theta^n(dV,.)$, i.e. a nonnegative function measurable and locally integrable with respect to the $n$-dimensional Hausdorff measure $\mathcal{H}^n$. For Borel set $A\subset \Omega\times\mathbf{G}(n+1,n)$, 
$$V(A):=\int_{\{x\in W; (x,T_xW) \in A\}} \Theta d\mathcal{H}^n(x).$$
If the density function of $V$ takes integer values, then $V$ is said to be integral. Again these definitions extend more generally to an open manifold $\Omega$ endowed with a general metric $g$. For instance let $g_1$, $g_2$ be two smooth metrics on an open domain $\Omega$ of the Euclidean $n$-space, and let $\mathcal{H}^n_1$, $\mathcal{H}^n_{2}$ be the respective Hausdorff measures. If $V$ is a rectifiable varifold in $\Omega$, and $\Theta_1$, $\Theta_2$ are the density functions of $V$ respectively for $g_1$, $g_2$, then 
$$\Theta_1\mathcal{H}^n_1 = \Theta_2\mathcal{H}^n_{2}$$
and if $S$ is an $n$-plane and $\det_S g_i$ is the determinant of the restriction of $g_i$ to $S$ in an orthonormal basis of $S$ for the Euclidean metric, we have at $dV$-a.e. $x\in W=\spt V$:
$$
\Theta_2(x)=\sqrt{\frac{\det_{T_xW}g_1}{\det_{T_xW}g_2}}\Theta_1(x).
$$
A varifold $V$ is called stationary for a metric $\tilde{g}$ (or $\tilde{g}$-stationary) in $\Omega$ when for all smooth vector field $X$ with compact support in $\Omega$, 
$$\delta V(X):= \int_{\Omega \times \mathbf{G}(n+1,n)} \tilde{\divergence}_S X \quad dV(x,S) =0.$$
Here $\tilde{\divergence}_S=\sum_{i=1}^n \langle \tilde{\nabla}_{e_i} X, e_i\rangle$, where $\tilde{\nabla}$ is the Levi-Civita connection of $\tilde{g}$ and $\{e_1,\dots,e_n\}$ is any $\tilde{g}$-orthonormal basis of the $n$-subspace $S$. Note that the notion of rectifiability does not depend on the metric while being integral or stationary depends strongly on the metric.

Let $B_r$ be the Euclidean ball of radius $r$ in $\mathbb{R}^{n+1}$, and let $g_{eucl}$ be the standard Euclidean metric. We start by stating the known fact that the monotonicity formula holds uniformly for stationary varifolds in metrics which are bounded in the $C^1$ topology. We leave the proof to the Appendix.

\begin{lemme} \label{monotonicity}
Let $\eta\in(0,1)$. Consider a metric $\tilde{g}$ on $B_3$ such that 
$$\|\tilde{g}-g_{eucl}\|_{C^1(\Omega)}\leq \eta.$$
Then there exist nonnegative real numbers $\mathfrak{c}=\mathfrak{c}(\eta)$ and $\mathfrak{a}=\mathfrak{a}(\eta)$ such that the following holds. Let ${V}$ be a $\tilde{g}$-stationary $n$-dimensional varifold in $B_3$.
Then for any $\xi\in B_1$, for any $0<\sigma\leq \rho < 1$:
$$\exp({\mathfrak{c}\sigma})\frac{\|V\|(B(\xi,\sigma))}{\sigma^n} \leq (1+\mathfrak{a})\exp({\mathfrak{c}(1+\frak{a})\rho}) \frac{\|V\|(B(\xi,(1+\frak{a})\rho))}{\rho^n},$$
where $B(\xi,r)$ is the Euclidean ball of radius $r$ centered at $\xi$. More generally, for any smooth function $h:B_3\to [0,1]$ and $\xi$, $\sigma$, $\rho$ as above,
\begin{align*}
& \exp({\mathfrak{c}\sigma} ) \frac{1}{\sigma^n} \int_{B(\xi,\sigma)} h\quad d\|V\|  \\ &\leq  
(1+\mathfrak{a})\exp({\mathfrak{c}(1+\frak{a})\rho}) \Big[ \frac{1}{\rho^n} \int_{B(\xi,(1+\frak{a})\rho)} h\quad d\|V\| \\ 
&\quad + \int_\sigma^{(1+\frak{a})\rho} \tau^{-n}\int_{B(\xi,\tau)\times \mathbf{G}(n+1,n)} |\nabla_Sh| \quad dV(x,S) d\tau\Big]
\end{align*}
where $|{\nabla}_Sh|$ is the norm of the gradient of $h$ along $S$ computed with $g_{eucl}$. Moreover, the constants $\mathfrak{c}(\eta)$ and $\mathfrak{a}(\eta)$ converge to $0$ with $\eta$:
$$\lim_{\eta\to 0} \mathfrak{c} = \lim_{\eta\to 0} \mathfrak{a} = 0.$$
\end{lemme}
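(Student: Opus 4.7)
The plan is to imitate the classical derivation of the monotonicity formula for Euclidean stationary varifolds, substituting a radial test field into the $\tilde{g}$-stationarity condition and tracking the perturbation error introduced by replacing $g_{eucl}$ with $\tilde{g}$; the error will be absorbed into the exponential factor $e^{\mathfrak{c}\rho}$, the prefactor $(1+\mathfrak{a})$, and the radius enlargement from $\rho$ to $(1+\mathfrak{a})\rho$. Fix $\xi \in B_1$, write $r(x) = |x-\xi|$ for the Euclidean distance to $\xi$, and consider the test field $X(x) = \phi(r(x))(x-\xi)$ for a smooth cutoff $\phi:[0,\infty)\to[0,1]$. The standard Euclidean identity in any Euclidean-orthonormal basis of an $n$-plane $S$ reads
$$\divergence^{eucl}_S X = n\phi(r) + r\phi'(r) - \phi'(r)\,\frac{|P_{S^\perp}(x-\xi)|^2}{r},$$
where $P_{S^\perp}$ is the Euclidean projection onto $S^\perp$. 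To compare with $\tilde{\divergence}_S X$, write $\tilde{\nabla} = \nabla^{eucl} + \Gamma$ where the Christoffel symbols of $\tilde{g}$ satisfy $|\Gamma| \leq C(n)\,\|\tilde{g}-g_{eucl}\|_{C^1} \leq C(n)\eta$, and note that the change-of-basis matrix between a $\tilde{g}$-orthonormal and a Euclidean-orthonormal basis of $S$ is $\Id + O(\eta)$. Combining these gives
$$\tilde{\divergence}_S X = \divergence^{eucl}_S X + \mathcal{E}(x,S), \qquad |\mathcal{E}(x,S)| \leq C(n)\,\eta\,\bigl(\phi(r) + r|\phi'(r)|\bigr).$$

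Inserting this into $\delta V(X)=0$ and letting $\phi=\phi_\rho$ approximate the characteristic function of $[0,\rho]$, an integration by parts in $\rho$ yields a differential inequality
$$\frac{d}{d\rho}\!\left(\frac{||V||(B(\xi,\rho))}{\rho^n}\right) \geq -\mathfrak{c}(\eta)\,\frac{||V||(B(\xi,\rho))}{\rho^n}$$
with $\mathfrak{c}(\eta) = O(\eta)$; the angular term $|P_{S^\perp}(x-\xi)|^2/r$ has the favorable sign and is discarded. Integrating between $\sigma$ and $\rho$ produces the basic exponential monotonicity with radius $\rho$ on both sides. The extra prefactor $(1+\mathfrak{a})$ and the radius enlargement to $(1+\mathfrak{a})\rho$ absorb the two remaining sources of $O(\eta)$ slop: the discrepancy between $|P_{S^\perp}(x-\xi)|$ measured in $g_{eucl}$ versus $\tilde{g}$, and the density conversion factor $\sqrt{\det_S g_{eucl}/\det_S \tilde{g}} = 1+O(\eta)$ from \eqref{gmt} that must be applied when relating the Euclidean weight used in the test field to the intrinsic $\tilde{g}$-weight $||V||$. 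Taking $\mathfrak{a}(\eta) = O(\eta)$ and enlarging the support of the cutoff $\phi$ to $(1+\mathfrak{a})\rho$ allows one to dominate the Euclidean ball $B(\xi,\rho)$ by the comparable $\tilde{g}$-mass expression.

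For the weighted inequality, replace the test field by $X(x) = \phi(r(x))h(x)(x-\xi)$. The divergence then carries an additional term $\phi(r)\,\langle x-\xi, \nabla_S h\rangle + O(\eta)$, and $|\langle x-\xi, \nabla_S h\rangle| \leq r\,|\nabla_S h|$; after the integration in $\rho$ this produces exactly the gradient integral over $B(\xi,\tau)\times \mathbf{G}(n+1,n)$ on the right-hand side. I expect the main difficulty to be purely computational: verifying uniformly in $\xi$ and $\rho$ that every constant encountered really vanishes as $\eta \to 0$. This requires writing each intermediate identity with the $\eta$-dependence made explicit, in particular expanding $\sqrt{\det_S \tilde{g}}$, the Gram matrix of a $\tilde{g}$-orthonormal basis of $S$ with respect to $g_{eucl}$, and the Christoffel symbols to first order in $\tilde{g}-g_{eucl}$. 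Once this bookkeeping is done, the argument reduces to the classical Euclidean monotonicity formula in the limit $\eta\to 0$.
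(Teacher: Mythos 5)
There is a genuine gap, and it is located exactly where you try to absorb the metric discrepancy at the center $\xi$. You take $r(x)=|x-\xi|$ to be the \emph{Euclidean} distance and compare $\tilde{\divergence}_S X$ with $\divergence^{eucl}_S X$. Since $\tilde{g}(\xi)$ need not equal $g_{eucl}(\xi)$, the change of basis between a $\tilde g$-orthonormal and a Euclidean-orthonormal frame of $S$ is $\Id+O(\eta)$ \emph{uniformly}, with no decay as $x\to\xi$. This puts a term of size $C\eta\,\phi(r)$ into your error $\mathcal{E}$ with \emph{no} extra factor of $r$. After inserting into $\delta V(X)=0$ and integrating, that term contributes $C\eta\,\|V\|(B_\rho)$ rather than $C\eta\,\rho\,\|V\|(B_\rho)$, and the resulting differential inequality is
$$\frac{d}{d\rho}\Big(\frac{\|V\|(B_\rho)}{\rho^n}\Big)\;\geq\;-\,C\eta\,\rho^{-1}\,\frac{\|V\|(B_\rho)}{\rho^n},$$
not the inequality with $\mathfrak{c}(\eta)$ in place of $C\eta\,\rho^{-1}$ that you wrote. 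The extra $\rho^{-1}$ is not integrable at $0$: integrating gives the power correction $\sigma^{-n}\|V\|(B_\sigma)\leq(\rho/\sigma)^{C\eta}\,\rho^{-n}\|V\|(B_\rho)$, and $(\rho/\sigma)^{C\eta}\to\infty$ as $\sigma\to 0$. This cannot be absorbed into a fixed prefactor $(1+\mathfrak{a})$ or a radius enlargement; it is strictly weaker than the lemma (in particular it gives no control on the density at $\xi$). You cannot route this slop into $(1+\mathfrak{a})$ because it enters \emph{inside} the differential inequality, not as a final multiplicative fudge.

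The paper's proof avoids this by a single change that your outline does not make: it freezes the metric at the center, setting $g_\xi:=\tilde g(\xi)$ as the flat reference, takes $r(x):=\|x-\xi\|_{g_\xi}$, and uses the field $X=\gamma(r)\,h\,\tfrac12\tilde\nabla r^2$ computed \emph{entirely} in $\tilde g$-orthonormal bases. Since $\tilde g(\xi)=g_\xi(\xi)$ and $\|\tilde g-g_{eucl}\|_{C^1}\leq\eta$ implies $|\tilde g(x)-g_\xi(x)|\leq\eta\,|x-\xi|$, the discrepancy between $\tilde g$ and the flat reference is $O(\eta r)$, and the leftover in the divergence computation is $C\,r\,\gamma(r)\,h$ (with the crucial factor of $r$), which integrates to the exponential factor $e^{\mathfrak{c}\rho}$. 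The prefactor $(1+\mathfrak{a})$ and the radius enlargement to $(1+\mathfrak{a})\rho$ then come, as you anticipated, only from the final comparison between $g_\xi$-balls and Euclidean balls via $(1-\eta)g_{eucl}\leq g_\xi\leq(1+\eta)g_{eucl}$. With that one change your strategy becomes the paper's proof; without it, the error budget only yields a power-of-radius correction and not the stated lemma.
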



The following proposition, which will be useful in Subsection \ref{confined}, is a consequence of the proof of Allard's compactness theorem for stationary integral varifolds. 

\begin{prop}\label{Allard}
Fix $\eta>0$. Let $g$ and $\{g_k\}_k\geq0$ be metrics on an open domain $\Omega \subset \mathbb{R}^{n+1}$ such that 
\begin{equation} \label{gmt}
 \|g_k\|_{C^1(\Omega)} \leq \eta. 
 \end{equation}
Let $\{V_k\}$ be a sequence of $n$-dimensional varifolds which are integral and stationary with respect to  $g_k$. Suppose that $V_k$ converges to an $n$-dimensional varifold $V$ in the varifold sense, and that $V$ is $g$-stationary. Suppose moreover that 
\begin{equation}\label{hola}
\forall x\in \spt(V), \quad \lim_{k\to\infty} |g_k(x)-g(x)|=0.
\end{equation}
Then $V$ is integral.
\end{prop}

\begin{proof}
Before starting the proof, let us give some intuition as to why the statement is expected. Recall that the classical Allard's compactness theorem \cite{Allardfirstvar} says that given a sequence of $n$-varifolds $V_k$ converging to a stationary varifold $V$ inside a fixed Riemannian manifold $(M,g)$, then $V$ must be an integral varifold provided that 
\begin{itemize}
\item each varifold $V_k$ is $g$-integral,
\item the first variations of $V_k$ with respect to $g$ are uniformly bounded as $k\to \infty$.
\end{itemize}
Proposition \ref{Allard} is similar, since we just replace the two bullets above by
\begin{itemize}
\item each varifold $V_k$ is $g_k$-integral and $g_k$ converges to $g$ on $\spt(V)$ in the $C^0$-norm,
\item each $V_k$ is $g_k$-stationary and $g_k$ have uniformly bounded $C^1$-norm  as $k\to \infty$.
\end{itemize}
\vspace{1em}

The proof of the proposition is based on two easy facts.

\textbf{Fact 1.} Let $x\in \Omega$, $B\subset \Omega$ a Euclidean ball centered at $x$. Let $\eta_{\lambda}$ be the rescaling map of factor $\lambda>0$ centered at $x$: for all $y\in B$, ${\eta}_{\lambda}(y)=\lambda (y-x) +x$. By the $C^1$ bound on the metrics $g_k$, we have:
$$\lim_{\lambda \to\infty} \big(\sup_{B,k}|\nabla \lambda^2\eta_{\lambda^{-1}}^*g_k|\big)=0$$
where the norm and $\nabla$ are computed with the Euclidean metric on $B$. So the metrics $ \lambda^2\eta_{\lambda^{-1}}^*g_k$ become arbitrarily close to a flat one in the $C^1$ topology in $B$, as $\lambda\to \infty$. Moreover if $x\in \spt(V)$, then by (\ref{hola}) $\lambda^2\eta_{\lambda^{-1}}^*g$ and $ \lambda^2\eta_{\lambda^{-1}}^*g_k$ are all close to the same flat metric equal to the constant metric $g_{const} \equiv g(x)$. This enables us to apply Lemma \ref{monotonicity} to the rescaled metrics.

\vspace{1em}
\textbf{Fact 2.} Note that $V_k$ are not necessarily integral for the metric $g$. Nevertheless, by the $C^1$ bound (\ref{gmt}) and our assumption (\ref{hola}), for $dV_k$-a.e. $x\in \spt(V_k)$ at distance less than $d$ from $\spt(V)$, if $\Theta^n(dV_k,x)$ is the density of $V_k$ at $x$ computed with $g$, then there is an integer $D$ with
$$|\Theta^n(dV_k,x)-D|\leq \hat{\epsilon}(d,k) \Theta^n(dV_k,x),$$
where $\hat{\epsilon}$ converges to $0$ as $d\to0$, $k\to \infty$.

\vspace{1em}
We first show that the $g$-stationary limit $V$ is rectifiable. Because of the rectifiability theorem for varifolds with bounded first variation \cite[Theorem 5.5]{Allardfirstvar}\cite[Chapter 8 Theorem 42.4]{Simon}, we only need to check that at any point $x\in\spt(V)$, the density $\Theta^n(dV,x)$ computed with $g$ is strictly positive. Let $r>0$. By varifold convergence of $V_k$ to $V$, by (\ref{gmt}) and (\ref{hola}), for $k$ large enough the $g$-ball $B_g(x,r)$ contains the $g_k$-ball $B_{g_k}(x_k, \alpha r)$ for some sequence $x_k\in \spt(V_k)$ converging to $x$ and some $\alpha\in (0,1)$ independent of $r$ and $k$. By the monotonicity formula Lemma \ref{monotonicity} and Fact 1 applied to the $g_k$-stationary integral varifolds $V_k$, we have $\|V_k\|(B_{g_k}(x_k,\alpha r))\geq cr^n$ for some $c>0$ independent of $r$ and $k$. By varifold convergence again, we get 
$\|V\|(B_{g}(x,2r))\geq {c}r^n$, which is enough to conclude $\Theta^n(dV,x)>0$.


We finally need to verify that $V$ is integral, namely its density function takes integer values. We check that in the proof of integrality in the usual Allard's compactness theorem for integral stationary varifolds \cite[Theorem 6.4]{Allardfirstvar} (see also \cite[Chapter 8 Section 42]{Simon}), the stationarity assumption comes into play only in the form of one of its consequence, the monotonicity formula. 
Here even though the $V_k$ are not stationary for $g$, a version of the monotonicity formula, Lemma \ref{monotonicity}, still holds. Hence, making the necessary modifications in the proof of \cite[Theorem 6.4]{Allardfirstvar} using Fact 1 and remembering Fact 2, we conclude that $V$ is integral.

\end{proof}

\subsection{Construction of a non-compact manifold with cylindrical ends} \label{construction}
Let $(U,g)$ be a connected compact Riemannian manifold with boundary endowed with a smooth metric $g$. Suppose that $\partial U$ is a minimal surface such that a neighborhood of $\partial U$ in $U$ is smoothly foliated by closed leaves whose mean curvature vectors are pointing towards $\partial U$. In other words, we assume that there is a diffeomorphism
$$\Phi : \partial U \times [0,\hat{t}] \to U$$
where $\Phi(\partial U \times \{0\}) = \partial U$ is a minimal hypersurface, and for all $t\in(0,\hat{t}]$, the leaf $\Phi(\partial U \times \{t\})$ has non-zero mean curvature vector pointing towards $\partial U$. By convention, $\Phi(\partial U \times \{t\})$ has positive mean curvature with respect to the normal vector in the direction given by $\frac{\partial}{\partial t}$.

Let $\varphi: \partial U \times \{0\} \to \partial U$ be the canonical identifying map. Define the following non-compact manifold with cylindrical ends:
$$\mathcal{C}(U):= U \cup_\varphi (\partial U \times [0,\infty)).$$
We endow it with the metric $h$ such that $h=g$ on $U$ and $h=g\llcorner{\partial U} \oplus ds^2$. Here $g\llcorner{\partial U}$ is the restriction of $g$ to the tangent bundle of the boundary $\partial U$ and $g\llcorner{\partial U} \oplus ds^2$ is the product metric on $\partial U \times [0,\infty)$. Note that the metric $h$ is only Lipschitz continuous in general. 

Next, we define for any small $\epsilon>0$ a compact Riemannian manifold with boundary $(U_\epsilon,h_\epsilon)$ diffeomorphic to $U$ and converging to $(\mathcal{C}(U),h)$ as $\epsilon\to 0$ in a sense to be defined later. These $U_\epsilon$ are bigger and bigger chopped pieces of $\mathcal{C}(U)$ with a slightly modified smoothed metric $h_\epsilon$. For $0<\epsilon<\hat{t}$, let us first define  
$$\tilde{U}_\epsilon = U \backslash \Phi(\partial U \times [0,\epsilon)).$$
Its boundary is $\partial \tilde{U}_\epsilon= \Phi(\partial U \times \{\epsilon\})$. For a small positive number $\delta_{\epsilon}>0$, the following map 
$$\tilde{\gamma}_\epsilon : \partial \tilde{U}_\epsilon \times [-\delta_{\epsilon},0] \to M$$
$$\tilde{\gamma}_\epsilon(x,t) = \exp({x}, t \nu)$$
is well-defined and gives Fermi coordinates on one side of $\partial \tilde{U}_\epsilon$. By ``coordinates", we mean that $\tilde{\gamma}_\epsilon$ is a diffeomorphism onto its image. Here $\exp$ is the exponential map with respect to $g$, $\nu$ is the inward unit normal of $\partial \tilde{U}_\epsilon$. Note that $\tilde\gamma_\epsilon(\partial \tilde{U}_\epsilon\times \{0\}) = \partial \tilde{U}_\epsilon$ and $\tilde\gamma_\epsilon(\partial \tilde{U}_\epsilon\times [-\delta_{\epsilon},0]) \subset M\backslash\tilde{U}_\epsilon$.
We can moreover suppose $\delta_{\epsilon}>0$ small enough so that 
\begin{itemize}
\item $\lim_{\epsilon\to 0} \delta_\epsilon=0,$
\item $\tilde{\gamma}_\epsilon(\partial \tilde{U}_\epsilon \times [-\delta_{\epsilon},0]) \subset \Phi(\partial U \times [0,\epsilon])$, 
\item and for all $t\in [-\delta_{\epsilon},0]$, the hypersurface $\tilde{\gamma}_\epsilon( \partial \tilde{U}_\epsilon \times\{t\})$ has positive mean curvature. (The normal vector is in the direction of $\nu$.) 
\end{itemize}
For $0 < \epsilon<\hat{t}$, we define
$${U}_\epsilon = \tilde{U}_\epsilon \cup \tilde{\gamma}_\epsilon (\partial \tilde{U}_\epsilon \times [-\delta_{\epsilon},0])$$
and by convention $U_0 = U$.

For a positive number $\hat{d}>0$ small enough and fixed independently of $\epsilon\in[0,\hat{t})$, consider the following Fermi coordinates on a $\hat{d}$-neighborhood of $\partial {U}_\epsilon$:
$$\gamma_\epsilon : \partial {U}_\epsilon \times [-\hat{d},\hat{d}] \to M$$
$$\gamma_\epsilon(x,t) = \exp({x}, t \nu)$$
where $\exp$ is the exponential map with respect to $g$, $\nu$ is the inward unit normal of $\partial {U}_\epsilon$.
We remark for clarity that for all $s\in [0,\delta_\epsilon]$, 
$$\gamma_\epsilon (\partial {U}_\epsilon\times \{s\}) = \tilde\gamma_\epsilon (\partial \tilde{U}_\epsilon\times \{-\delta_\epsilon+s\}).$$ For each small positive $\epsilon$, choose a smooth function $\vartheta_\epsilon:[0,\delta_\epsilon] \to \mathbb{R}$ and $z_\epsilon\in(0,\delta_\epsilon)$ with the following properties:
\begin{itemize}
\item $1\leq \vartheta_\epsilon$ and $\frac{\partial}{\partial t}\vartheta_\epsilon \leq 0$,
\item $\vartheta_\epsilon\equiv1$ in a neighborhood of $\delta_\epsilon$,
\item $\vartheta_\epsilon$ is constant on $[0,z_\epsilon]$,
\item $\lim_{\epsilon\to 0}\int_{[0,\delta_\epsilon]}\vartheta_\epsilon =\infty$,
\item $\lim_{\epsilon\to 0}\int_{[z_\epsilon,\delta_\epsilon]}\vartheta_\epsilon =0$.
\end{itemize}
This function naturally induces a function on $U_\epsilon$ still called $\vartheta_\epsilon$, defined by
$$\vartheta_\epsilon(\gamma_\epsilon(x,t)) = \vartheta_\epsilon(t) \text{ for all } (x,t)\in \partial U_\epsilon \times [0,\delta_\epsilon]$$
and extended continuously by $1$.
The original metric $g$ can be written in the Fermi coordinates $\gamma_\epsilon$ as $g_t\oplus dt^2$. Now define on $U_\epsilon$ the following smooth metric $h_\epsilon$:
$$h_\epsilon(q)= \left\{ \begin{array}{rcl}
g_t(q) \oplus (\vartheta_\epsilon (q) dt)^2 & \text{ for $q\in \gamma_\epsilon(\partial U_\epsilon \times [0,\delta_\epsilon])$} \\
g(q) & \text{ for $q\in U_\epsilon\backslash \gamma_\epsilon(\partial U_\epsilon \times [0,\delta_\epsilon])$} 
\end{array}\right..$$ 
We just defined a compact manifold with boundary $(U_\epsilon, h_\epsilon)$. Let us state some useful properties of $h_\epsilon$. A first lemma controls the extrinsic curvature of the slices $\gamma_\epsilon(\partial U_\epsilon \times \{t\})$ for the new metric $h_\epsilon$.

\begin{lemme} \label{meancurv}
Let $(U_\epsilon,h_\epsilon)$ be defined as above. By abuse of notations, consider $\partial U_\epsilon \times [0,\delta_{\epsilon}]$ as a subset of $U_\epsilon$ via $\gamma_\epsilon$. Then for $t\in [0,\delta_{\epsilon}]$, the slices $\partial U_\epsilon \times \{t\}$ satisfy the following with respect to the new metric $h_\epsilon$:
\begin{enumerate}
\item they have non-zero mean curvature vector pointing in the direction of $-\frac{\partial}{\partial t}$,
\item their mean curvature goes uniformly to $0$ as $\epsilon$ converges to $0$,
\item their second fundamental form is bounded by a constant $C$ independent of $\epsilon$.
\end{enumerate}
\end{lemme}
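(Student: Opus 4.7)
The plan is to reduce $h_\epsilon$ to standard Fermi form by a radial reparametrization, and then read off the second fundamental form of the slices by a two-line calculation.

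\textbf{Step 1: Reduction to standard Fermi form.} On the collar $\partial U_\epsilon \times [0,\delta_\epsilon]$ where the metric is modified, introduce the new radial variable
\[ s(t) = \int_0^t \vartheta_\epsilon(\tau)\, d\tau. \]
Since $\vartheta_\epsilon \geq 1$ is smooth, this is a diffeomorphism $[0,\delta_\epsilon] \to [0,L_\epsilon]$, and in the coordinates $(x,s)$ the modified metric becomes
\[ h_\epsilon = g_{t(s)} \oplus ds^2, \]
i.e., standard Fermi form. The slice $\{t=t_0\}$ coincides with $\{s=s(t_0)\}$; its induced metric equals $g_{t_0}$ for both $g$ and $h_\epsilon$; and the $h_\epsilon$-unit normal is $\partial_s = \vartheta_\epsilon(t)^{-1}\partial_t$.

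\textbf{Step 2: The second fundamental form and proofs of (1) and (3).} For a metric in Fermi form $g_{t(s)} \oplus ds^2$, the second fundamental form of $\{s=s_0\}$ is $\tfrac12\partial_s g_{t(s)}|_{s_0}$. By the chain rule this equals $\tfrac{1}{2\vartheta_\epsilon(t_0)}\partial_t g_t|_{t_0}$, so
\[ A^{h_\epsilon} = \vartheta_\epsilon(t)^{-1}\, A^g, \qquad H^{h_\epsilon} = \vartheta_\epsilon(t)^{-1}\, H^g, \]
where $A^g$ and $H^g$ are the second fundamental form and mean curvature of the same slice in the original metric $g$. Since $\vartheta_\epsilon > 0$, the mean curvature vector for $h_\epsilon$ keeps the same direction as for $g$, which by the third bullet in the choice of $\delta_\epsilon$ points in the direction of $-\partial_t$; this gives (1). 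Since $\vartheta_\epsilon \geq 1$, we also have $|A^{h_\epsilon}| \leq |A^g|$. For small $\epsilon$, $\partial U_\epsilon = \Phi(\partial U \times \{\epsilon\})$ varies smoothly, so the Fermi tube $\gamma_\epsilon(\partial U_\epsilon \times [-\hat d,\hat d])$ and all of its geometric quantities are uniformly controlled in $\epsilon \in [0,\epsilon_0]$; in particular $|A^g|$ is uniformly bounded there, proving (3).

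\textbf{Step 3: Proof of (2).} Since $\partial U$ is $g$-minimal and the foliation $\Phi$ is smooth, the $g$-mean curvature of $\partial U_\epsilon$ is $O(\epsilon)$. Because $|A^g|$ is uniformly bounded on the Fermi tube and $\delta_\epsilon \to 0$, the $g$-mean curvature of each Fermi slice $\gamma_\epsilon(\partial U_\epsilon \times \{t\})$ with $t \in [0,\delta_\epsilon]$ differs from that of $\partial U_\epsilon$ by $O(\delta_\epsilon)$, so
\[ \sup_{t \in [0,\delta_\epsilon]} |H^g| \longrightarrow 0 \quad \text{as } \epsilon \to 0. \]
Combined with $|H^{h_\epsilon}| = |H^g|/\vartheta_\epsilon \leq |H^g|$ from Step 2, this gives (2).

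\textbf{Main obstacle.} The computation is essentially a one-line consequence of Step 1, so there is no serious analytic difficulty. The only conceptual care required is (a) checking that the radial reparametrization indeed puts $h_\epsilon$ in standard Fermi form, which instantly identifies the second fundamental form as $\vartheta_\epsilon^{-1} A^g$; (b) tracking the sign conventions so that (1) matches the author's convention; and (c) noticing that the smallness in (2) must come from the smallness of $H^g$ itself on the collar, since near $t = \delta_\epsilon$ one has $\vartheta_\epsilon \equiv 1$ and the factor $1/\vartheta_\epsilon$ gives nothing.
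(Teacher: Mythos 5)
Your proof is correct and follows essentially the same route as the paper: the key identity $\mathbf{A}_{h_\epsilon} = \vartheta_\epsilon^{-1}\mathbf{A}_g$ (which you derive cleanly by the radial reparametrization putting $h_\epsilon$ in standard Fermi form), together with $\vartheta_\epsilon\geq 1$, the uniform bound on the Fermi tube, $\delta_\epsilon\to 0$, and the $g$-minimality of $\partial U$. The paper simply states the identity $\mathbf{A}_{h_\epsilon}(v,v)=\vartheta_\epsilon^{-1}\mathbf{A}_g(v,v)$ and $\partial_s=\vartheta_\epsilon^{-1}\partial_t$ as observations without the reparametrization step, but the substance of the argument is identical.
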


\begin{proof} Choose $\frac{\partial}{\partial s}$ as the unit normal of the slices $\partial U_\epsilon \times \{t\}$ with respect to $h_\epsilon$, pointing in the direction of $\frac{\partial}{\partial t}$. Let $\mathbf{A}_{h_\epsilon}$ (resp. $\mathbf{A}_g$) be the second fundamental form of a slice with respect to $h_\epsilon$ (resp. $g$). Recall that $\partial U$ is a minimal hypersurface for $g$. Then the three items are proved readily by the fact that $\lim_{\epsilon\to 0}\delta_\epsilon =0$ and by observing that 
$$\frac{\partial}{\partial s} = \vartheta_\epsilon^{-1}\frac{\partial}{\partial t} \text{ and } \mathbf{A}_{h_\epsilon}(v,v) = \vartheta_\epsilon^{-1}\mathbf{A}_g(v,v)$$
for any $v$ belonging to the tangent  space of a slice $\partial U_\epsilon \times \{t\}$.

\end{proof}

\begin{remarque}\label{positivefoliation}
For later use, we point out that, as a consequence of the previous lemma, the union of $\Phi(\partial U \times \{t\})$ for $t\in[\epsilon,\hat{t}]$ and ${\gamma}_\epsilon( \partial {U}_\epsilon \times\{t\})$ for $t\in [0,\delta_{\epsilon}]$ form a continuous foliation of a neighborhood of $\partial U_\epsilon$ starting from $\Phi(\partial U \times \{\hat{t}\})$; each leaf is smooth and has a mean curvature vector which is non-zero and pointing towards $\partial U_\epsilon$ with respect to the metric $h_\epsilon$.

\end{remarque}

The next two lemmas give information on the asymptotic behavior of the metrics $h_\epsilon$. The following Lemma \ref{geomconv} shows that $(U_\epsilon,h_\epsilon)$ converges to the non-compact manifold with cylindrical ends $\mathcal{C}(U)$ and the convergence is smooth outside of the folding region where the curvature can be unbounded. In Lemma \ref{norm} we will describe how the folding region of $(U_\epsilon,h_\epsilon)$ is controlled in the $C^1$ topology by $g$.

Recall that a metric $g$ on a manifold $\Omega$ naturally determines a $C^k$ norm for smooth tensors on $\Omega$ and any nonnegative integer $k$. The notion of geometric convergence is defined for instance in \cite[Chapter 5, Section 1]{MorganTian}. We will say that a sequence $(U_k,g_k,x_k)$ converges geometrically to $(U_\infty,g_\infty, x_\infty)$ in the $C^k$ topology if $(U_k,g_k,x_k)$, $(U_\infty,g_\infty, x_\infty)$ satisfy the conditions in Definition 5.3 of \cite{MorganTian}, with the $C^\infty$-convergence on compact subsets replaced by $C^k$-convergence on compact subsets.

\begin{lemme} \label{geomconv}
The sequence of Riemannian manifolds $(U_\epsilon,h_\epsilon,q)$ converges geometrically to the non-compact manifold $(\mathcal{C}(U),h,q)$ in the $C^0$ topology. Moreover the geometric convergence is smooth outside of $\partial U\subset \mathcal{C}(U)$ in the following sense.
\begin{enumerate}
\item Let $q\in U\backslash \partial U$. For small $\epsilon$, we have $q\in U_\epsilon\backslash\gamma_\epsilon(\partial U_\epsilon\times [0,\delta_\epsilon))$. Then as $\epsilon\to 0$,
$$(U_\epsilon\backslash\gamma_\epsilon(\partial U_\epsilon\times [0,\delta_\epsilon]), h_\epsilon,q)$$ converges geometrically to $(U\backslash \partial U,g,q)$ in the $C^\infty$ topology.
\item Fix any connected component $C$ of $\partial U$; for $\epsilon$ small we can choose a component $C_\epsilon$ of $\partial U_\epsilon$ so that $C_\epsilon$ converges to $C$ as $\epsilon\to 0$ and the following holds. Let $\epsilon_k>0$ be a sequence converging to $0$. For all $k$ let $q_k \in \gamma_\epsilon(C_{\epsilon_k}\times [0,\delta_{\epsilon_k}))$ be a point at fixed distance $d'>0$ from $\gamma_\epsilon(C_{\epsilon_k}\times \{\delta_{\epsilon_k}\})$ for the metric $h_{\epsilon_k}$, $d'$ being independent of $k$. Then 
$$\big(\gamma_{\epsilon_k}(C_{\epsilon_k}\times [0,\delta_{\epsilon_k})),h_{\epsilon_k},q_k\big)$$
subsequently converges geometrically to $(C\times (-\infty,0),g_{prod}, q_\infty)$ in the $C^\infty$ topology, where $g_{prod}$ is the product of the restriction of $g$ to $C$ and the standard metric on $(-\infty,0)$, and $q_\infty$ is a point of $C\times (-\infty,0)$ at distance $d'$ from $C\times \{0\}$.

\end{enumerate}

\end{lemme}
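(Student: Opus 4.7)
My plan is to exploit the explicit form of $h_\epsilon$ in Fermi coordinates, namely $h_\epsilon = g_t \oplus \vartheta_\epsilon^2\, dt^2$ on the slab $\gamma_\epsilon(\partial U_\epsilon \times [0,\delta_\epsilon])$ and $h_\epsilon = g$ outside, together with the defining conditions on $\vartheta_\epsilon$. The key move is to pass to the $h_\epsilon$-arclength in the transverse direction: setting $s(t) = \int_t^{\delta_\epsilon} \vartheta_\epsilon(\tau)\,d\tau$ makes the slab isometric to $(\partial U_\epsilon \times [0,L_\epsilon],\ g_{t(s)} \oplus ds^2)$ with $L_\epsilon = \int_0^{\delta_\epsilon}\vartheta_\epsilon \to \infty$. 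The hypothesis $\int_{z_\epsilon}^{\delta_\epsilon}\vartheta_\epsilon \to 0$ says the transition region $t\in[z_\epsilon,\delta_\epsilon]$ collapses to an arbitrarily small $s$-interval $[0,\beta_\epsilon]$ with $\beta_\epsilon\to 0$; on the bulk $t\in[0,z_\epsilon]$ the function $\vartheta_\epsilon$ takes a constant value $V_\epsilon$, and from $V_\epsilon z_\epsilon \to \infty$ together with $z_\epsilon\leq \delta_\epsilon\to 0$ one gets $V_\epsilon\to\infty$.

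Item (1) is then essentially tautological: on $U_\epsilon\setminus \gamma_\epsilon(\partial U_\epsilon \times [0,\delta_\epsilon])$ the metric $h_\epsilon$ equals $g$, and since $\epsilon,\delta_\epsilon\to 0$ these open subsets cover any compact $K\subset U\setminus\partial U$ for $\epsilon$ small enough via the canonical inclusion into $M$. This yields the claimed pointed $C^\infty$ geometric convergence to $(U\setminus\partial U, g, q)$.

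For item (2), I would fix a component $C\subset\partial U$ and the nearby component $C_\epsilon\subset\partial U_\epsilon=\Phi(\partial U\times\{\epsilon\})$, which converges smoothly to $C$ through $\Phi$. In the $s$-coordinate above, the basepoint $q_k$ has $s$-height equal to its $h_{\epsilon_k}$-distance $\hat d$ from the inner face $\gamma_{\epsilon_k}(C_{\epsilon_k}\times\{\delta_{\epsilon_k}\})$. For any fixed $S>0$, the slice $\{s\in[\beta_\epsilon,S]\}$ sits inside the constant region $\{t\in[0,z_\epsilon]\}$ for $\epsilon$ small; there $t(s) = z_\epsilon-(s-\beta_\epsilon)/V_\epsilon\to 0$ uniformly on $[\beta_\epsilon,S]$, and for every $k\geq 1$
\[
\partial_s^k g_{t(s)} = (-V_\epsilon)^{-k}\,(\partial_t^k g_t)|_{t=t(s)} \longrightarrow 0,
\]
because $g_t$ is smooth in $t$ with bounded derivatives near $t=0$ and $V_\epsilon\to\infty$. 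The horizontal derivatives of $g_{t(s)}$ along $C_\epsilon$ converge to those of $g\llcorner C$ by smoothness of $\Phi$ and of the Fermi foliation. Thus $h_\epsilon \to g\llcorner C \oplus ds^2$ in $C^\infty$ on compact subsets of $C\times(0,\infty)$, which is exactly the claimed pointed $C^\infty$ convergence to $(C\times(0,\infty), g_{prod}, q_\infty)$.

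Finally, the $C^0$ pointed convergence $(U_\epsilon, h_\epsilon, q)\to(\mathcal{C}(U), h, q)$ follows by gluing: on any compact $K\subset\mathcal{C}(U)=U\cup_\varphi(\partial U\times[0,\infty))$, define $\psi_\epsilon:K\to U_\epsilon$ as the inclusion on $K\cap U$ and the $s$-parametrization on $K\cap(\partial U\times[0,\infty))$, with $\partial U\times\{0\}$ identified with the inner face $\gamma_\epsilon(\partial U_\epsilon\times\{\delta_\epsilon\})$. Since that inner face lies at $g$-distance $O(\epsilon+\delta_\epsilon)\to 0$ from $\partial U$ with induced geometry controlled by Lemma \ref{meancurv}, the two descriptions glue across the interface up to a $C^0$-small error vanishing with $\epsilon$, and the pullback metrics $\psi_\epsilon^* h_\epsilon$ converge to $h$ in $C^0$. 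The only mildly delicate point is this matching at the interface (where the limiting metric $h$ fails to be $C^1$); everything else reduces to the change of variable and the asymptotics of $\vartheta_\epsilon$.
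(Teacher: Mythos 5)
Your proof is essentially the argument the paper had in mind, and in fact you supply the details that the paper's one-line proof (``These properties readily follow from the construction of $h_\epsilon$'') omits. The change of variable $s(t)=\int_t^{\delta_\epsilon}\vartheta_\epsilon$, which turns the slab into a product $g_{t(s)}\oplus ds^2$ of growing length $L_\epsilon\to\infty$, is exactly the right way to read off items (1) and (2) from the hypotheses on $\vartheta_\epsilon$; the computation $\partial_s^k g_{t(s)}=(-V_\epsilon)^{-k}(\partial_t^k g_t)|_{t(s)}\to 0$ together with the smooth dependence of the Fermi metric on $t$ and on $\epsilon$ gives the pointed $C^\infty$ convergence to the product metric, and the fact that on the metric $g_{t}\oplus ds^2$ the $s$-coordinate is literally the distance to the inner face justifies the placement of the basepoints $q_k$ and $q_\infty$. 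Item (1) is, as you say, immediate since $h_\epsilon=g$ there.

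The one place you should tighten is the final $C^0$ gluing. As written, ``the inclusion on $K\cap U$'' does not actually map into $U_\epsilon$: points of $K\cap U$ lying in $\Phi(\partial U\times[0,\epsilon))$ are not in $U_\epsilon$, and the inclusion and the $s$-parametrization do not literally agree at the interface $\partial U$ (the inclusion wants to send $\partial U$ to $\partial U$, while the $s$-chart sends $\partial U\times\{0\}$ to the inner face $\gamma_\epsilon(\partial U_\epsilon\times\{\delta_\epsilon\})$, which sits at $g$-distance $\approx\epsilon+\delta_\epsilon$ inside $U_\epsilon$). What is needed is an $\epsilon$-family of embeddings of $K$ into $U_\epsilon$ that equals the identity away from $\partial U$ and flows points near $\partial U$ inward to match the $s$-chart at the inner face; the charts $\theta_\epsilon$ of Lemma~\ref{norm} provide exactly such a collar identification, and the error in $\psi_\epsilon^*h_\epsilon-h$ is then $O(\epsilon+\delta_\epsilon)$ in $C^0$. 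This is a genuine but purely bookkeeping fix, and you have correctly identified it as the only delicate point.
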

\begin{proof}
These properties readily follow from the construction of $h_\epsilon$.
\end{proof}

We describe more precisely the folding region in $U_\epsilon$ by finding explicit local charts where the metrics $h_\epsilon$ converge in the $C^0$ topology while remaining bounded in the $C^1$ topology. The sign $\llcorner$ stands for the restriction of a metric $g$ to a submanifold.

\begin{lemme} \label{norm}
For any $\frak{d}_1\in(0,\hat{d})$, there exists $\eta>0$ 
such that for all $\epsilon>0$ small, there is an embbeding $\theta_\epsilon: \partial U\times [-\frak{d}_1,\frak{d}_1]\to U_\epsilon$ satisfying the following properties: 
\begin{enumerate}
\item $\theta_\epsilon(\partial  U\times \{0\}) = \gamma_\epsilon(\partial U_\epsilon\times\{\delta_\epsilon\})$ and 
$$\theta_\epsilon(\partial U\times [-\frak{d}_1,\frak{d}_1]) = \{q\in U_\epsilon; \quad d_{h_\epsilon}\big(q,\gamma_\epsilon(\partial U_\epsilon\times\{\delta_\epsilon\})\big)\leq \frak{d}_1\},$$
\item  $\|\theta_\epsilon^*h_\epsilon \|_{C^1(\partial U\times [-\frak{d}_1,\frak{d}_1])} \leq \eta$ where $\|.\|_{C^1(\partial U\times [-\frak{d}_1,\frak{d}_1])}$ is computed with the product metric $h':=g\llcorner \partial U  \oplus ds^2$,
\item the metrics $\theta_\epsilon^*h_\epsilon$ converge in the $C^0$ topology to a Lipschitz continuous metric which is smooth outside of $\partial U\times\{0\}\subset\partial U\times [-\frak{d}_1,\frak{d}_1]$ and 
$$\lim_{\epsilon \to 0}\|\theta_\epsilon^*h_\epsilon\llcorner (\partial U\times [0,\frak{d}_1]) - \gamma_0^*g\llcorner (\partial U\times [0,\frak{d}_1]) \|_{C^0({\partial U\times [0,\frak{d}_1]})}=0$$
where $\|.\|_{C^0(\partial U\times [0,\frak{d}_1])}$ is computed with $h'$.
\end{enumerate}

\end{lemme}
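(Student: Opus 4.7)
The plan is to define $\theta_\epsilon$ as the $h_\epsilon$-normal exponential map at the inner face $F_\epsilon:=\gamma_\epsilon(\partial U_\epsilon\times\{\delta_\epsilon\})$, composed with the identification $\phi_\epsilon:\partial U\to\partial U_\epsilon$, $\phi_\epsilon(x)=\Phi(x,\epsilon)$. Since $\vartheta_\epsilon\equiv 1$ near $t=\delta_\epsilon$, $h_\epsilon$ coincides with $g$ in a neighborhood of $F_\epsilon$, so the inner face is a smooth hypersurface for a smooth metric there. Let $L_\epsilon=\int_0^{\delta_\epsilon}\vartheta_\epsilon(u)\,du\to\infty$ and let $\tau_\epsilon:[0,L_\epsilon]\to[0,\delta_\epsilon]$ be the inverse of $t\mapsto\int_t^{\delta_\epsilon}\vartheta_\epsilon(u)\,du$. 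Set
\[
\theta_\epsilon(x,s)=\begin{cases}
\gamma_\epsilon(\phi_\epsilon(x),\delta_\epsilon+s), & 0\leq s\leq\mathfrak{d}_1,\\
\gamma_\epsilon(\phi_\epsilon(x),\tau_\epsilon(-s)), & -\mathfrak{d}_1\leq s\leq 0.
\end{cases}
\]
For $\mathfrak{d}_1<\hat{d}$ and $\epsilon$ small (so $L_\epsilon>\mathfrak{d}_1$ and $\delta_\epsilon+\mathfrak{d}_1<\hat{d}$), both branches are well-defined and coincide smoothly in a neighborhood of $s=0$ (using $\vartheta_\epsilon\equiv 1$ near $\delta_\epsilon$). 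Hence $\theta_\epsilon$ is a smooth embedding.

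For (1), the $s$-curves of $\theta_\epsilon$ are $h_\epsilon$-geodesics normal to the slices $\theta_\epsilon(\partial U\times\{s\})$, parametrized by arc length: on the positive side this is classical Fermi geometry for $g$, and on the negative side the warped form $g_t\oplus(\vartheta_\epsilon dt)^2$ becomes the Fermi form $g_{t_\epsilon(s)}\oplus ds^2$ after substituting $s=\int_t^{\delta_\epsilon}\vartheta_\epsilon$. Hence $d_{h_\epsilon}(\theta_\epsilon(x,s),F_\epsilon)\leq|s|$, so the image is contained in the $\mathfrak{d}_1$-tubular neighborhood. Conversely, given $q$ with $d_{h_\epsilon}(q,F_\epsilon)\leq\mathfrak{d}_1$, a minimizing $h_\epsilon$-geodesic from $q$ to its nearest point $p\in F_\epsilon$ is perpendicular at $p$ by first variation, and since $L_\epsilon>\mathfrak{d}_1$ it cannot exit the folding region through $\partial U_\epsilon$ before reaching $q$; it therefore stays on a single side and coincides with an $s$-curve of $\theta_\epsilon$, placing $q$ in the image.

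For (2), pulling back using the block structure of $h_\epsilon$ yields
\[
\theta_\epsilon^* h_\epsilon(x,s) = \phi_\epsilon^*\bigl(g_{t_\epsilon(s)}\bigr)(x)\oplus ds^2,
\]
with $t_\epsilon(s)=\delta_\epsilon+s$ for $s\geq 0$ and $t_\epsilon(s)=\tau_\epsilon(-s)$ for $s\leq 0$. Since $|t_\epsilon'(s)|\leq 1$ (it equals $1$ or $1/\vartheta_\epsilon(\tau_\epsilon(-s))\in(0,1]$) and $g$ is smooth near $\partial U$, all $h'$-components and their first $h'$-derivatives are bounded by a constant $\eta$ depending only on $g$ and $\mathfrak{d}_1$. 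For (3), as $\epsilon\to 0$: on the positive side, $\delta_\epsilon\to 0$, $\partial U_\epsilon$ converges smoothly to $\partial U$ via $\phi_\epsilon$, and the Fermi charts $\gamma_\epsilon$ converge to $\gamma_0$, so $\theta_\epsilon^* h_\epsilon\to g_s|_{\partial U}\oplus ds^2=\gamma_0^* g$ in $C^0$. On the negative side, the constraints $\int_0^{\delta_\epsilon}\vartheta_\epsilon\to\infty$ and $\int_{z_\epsilon}^{\delta_\epsilon}\vartheta_\epsilon\to 0$ together with $z_\epsilon\leq\delta_\epsilon\to 0$ force the plateau height $V_\epsilon:=\vartheta_\epsilon|_{[0,z_\epsilon]}\to\infty$, whence $\tau_\epsilon(-s)\to 0$ for each fixed $s\in[-\mathfrak{d}_1,0]$, and $\theta_\epsilon^* h_\epsilon\to g|_{\partial U}\oplus ds^2$ in $C^0$. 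The two one-sided limits agree at $s=0$, producing a continuous Lipschitz limit that is smooth off $\partial U\times\{0\}$; the uniform $C^1$ bound from (2) upgrades pointwise convergence to uniform $C^0$ convergence. The main technical step is verifying that the minimizing geodesic in (1) does not escape the folding region, which follows from $L_\epsilon>\mathfrak{d}_1$.
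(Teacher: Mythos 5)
Your construction is the same as the paper's: you take $\theta_\epsilon$ to be the $h_\epsilon$-normal exponential map at the inner face $\gamma_\epsilon(\partial U_\epsilon\times\{\delta_\epsilon\})$, composed with an identification $\phi_\epsilon:\partial U\to\partial U_\epsilon$, which is exactly the map $\theta_\epsilon(x,s)=\exp^\epsilon(\gamma_\epsilon(\phi_\epsilon(x),\delta_\epsilon),s\nu^\epsilon)$ in the paper. Your explicit reparametrization via $\tau_\epsilon$ (reducing the warped block $g_t\oplus(\vartheta_\epsilon dt)^2$ to Fermi form $g_{t_\epsilon(s)}\oplus ds^2$) simply spells out the computations behind the paper's terse one-line verifications of (1)--(3): the tubular-neighborhood identification for (1), the $C^1$ bound from $\vartheta_\epsilon\geq 1$ (equivalently $|t_\epsilon'|\leq 1$) for (2), and the $C^0$ limits from $\delta_\epsilon\to 0$, $\phi_\epsilon^*(g\llcorner\partial U_\epsilon)\to g\llcorner\partial U$, and $\tau_\epsilon(-s)\to 0$ for (3); the reasoning is correct.
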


\begin{proof}
For $\epsilon>0$ small, it is possible to choose diffeomorphisms 
$$\phi_\epsilon : \partial U \to \partial U_\epsilon$$ such that 
$$\lim_{\epsilon\to 0}\phi_\epsilon^*(g\llcorner\partial U_\epsilon) =g\llcorner \partial U.$$
Let $\exp^\epsilon$ be the exponential map for the metric $h_\epsilon$. Then we define 
$$\theta_\epsilon:\partial U\times [-\frak{d}_1,\frak{d}_1]\to U_\epsilon$$
$$\theta_\epsilon(x,s) := \exp^\epsilon(\gamma_\epsilon(\phi_\epsilon(x),\delta_\epsilon),s\nu^\epsilon),$$
where $\nu^\epsilon$ is the unit normal of $\gamma_\epsilon(\partial U_\epsilon\times\{\delta_\epsilon\})$ for $h_\epsilon$ pointing away from $\partial U_\epsilon$. Note that $\nu^\epsilon$ is also a unit vector for $g$ since $h_\epsilon$ and $g$ coincide on $\gamma_\epsilon(\partial U_\epsilon\times\{\delta_\epsilon\})$. For $\frak{d}_1$ smaller than say, $\hat{d}/2$, this map is an embedding for all $\epsilon>0$ small.

Let us check that $\theta_\epsilon$ satisfies the conclusions of the lemma. The first item follows from the definition of $\theta_\epsilon$ in terms of $\exp^\epsilon$, the second item comes from the fact that the function $\vartheta_\epsilon$ appearing in the construction of $h_\epsilon$ is at least $1$. Finally the last bullet follows from the choice of $\phi_\epsilon$ and the fact that $g$, $h_\epsilon$ coincide on $U_\epsilon\backslash \big(\gamma_\epsilon(\partial U_\epsilon\times[0,\delta_\epsilon])\big)$

\end{proof}

\subsection{Definition of the min-max widths and the cylindrical Weyl law}
Let $(M,g)$ be a connected compact Riemannian manifold with boundary. The space of $k$-dimensional rectifiable mod $2$ flat chains with rectifiable boundary in a manifold $(M,g)$ is denoted by $\mathbf{I}_{k}(M;\mathbb{Z}_2)$, and $\mathbf{M}$ stands for the mass of a mod $2$ flat chain. We denote by $\mathcal{Z}_n(M,\partial M;\mathbb{Z}_2)$ the space of $T\in \mathbf{I}_{n}(M;\mathbb{Z}_2)$ in $(M,g)$ with $T=\partial U+T_1$ for some $(n+1)$-dimensional mod $2$ flat chain $U$ in $M$ and some $n$-dimensional mod $2$ flat chain $T_1$ with support in $\partial M$. 
As in \cite[Definition 1.20]{Alm1} (see also \cite[2.2]{LioMaNe}), we define from $\mathcal{Z}_n(M,\partial M;\mathbb{Z}_2)$ the space of relative cycles $\mathcal{Z}_{n,rel}(M,\partial M;\mathbb{Z}_2)$. The latter coincides with $\mathcal{Z}_{n}(M;\mathbb{Z}_2)$ when the boundary $\partial M$ is empty. The space of relative cycles is endowed with the flat topology and is weakly homotopically equivalent to $\mathbb{RP}^\infty$ (see \cite{Alm1} and Section 5 of \cite{MaNemultiplicityone} for the case $\partial M=\varnothing$). We denote by $\overline{\lambda}$ the generator of $H^1(\mathcal{Z}_{n,rel}(M,\partial M;\mathbb{Z}_2), \mathbb{Z}_2)=\mathbb{Z}_2$.

Let $X$ be a finite dimensional simplicial complex. A map $\Phi:X\rightarrow \mathcal{Z}_{n,rel}(M,\partial M;\mathbb{Z}_2)$ continuous in the flat topology is called a {\em  $p$-sweepout} if
$$
\Phi^*(\bar \lambda^p) \neq 0 \in H^p(X;\mathbb{Z}_2).
$$
By \cite[Theorem 2.10, Theorem 2.11]{LioMaNe} (based on \cite{MaNeWillmore}), it will make no difference if we restrict ourselves to maps $\Phi:X\rightarrow \mathcal{Z}_{n,rel}(M,\partial M;\mathbb{Z}_2)$ which are continuous in the much stronger mass topology.
We say 
that a $p$-sweepout $\Phi:X\rightarrow \mathcal{Z}_{n,rel}(M,\partial M;\mathbb{Z}_2)$ has {\em no concentration} of mass when 
$$\lim_{r\to 0} \sup\{\mathbf{M}(\Phi(x) \cap B_r(p)):x\in X, p\in M\}=0.$$
The set of all $p$-sweepouts $\Phi$ that have no concentration of mass is denoted by $\mathcal P_p=\mathcal P_p(M,g)$. Note that two maps in  $\mathcal P_p$ can have different domains.

In \cite{MaNeinfinity} and then \cite{LioMaNe}, the widths of $M$ were defined as
$$\omega_p(M,g)=\inf_{\Phi \in \mathcal P_p}\sup\{M(\Phi(x)): x\in {\rm dmn}(\Phi)\},$$
where ${\rm dmn}(\Phi)$ is the domain of $\Phi$ (see \cite{Gromovnonlinearspectra} and \cite{Guth} for previous works on the subject). As explained in \cite[Lemma 4.7]{MaNeinfinity} (at least for the case $\partial M = \varnothing$, but the following is true in general), the $p$-width can be expressed as the infimum of the widths of homotopy classes of discrete $p$-sweepouts (see \cite{MaNeinfinity} for definitions and notations):
\begin{equation} \label{detail}
\omega_p(M,g) = \inf_{\Pi\in \mathcal{D}_p} \mathbf{L}(\Pi).
\end{equation}
A rather technical point in the theory is that usually one proves the existence of smooth minimal hypersurfaces and Morse index bounds with Almgren-Pitts's theory by working with a fixed class of discrete sweepouts $\Pi$. However it is not clear that the infimum in (\ref{detail}) is realized by one particular $\Pi$.
Nevertheless, $\omega_p(M,g)$ is indeed achieved by an integral varifold whose support is a smooth minimal hypersurface of index bounded by $p$.
For instance when $\partial M=\varnothing$, one first reduces  to the use of discrete sweepouts with $k$-dimensional domains $X$, then one applies the index bound of \cite{MaNeindexbound} together with the compactness result of \cite{Sharp} (see \cite[Proposition 2.2]{IrieMaNe} for a detailed explanation). When $M$ has a non-trivial boundary $\partial M\neq\varnothing$, one can argue similarly. M. Li and X. Zhou \cite{LiZhoufree} extended the arguments of Almgren-Pitts to show that for each $\Pi$ as above, one can produce an integral varifold of mass $\mathbf{L}(\Pi)$ whose support is a smooth ``almost properly embedded" free boundary minimal hypersurface, see \cite[Section 2]{LiZhoufree} for definitions. Note that \cite[Theorem 4.21]{LiZhoufree} holds for discrete sweepouts with domains $X$ which are finite dimensional cubical complexes (see \cite{GLWZ21}). Then one can use the index bound of \cite{GLWZ21} together with \cite{Sharp} to conclude.

Another technical detail is the definitions of equivalence classes of relative cycles in \cite[Section 2]{LioMaNe} and \cite[Section 3]{LiZhoufree}: in \cite{LioMaNe} the authors use integral currents before defining the quotient space whereas in \cite{LiZhoufree}, the authors use currents which are only integer rectifiable. These formulations are equivalent by \cite[Lemma 3.8]{LiZhoufree}. They lead to the same notion of space of relative cycles $\mathcal{Z}_{n,rel}(M,\partial M;\mathbb{Z}_2)$ and the same functionals mass $\mathbf{M}$ and flat norm $\mathcal{F}$.

In our paper, we will need to consider the non-compact setting, so naturally we define the following.

\begin{definition} Let $(N,g)$ be a complete non-compact manifold. Let $K_1\subset K_2\subset... \subset  K_i\subset...$ be an exhaustion of $N$ by compact $(n+1)$-submanifolds with smooth boundary. The {\em $p$-width of $(N,g)$} is the number
$$\omega_p(N,g)=\lim_{i\to\infty} \omega_p(K_i,g) \in [0,\infty].$$
\end{definition}
For any two compact $(n+1)$-submanifolds with smooth boundary $A\subset B\subset N$, we have $ \omega_p(A,g)\leq  \omega_p(B,g)$; this follows from adapting the proof of \cite[Lemma 2.15, (1)]{LioMaNe}) to the case of general metrics. Therefore $\omega_p(N,g)$ is well-defined since $\omega_p(K_i,g)$ is a nondecreasing sequence of nonnegative numbers. Moreover it does not depend on the choice of the exhaustion $\{K_i\}$. 

Let $(\mathcal{C},h)$ be a complete manifold with cylindrical ends, i.e. outside of a compact subset, the manifold is isometric to $\Sigma \times [0,\infty)$ endowed with a product metric (here $\Sigma$ is a smooth $n$-dimensional manifold). The metric $h$ is allowed to be only Lipschitz continuous. We denote by $\mathcal{H}^n$ the Hausdorff $n$-dimensional volume. 

The following theorem is a key result in this paper: it states that the widths $\omega_p(\mathcal{C}) = \omega_p(\mathcal{C},h)$ increase with $p$ by a definite explicit amount (strict monotonicity), and that the asymptotic behavior of $\omega_p(\mathcal{C})$ is linear with an explicit leading term (cylindrical Weyl law).

\begin{theo} \label{asymptotic}
Let $(\mathcal{C},h)$ be an $(n+1)$-dimensional connected non-compact manifold with cylindrical ends, which is isometric to a product metric $(\Sigma \times [0,\infty),h_1\oplus dt^2)$ outside of a compact subset. Let $\Sigma_1,\dots,\Sigma_m$ be the connected components of $\Sigma$ and suppose that $\Sigma_1$ has the largest $n$-volume among these components:
$$\mathcal{H}^n(\Sigma_1)\geq \mathcal{H}^n(\Sigma_j) \text{ for all } j\in\{1,\dots,m\},$$
$\mathcal{H}^n$ being computed with $h_1$.
Then $\omega_p(\mathcal{C})$ is finite for all $p$ and the following holds:
\begin{enumerate}
\item for all $p\in\{1,2,3,\dots\}$,
$$
\omega_{p+1}(\mathcal{C})-\omega_p(\mathcal{C})\geq \mathcal{H}^n(\Sigma_1),
$$

\item moreover, there exists a constant $\hat{C}$ depending on $h$ such that for all $p\in\{1,2,3,\dots\}$:
$$ p\mathcal{H}^n(\Sigma_1) \leq \omega_p(\mathcal{C})\leq p\mathcal{H}^n(\Sigma_1) +\hat{C}p^{\frac{1}{n+1}}.$$
\end{enumerate}
\end{theo}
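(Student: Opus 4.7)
The two inequalities in (2) split: the lower bound $p\,\mathcal{H}^n(\Sigma_1) \leq \omega_p(\mathcal{C})$ follows by induction from (1) and the base case $\omega_1(\mathcal{C}) \geq \mathcal{H}^n(\Sigma_1)$, while the upper bound comes from an explicit $p$-sweepout. For the upper bound, fix a long interval $[0, L]\subset[0,\infty)$ and consider the polynomial sweepout
$$
[a_0:\cdots:a_p]\in\mathbb{RP}^p \longmapsto \partial\bigl\{(x,t)\in \Sigma_1\times[0,L] : a_0+a_1t+\cdots+a_pt^p \geq 0\bigr\}
$$
in $\mathcal{Z}_{n,rel}(\Sigma_1\times[0,L];\mathbb{Z}_2)$. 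Each image is a disjoint union of at most $p$ horizontal slices of total mass $\leq p\,\mathcal{H}^n(\Sigma_1)$, so the maximum mass is $\leq p\,\mathcal{H}^n(\Sigma_1)$, and this is a $p$-sweepout of the block. To promote it to a $p$-sweepout of an exhausting compact $K\supset\Sigma_1\times[0,L]$ inside $\mathcal{C}$, one joins with a Gromov--Guth sweepout of the core of $\mathcal{C}$ closing up the boundary configurations of $\mathbb{RP}^p$; the core sweepout contributes an $O(p^{1/(n+1)})$ mass correction, yielding the stated upper bound.

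\textbf{Base case $\omega_1 \geq \mathcal{H}^n(\Sigma_1)$.} In the end $\Sigma_1\times[0,\infty)$, the $n$-form $\omega = dV_{\Sigma_1}$ (pulled back from $\Sigma_1$) is closed with comass one for the product metric $h_1\oplus dt^2$ and calibrates every horizontal slice $\Sigma_1\times\{t\}$. Any $1$-sweepout $\Phi:X\to\mathcal{Z}_{n,rel}(K,\partial K)$ on $K\supset\Sigma_1\times[0,T]$ satisfies $\Phi^*\bar\lambda\neq 0$, so by the Almgren isomorphism $\pi_1(\mathcal{Z}_{n,rel}(K,\partial K))=H_{n+1}(K,\partial K;\mathbb{Z}_2)=\mathbb{Z}_2$, some $\Phi(x)$ bounds (modulo $\partial K$) a representative of the relative fundamental class of $K$. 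Slicing the corresponding $(n+1)$-chain along the $t$-coordinate in the $\Sigma_1$-end yields, for almost every level, a subcycle of $\Phi(x)$ homologous to $\Sigma_1\times\{t_0\}$ in a block $\Sigma_1\times[a,b]$; calibration by $\omega$ then forces $\mathbf{M}(\Phi(x))\geq \mathcal{H}^n(\Sigma_1)$.

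\textbf{Gap estimate (1).} Given $\epsilon>0$ and $T$ large, fix $K\supset\Sigma_1\times[0,T]$ and a $(p+1)$-sweepout $\Phi:X\to\mathcal{Z}_{n,rel}(K,\partial K)$ with $\sup_X\mathbf{M}(\Phi)\leq \omega_{p+1}(\mathcal{C})+\epsilon$. Let $\alpha=\Phi^*\bar\lambda$, so that $\alpha^{p+1}\neq 0$. Pick a generic point $q_0\in\Sigma_1\times\{t_0\}$ deep in the end, and set $Y=\overline{\{x\in X : q_0\in\spt\Phi(x)\}}$; after a transverse perturbation of $\Phi$, this is a codimension-one subcomplex Poincar\'e dual to $\alpha$, so $\Phi|_Y$ carries $\alpha^p$ and is a $p$-sweepout of $K$. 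For $x\in Y$, the base-case slicing-plus-calibration argument applied locally in the block $\Sigma_1\times[t_0-\delta,t_0+\delta]$ shows that $\Phi(x)$ contains a subcycle homologous in this block to $\Sigma_1\times\{t_0\}$ of mass at least $\mathcal{H}^n(\Sigma_1)-o(1)$ as $\delta\to 0$. Continuously subtracting $\Sigma_1\times\{t_0\}$ from $\Phi|_Y$ via a homotopy that slides this subcycle along $\partial_t$-flow lines and cancels it against $-\Sigma_1\times\{t_0\}$ produces a $p$-sweepout $\Psi:Y\to\mathcal{Z}_{n,rel}(K)$ with $\sup_Y\mathbf{M}(\Psi)\leq \sup_X\mathbf{M}(\Phi)-\mathcal{H}^n(\Sigma_1)+o(1)$, which yields (1) after letting $\epsilon,\delta\to 0$.

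\textbf{Main obstacle.} The crux is to carry out the ``subtract a slice'' deformation continuously in $x\in Y$ so that $\Psi$ is a genuine $p$-sweepout in the Almgren--Pitts sense: the cohomology class $\alpha^p$ must be preserved and no compensating mass can be introduced elsewhere. The pointwise mass decrement of $\mathcal{H}^n(\Sigma_1)$ follows from the calibration, but weaving these pointwise deformations into a continuous family of cycles requires Pitts-style interpolation inside $\mathcal{Z}_{n,rel}(K)$ with uniformly bounded intermediate mass. This is where the product structure of the end and the fact that $\{\Sigma_1\times\{t\}\}_{t\geq 0}$ is a one-parameter family of disjoint stable minimal hypersurfaces, along which cycles can be slid freely without mass increase, play an essential role.
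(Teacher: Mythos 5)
There is a genuine gap, and it sits exactly where you flag it: the ``subtract a slice'' deformation in the proof of~(1) is not carried out and is not merely a technical interpolation issue. To define $\Psi(x)=\Phi(x)-\text{(a slice-like subcycle)}$ continuously in $x$ and keep $\Psi$ a relative cycle, one would need a decomposition of $\Phi(x)$ that varies continuously in the flat topology, with no compensating mass appearing elsewhere, and there is no mechanism in the proposal that produces it; the stability of the slices $\{\Sigma_1\times\{t\}\}$ does not help, since the obstruction is topological (making the subtraction a continuous map into $\mathcal{Z}_{n,rel}$), not variational. The paper avoids this entirely with a Lusternick--Schnirelman argument: take a $(p+1)$-sweepout $\Phi$ of a large ball $B(q,R)\subset\mathcal{C}$ and two disjoint pieces, a ball $B(q,R_1)$ and a copy $E_0$ of $\Sigma_1\times[0,R_0]$ sitting deep in the end. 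Let $U_1$ (resp.\ $U_2$) be the open set in the parameter space where $\mathbf{M}(\Phi(\cdot)\llcorner B(q,R_1))<\omega_p(B(q,R_1))-\mu$ (resp.\ $\mathbf{M}(\Phi(\cdot)\llcorner E_0)<\omega_1(E_0)-\mu$). Since $\Phi^*\bar\lambda^{p+1}\neq 0$ while $\Phi|_{U_1}$ cannot carry $\bar\lambda^p$ and $\Phi|_{U_2}$ cannot carry $\bar\lambda$, cup-length forces $X\neq U_1\cup U_2$; any $x\notin U_1\cup U_2$ has total mass $\geq\omega_p(B(q,R_1))+\omega_1(E_0)-2\mu$, and letting $R_1\to\infty$, $\mu\to 0$ gives $\omega_{p+1}(\mathcal{C})\geq\omega_p(\mathcal{C})+\mathcal{H}^n(\Sigma_1)$. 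No new sweepout is ever produced; only one parameter value is inspected.

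Two smaller points. For the base case, your calibration heuristic is problematic with $\mathbb{Z}_2$ coefficients (one cannot integrate an $n$-form against a mod-2 chain), and the assertion that ``some $\Phi(x)$ bounds a representative of the relative fundamental class'' misstates the Almgren isomorphism (it is the family, via the Almgren chain map, that fills $K$, not any individual cycle). The paper instead identifies $\omega_1(\Sigma_1\times[0,R])=\mathcal{H}^n(\Sigma_1)$ for $R$ large directly: the slice sweepout gives the upper bound, and the Li--Zhou free-boundary min-max plus the maximum principle and monotonicity formula give the lower bound. For the upper bound in (2), your construction only sweeps the end $\Sigma_1\times[0,L]$; to exhaust $\mathcal{C}$ all components $\Sigma_1,\dots,\Sigma_m$ must be swept, and doing so naively costs $p\,\mathcal{H}^n(\Sigma)$, not $p\,\mathcal{H}^n(\Sigma_1)$. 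The paper's fix is to sweep the ends \emph{sequentially} via the Morse function $f(x,t)=(j-1)L+t$ on $\Sigma_j\times[0,L]$, so each level set is a slice of a \emph{single} component and the polynomial sweepout has mass at most $p\max_j\mathcal{H}^n(\Sigma_j)=p\,\mathcal{H}^n(\Sigma_1)$; this is then glued to a Gromov--Guth sweepout of the fixed compact core via the Liokumovich--Marques--Neves double-cover gluing, yielding a constant $\hat C$ independent of $L$.
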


\begin{proof}
Suppose first that each $\omega_p(\mathcal{C})$ is finite. We will use a few times \cite[Lemma 2.15]{LioMaNe} which holds true more generally for Riemannian manifolds with smooth boundaries. By hypothesis, there is a compact subset $A\subset \mathcal{C}$ such that $(\mathcal{C}\backslash A,h)$ is isometric to $(\Sigma\times[0,\infty), h_1\times dt^2)$. We need the following basic fact on the first width of cylinders:
\begin{equation} \label{omega1}
\exists R_0>0, \forall R\geq R_0,\quad \omega_1(\Sigma_1\times[0,R], h_1\times dt^2)  = \mathcal{H}^n(\Sigma_1). 
\end{equation}
This follows by noticing two things. 
On the one hand, the hypersurfaces $\{\Sigma_1\times \{r\}\}_{r\in [0,R]}$ give an explicit sweepout in $\mathcal{P}_1$ for which each non-trivial slice has $n$-volume equal to $\mathcal{H}^n(\Sigma_1)$, so 
$$\omega_1(\Sigma_1\times[0,R], h_1\times dt^2)  \leq \mathcal{H}^n(\Sigma_1).$$
On the other hand by applying the min-max theory in the setting with boundary of M. Li and X. Zhou \cite{LiZhoufree}, we get a varifold $V$ such that 
\begin{itemize}
\item the mass of $V$ is arbitrarily close to $\omega_1(\Sigma_1\times[0,R], h_1\times dt^2) $ (see (\ref{detail} and the following comments), 
\item the support of $V$ is a smooth almost properly embedded minimal hypersurface (see \cite[Subsection 2.3]{LiZhoufree} for the definition of ``almost properly embedded"). 
\end{itemize}
By the maximum principle each connected component of $\spt(V)$ is either of the form $\Sigma_1\times \{r\}$ or it intersects all such slices. Hence by the monotonicity formula, if $R$ is large enough, 
$$\omega_1(\Sigma_1\times[0,R], h_1\times dt^2)  \geq \mathcal{H}^n(\Sigma_1) \text{ for $R$ large enough}$$ 
and so (\ref{omega1}) is proved.

Now we show that
\begin{equation}\label{bloh}
\omega_1(\mathcal{C})\geq \mathcal{H}^n(\Sigma_1) \text{ and } \omega_{p+1}(\mathcal{C})-\omega_{p}(\mathcal{C})\geq \mathcal{H}^n(\Sigma_1) \text{ for $p\in\{1,2,...\}$}.
\end{equation}
This will immediately yield item (1) and the first inequality in item (2). Let $E_0$ be a subset of $\mathcal{C}$ isometric to $(\Sigma_1\times [0,R_0], h_1\times dt^2)$. By \cite[Lemma 2.15 (1)]{LioMaNe} and by (\ref{omega1}), for all $R$ large enough and point $q\in \mathcal{C}$, we have $E_0\subset B(q,R)$:
$$\omega_1(\mathcal{C})\geq \omega_1(B(q,R),h)\geq \omega_1(E_0,h)= \mathcal{H}^1(\Sigma_1).$$
The second formula in (\ref{bloh}) follows from a Lusternick-Schnirelmann type argument in the setting of Almgren-Pitts theory already used by Marques-Neves \cite{MaNeinfinity} and then by Liokumovich-Marques-Neves \cite{LioMaNe}. Let $\mu>0$, fix a point $q\in \mathcal{C}$ and suppose that $R_1$ is big enough so that 
\begin{align} \label{larrge}
\begin{split} 
\forall R\geq R_1, \quad \omega_{p}(B(q,R),h) & > \omega_{p}(\mathcal{C})-\mu. 
\end{split}
\end{align}
Let $R>R_1$ be large enough so that $B(q,R)$ contains the disjoint union of $B(q,R_1)$ and a subset isometric to $E_0$. Given $\Phi\in \mathcal{P}_{p+1}(B(q,R))$ continuous in the mass topology, with $X=\dmn(\Phi)$, consider $U_1$ and $U_2$ the open subsets of $X$ given by \cite[Lemma 2.15 (2)]{LioMaNe} containing respectively the open subsets:
$$\{x\in X; \mathbf{M}(\Phi(x)\llcorner B(q,R_1)) < \omega_p(B(q,R_1),h) - \mu \}$$
$$ \text{and }\{x\in X; \mathbf{M}(\Phi(x)\llcorner E_0 ) < \omega_1(E_0,h) - \mu\}.$$
Arguing as in \cite[Theorem 3.1]{LioMaNe}, we obtain $X\neq U_1\cup U_2$. Let $x\in X\backslash (U_1\cup U_2)$. Then by (\ref{omega1}) and (\ref{larrge}):
\begin{align*}
\omega_{p+1}(\mathcal{C})
& \geq \omega_{p+1}(B(q,R),h)\\
& \geq \omega_p(B(q,R_1),h)+\omega_1(E_0,h) -2\mu\\
& \geq \omega_{p}(\mathcal{C})+\mathcal{H}^n(\Sigma_1) -3\mu.
\end{align*}
Using \cite[Corollary 2.13]{LioMaNe}, and making $\mu$ go to $0$ we obtain
$$\omega_{p+1}(\mathcal{C}) \geq \omega_{p}(\mathcal{C}) + \mathcal{H}^n(\Sigma_1).$$

We need to show that the widths $\omega_p(\mathcal{C})$ are finite and satisfy the second inequality in item (2). One way would be to construct an explicit $p$-sweepout in $\mathcal{P}_p$ as in \cite[Theorem 5.1]{MaNeinfinity} using the bend-and-cancel argument of Guth \cite{Guth}. Instead we use the gluing technique of Liokumovich-Marques-Neves \cite{LioMaNe} which enables to combine the $p$-sweepouts over the same domain $X$ of several compact submanifolds $A_1,\dots,A_k$ with disjoint interiors 
into one $p$-sweepout over $X$ of the union $A_1\cup\dots\cup A_k$. By assumption there is a connected compact submanifold with boundary $A$ such that $(\mathcal{C}\backslash A,g)$ is isometric to $(\Sigma\times[0,\infty),h_1\times dt^2)$. We view $\Sigma\times [0,\infty)$ as an $(n+1)$-submanifold of $\mathcal{C}$. Let $L>0$ and define $B_L:=\Sigma\times [0,L]$. The boundaries of the two submanifolds $A$ and $B_L$ intersect along the closed hypersurface $\Sigma\times\{0\}$. Fix $p$. By the sublinear bound on the $p$-widths \cite[Theorem 5.1]{MaNeinfinity} (see also \cite[Section 8]{Gromovwaist} \cite[Theorem 1]{Guth}) which holds for compact manifolds with boundary with a Lipschitz metric, there is a $p$-sweepout $\Phi:\mathbb{RP}^p\to \mathcal{Z}_{n,rel}(A,\partial A;\mathbb{Z}_2)$ in $\mathcal{P}_p$ and there is a constant ${C}>0$ depending on $(A,h)$ but independent of $p$ such that 
\begin{equation} \label{massbound}
\sup_{x\in \mathbb{RP}^p}\mathbf{M}(\Phi(x))\leq {C} p ^{\frac{1}{n+1}}.
\end{equation}
Recall that $\Sigma_1,\dots,\Sigma_m$ are the connected components of $\Sigma$. Let $f:B_L\to \mathbb{R}$ be the Morse function defined by $f(x,t):=(j-1)L+t$ if $(x,t)\in \Sigma_j\times[0,L]$. As in \cite[Theorem 5.1]{MaNeinfinity}, $f$ determines a $p$-sweepout in $\mathcal{P}_p$ as follows.  Define
$$\hat{\Psi} : \{a\in\mathbb{R}^{p+1}; |a|=1\} \to \mathcal{Z}_{n,rel}(B_L,\partial B_L;\mathbb{Z}_2)$$
$$\hat{\Psi}(a_0,\dots,a_p) = \partial \{x\in B_L ; \sum_{i=0}^pa_i f(x)^i<0\}.$$
Since $\hat{\Psi}(a)=\hat{\Psi}(-a)$, $\hat{\Psi}$ induces a map $\Psi:\mathbb{RP}^{p}\to \mathcal{Z}_{n,rel}(B_L,\partial B_L;\mathbb{Z}_2)$ which is a $p$-sweepout in $\mathcal{P}_p$. Note that since we assume $\mathcal{H}^n(\Sigma_1)\geq \mathcal{H}^n(\Sigma_j) \text{ for all } j\in\{1,\dots,m\}$ where the $n$-volume is computed with $h_1$, we have
 \begin{equation} \label{massbound1}
\sup_{x\in \mathbb{RP}^p}\mathbf{M}(\Psi(x))\leq p\mathcal{H}^n(\Sigma_1).
 \end{equation}
Note that $B_L= \bigcup_{j=1}^m (\Sigma_j \times[0,L])$ and that each $\Sigma_j \times[0,L]$ is connected. In \cite[Section 4]{LioMaNe}, it is shown that for all $i\in\{1,\dots,m\}$,
\begin{align*}
SX_{0}: & =\{(x,Z) ; \quad x\in \mathbb{RP}^p, \Phi(x)-\partial Z \in \mathbf{I}_n(\partial A;\mathbb{Z}_2)\} \\
& \subset \mathbb{RP}^p\times \mathbf{I}_{n+1}(A;\mathbb{Z}_2),
\end{align*}
\begin{align*}
SX_i:& =\{(x,Z) ; \quad x\in \mathbb{RP}^p, \Psi(x)\llcorner (\Sigma_i \times[0,L])-\partial Z \in \mathbf{I}_n(\Sigma_i \times\{0,L\};\mathbb{Z}_2)\} \\
& \subset \mathbb{RP}^p\times \mathbf{I}_{n+1}(\Sigma_i \times[0,L];\mathbb{Z}_2)
\end{align*}
are isomorphic $2$-covers of $\mathbb{RP}^p$ (they are actually isomorphic to the sphere $S^p$). Let $F_i:S^p\to SX_i$ ($i=0,\dots,m$) be the corresponding isomorphism. Let $\Xi_i:SX_i\to \mathbf{I}_{n+1}(A\cup B_L;\mathbb{Z}_2)$ ($i=0,\dots,m$) be the natural projection. Set
$$\hat{\varkappa}_L: S^p \to \mathbf{I}_{n}(A\cup B_L;\mathbb{Z}_2)
$$
$$\hat{\varkappa}_L(y) = \sum_{i=0}^{m}\partial(\Xi_i\circ F_i(y)). $$
Since for any $y\in S^p$, $\hat{\varkappa}_L(y)=\hat{\varkappa}_L(-y)+\partial (A\cup B_L)$ in $\mathbf{I}_{n}(A\cup B_L;\mathbb{Z}_2)$, $\hat{\varkappa}_L$ induces a map 
$$\varkappa_L : \mathbb{RP}^p\to \mathcal{Z}_{n,rel}(A\cup B_L,\partial( A\cup B_L);\mathbb{Z}_2).$$
By (\ref{massbound}) and (\ref{massbound1}), 
$$\sup_{x\in \mathbb{RP}^p} \mathbf{M}(\varkappa_L(x)) \leq p\mathcal{H}^n(\Sigma_1)+{C}p^{\frac{1}{n+1}} + \mathcal{H}^n(\Sigma).$$
The map $\varkappa_L$ is a $p$-sweepout without concentration of mass (see \cite{LioMaNe}) thus for $\hat{C}=C+\mathcal{H}^n(\Sigma)$,  
$$\omega_p(\mathcal{C}) = \lim_{L\to \infty}\omega_p(A \cup B_L) \leq  \sup_{x\in \mathbb{RP}^p} \mathbf{M}(\varkappa_L(x)) \leq p\mathcal{H}^n(\Sigma_1) +\hat{C}p^{\frac{1}{n+1}} $$
and so the widths $\omega_p(\mathcal{C})$ are finite and satisfy the second inequality of item (2). The theorem is proved.

\end{proof}

\subsection{Confined min-max closed minimal hypersurfaces} \label{confined}
Let $(U,g)$ and $(\mathcal{C}(U),h)$ be as in Subsection \ref{construction}. In the following, we will prove that the widths $\omega_p(\mathcal{C}(U))$ are realized as the $n$-volume (with multiplicities) of smooth closed minimal hypersurfaces confined in the non-cylindrical part, i.e. the interior of $U$. The proof will use the approximations $(U_\epsilon,h_\epsilon)$ previously constructed.

\begin{theo} \label{minmax}
Let $(\mathcal{C}(U),h)$ be as in Subsection \ref{construction}. For all $p\in\{1,2,3,\dots\}$, there exist disjoint smooth closed connected embedded minimal hypersurfaces $\Gamma_1,\dots,\Gamma_N$ contained in $U\backslash \partial U$ and positive integers $m_1,\dots,m_N$ such that
$$\omega_p(\mathcal{C}(U)) = \sum_{j=1}^N m_j \mathcal{H}^n(\Gamma_j).$$
Besides, if $\Gamma_j$ is one-sided then the corresponding multiplicity $m_j$ is even.
\end{theo}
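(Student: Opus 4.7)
My plan is to realize $\omega_p(\mathcal{C}(U))$ by running the Li--Zhou free boundary min-max on the compact approximations $(U_\epsilon, h_\epsilon)$ from Subsection~\ref{construction}, and then extracting a subsequential varifold limit as $\epsilon \to 0$. The strictly mean-convex foliation of Lemma~\ref{meancurv} will force the resulting min-max varifolds away from the folding region (so they are closed rather than free-boundary), and Proposition~\ref{Allard} together with the uniform monotonicity of Lemma~\ref{monotonicity} will control the limit across the non-smooth locus $\partial U \subset \mathcal{C}(U)$.

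The first step is to show $\omega_p(U_\epsilon, h_\epsilon) \to \omega_p(\mathcal{C}(U))$ as $\epsilon \to 0$. For the upper bound, pull back sweepouts of an exhaustion $K_i \subset \mathcal{C}(U)$ to sweepouts of $U_\epsilon$ via the diffeomorphisms of Lemma~\ref{geomconv}, controlling the mass error in the folding region using the $C^0$ convergence and $C^1$ bound of Lemma~\ref{norm}. For the lower bound, observe that the stretched folding region of $(U_\epsilon, h_\epsilon)$ contains a cylindrical piece $\partial U \times [0, L_\epsilon]$ of length $L_\epsilon \geq \int_0^{\delta_\epsilon}\vartheta_\epsilon \to \infty$, so any $p$-sweepout on $(U_\epsilon, h_\epsilon)$ descends naturally to a $p$-sweepout on a large exhaustion piece of $\mathcal{C}(U)$ with essentially the same mass.

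Next, for each fixed $\epsilon>0$ and $\eta>0$, apply Li--Zhou \cite{LiZhoufree} to $(U_\epsilon, h_\epsilon)$ to obtain an integral $h_\epsilon$-stationary varifold $V_{\epsilon,\eta}$ of mass within $\eta$ of $\omega_p(U_\epsilon, h_\epsilon)$, whose support is a smooth almost properly embedded free boundary minimal hypersurface. Lemma~\ref{meancurv}(1) supplies a smooth foliation $\{\gamma_\epsilon(\partial U_\epsilon \times \{t\})\}_{t\in[0,\delta_\epsilon]}$ by strictly mean-convex barriers. By White's strong maximum principle for stationary integral varifolds, this forces
$$
\spt(V_{\epsilon,\eta}) \subset U_\epsilon \setminus \gamma_\epsilon(\partial U_\epsilon \times [0,\delta_\epsilon]);
$$
otherwise the height $t$ restricted to the support would attain an extremum tangent to a non-minimal leaf, a contradiction. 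Since $h_\epsilon = g$ on this ``core'' region, $V_{\epsilon,\eta}$ is a closed (rather than free-boundary) integral $g$-stationary varifold with smooth support in $U\setminus\partial U$.

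Finally, select diagonal sequences $\epsilon_k,\eta_k\to 0$ so that $V_k := V_{\epsilon_k,\eta_k}$ has mass converging to $\omega_p(\mathcal{C}(U))$, and extract a subsequential varifold limit $V$ on $\mathcal{C}(U)$. On $\mathcal{C}(U)\setminus\partial U$, where $h$ is smooth, $V$ is $h$-stationary and integral by standard compactness. To rule out mass on $\partial U$, I would apply Lemma~\ref{monotonicity} in the $C^1$-bounded charts $\theta_{\epsilon_k}$ from Lemma~\ref{norm}: since $\spt(V_k)$ is pushed away from $\partial U$ by a uniform $h_{\epsilon_k}$-distance tending to infinity inside these charts, the uniform monotonicity forces the mass of $V_k$ in small balls around points of $\partial U$ to vanish in the limit. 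Integrality across the non-smooth locus is then delivered by Proposition~\ref{Allard}. Standard regularity for integral stationary min-max varifolds in dimensions $n+1 \leq 7$ (Schoen--Simon combined with Sharp's compactness) decomposes $\spt(V) \subset U\setminus\partial U$ into disjoint smooth embedded closed connected minimal hypersurfaces $\Gamma_1,\dots,\Gamma_N$ with positive integer multiplicities $m_j$, giving $\omega_p(\mathcal{C}(U)) = \sum_j m_j \mathcal{H}^n(\Gamma_j)$. The evenness of $m_j$ for one-sided $\Gamma_j$ follows from the $\mathbb{Z}_2$-coefficient formalism: a one-sided hypersurface cannot appear as a mod-$2$ boundary with odd multiplicity in any tubular neighborhood. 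The hardest part will be the mass non-concentration on $\partial U$, which is why Lemma~\ref{monotonicity} and Proposition~\ref{Allard} were specifically formulated to allow $C^0$-convergent metrics with $C^1$ bounds.
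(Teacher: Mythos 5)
Your maximum principle claim is wrong, and it papers over the hardest part of the proof. The foliation $\{\gamma_\epsilon(\partial U_\epsilon\times\{t\})\}_{t\in[0,\delta_\epsilon]}$ has mean curvature vector pointing in the $-\frac{\partial}{\partial t}$ direction, so the maximum principle rules out an interior local \emph{maximum} of $t$ on a connected component of $\spt(V_{\epsilon,\eta})$ strictly below $\delta_\epsilon$ (the surface would lie on the mean-convex side of the tangent leaf), but it says nothing about local \emph{minima} of $t$. A component is therefore free to dip into the folding region $\gamma_\epsilon(\partial U_\epsilon\times[0,\delta_\epsilon])$ as long as it also exits at $t=\delta_\epsilon$ — which is exactly the weaker conclusion the paper draws (``if a connected component of $S_\epsilon$ intersects $\gamma_\epsilon(\partial U_\epsilon\times[0,\delta_\epsilon])$ then it also has to intersect $\Phi(\partial U\times\hat{t})$''). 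The claim $\spt(V_{\epsilon,\eta})\subset U_\epsilon\setminus\gamma_\epsilon(\partial U_\epsilon\times[0,\delta_\epsilon])$ is false, and your assertion that $V_{\epsilon,\eta}$ is consequently a \emph{$g$-stationary} varifold does not follow: $h_\epsilon$ and $g$ disagree precisely in the folding region, which can carry mass.

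This is not a minor slip. If the $V_{\epsilon_k}$ were already $g$-stationary, then $g$-stationarity of the limit $V_\infty$ would be immediate, and you would entirely bypass what is by far the longest and most delicate part of the paper's proof: the demonstration, carried out by decomposing a test vector field $X=X^\epsilon_\perp+X^\epsilon_\parallel$ and testing the first variation against a carefully chosen vector field $Y^\epsilon=(1-\beta(s))\exp(-Cs)\frac{\partial}{\partial s}$, that the limit $V_\infty$ is $g$-stationary even though the metrics $h_{\epsilon_k}$ converge to $h$ only in $C^0$ (with $C^1$ bounds). That argument uses Lemma~\ref{meancurv}(2) (mean curvature of slices tends to $0$), establishes estimate (\ref{consequence1}) showing that the limiting mass near $\partial U$ concentrates on tangent planes $T_x\partial U$, and only then invokes Proposition~\ref{Allard} for integrality. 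What monotonicity in the paper actually delivers is a uniform radius $\tilde{R}$ so that $\spt(V_\epsilon)\subset B_{h_\epsilon}(q,\tilde{R})$ for a fixed interior $q$; since the folding region is stretched to infinite $h_\epsilon$-length, this keeps the support away from $\partial U_\epsilon$ (hence closed, not free boundary, for $\epsilon$ small) but does \emph{not} keep it out of the folding region, whose interior edge $\gamma_\epsilon(\partial U_\epsilon\times\{\delta_\epsilon\})$ collapses onto $\partial U$ as $\epsilon\to 0$. You would need to reinstate the paper's first-variation analysis to close this gap.
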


\begin{proof}

\textbf{Step 1.}
Consider the compact smooth approximations $(U_\epsilon,h_\epsilon)$ constructed in Subsection \ref{construction}. We fix $p$ and apply the Almgren-Pitts min-max theorem for $p$-sweepouts with relative cycles in the setting with boundary, developed by M. Li and X. Zhou \cite{LiZhoufree} (their results hold for more general sweepouts as defined in \cite{MaNeinfinity, LioMaNe}). We obtain a varifold $V_\epsilon$ of total mass $\epsilon$-close to $\omega_p(U_\epsilon,h_\epsilon)$ by (\ref{detail}) and the following remarks. The support of $V_\epsilon$ is a smooth compact almost properly embedded free boundary minimal hypersurface $S_\epsilon \subset (U_\epsilon,h_\epsilon)$ (see \cite[Subsection 2.3]{LiZhoufree} for the definition of ``almost properly embedded"). Since the boundary $\partial U_\epsilon$ is strictly mean concave with respect to the unit normal pointing outside of $U_\epsilon$, no component of the hypersurface $S_\epsilon$ is contained in the boundary $\partial U_\epsilon$. Moreover by 
the maximum principle and Remark \ref{positivefoliation}, if a connected component of $S_\epsilon$ intersects $\gamma_\epsilon(\partial U_\epsilon \times [0,\delta_\epsilon])$ then it also has to intersect $\Phi(\partial U \times \{\hat{t}\})$ (see Subsection \ref{construction} for notations). Since by Lemma \ref{geomconv} (2), the regions $\gamma_\epsilon(\partial U_\epsilon \times [0,\delta_\epsilon])$ look more and more cylindrical, the monotonicity formula indicates that there is a radius $\tilde{R}>0$ and a point $q\in U\backslash \partial U$ such that for all $\epsilon>0$ small enough, the varifold $V_\epsilon$ is supported in the ball $B_{h_\epsilon}(q,\tilde{R})$. In particular for $\epsilon$ small, all the smooth minimal hypersurfaces produced by the min-max theory are closed minimal hypersurface embedded in the interior of $U_\epsilon$. Besides the one-sided components have even multiplicities: this follows from the Multiplicity One theorem \cite{Zhoumultiplicityone,SWZ20}, and an elementary perturbation and compactness argument.
Note that for any $\epsilon$ fixed and small, these facts are true for small perturbations of the metric $h_\epsilon$ and so \cite[Theorem 1.2]{MaNeindexbound} is true for $(U_\epsilon,h_\epsilon)$ (see the comment after (\ref{detail})).



\vspace{1em}
\textbf{Step 2.}
The next step is to take a limit as a sequence $\epsilon_k\to 0$ and argue that $S_{\epsilon_k}$ converges to a smooth closed minimal hypersurface contained in the interior of $U$. Note that $\omega_p(U_\epsilon,h_\epsilon)$ converges to $\omega_p(\mathcal{C}(U),h)$. Thus for a sequence $\epsilon_k \to 0$, the varifolds $V_{\epsilon_k}$ produced by min-max subsequently converge in the varifold sense to a varifold $V_\infty$ in $\mathcal{C}(U)$ of total mass $\omega_p(\mathcal{C}(U),h)$, whose support is denoted by $S_\infty$. Moreover $S_\infty$ is contained in the ball $B_h(q,2\tilde{R})$. Even though the metrics $h_\epsilon$ converge to $h$ only in the $C^0$ topology and $h$ is not smooth, the varifold convergence makes sense as usual by using Lemma \ref{geomconv} and Lemma \ref{norm}.


%
By the index bound of Marques-Neves \cite[Theorem 1.2]{MaNeindexbound} which still holds in our situation with boundary, the minimal hypersurfaces $S_\epsilon$ can be chosen to have index bounded by $p$ when $\epsilon$ is small. Thus by Sharp's compactness result \cite{Sharp}, for a well chosen sequence $\epsilon_k \to 0$, the restriction of the support $S_\infty$ of $V_\infty$ to $\mathcal{C}(U) \backslash \partial U$ is a smooth minimal hypersurface, since the metrics $h_\epsilon$ converge smoothly in this region by Lemma \ref{geomconv}. We first observe that by the maximum principle any component of $S_\infty$ in the cylindrical part $\mathcal{C}(U) \backslash U$ would be a slice isometric to $(\partial U,g)$. But as we explained two paragraphs before, any component of $S_{\epsilon_k}$ intersects $\Phi(\partial U \times \hat{t})$ so it shows that the support $S_\infty$ is contained in the compact set $(U,g)$. Secondly, we wish to 
prove that $V_\infty$ is $g$-stationary. Indeed supposing this is verified, then applying Lemma \ref{norm} and Proposition \ref{Allard}, we obtain that $V_\infty$ is actually a $g$-stationary integral varifold. By the monotonicity formula, no component of $S_\infty$ is contained in $\partial U$. Hence the maximum principle proved by White \cite{WhiteMP} (based on \cite{SolomonWhiteMP}) implies that $S_\infty \cap \partial U = \varnothing$. Therefore the support $S_\infty$ of $V_\infty$ is a closed minimal hypersurface smoothly embedded inside $\interior(U)$. By the description of convergence of finite index minimal hypersurfaces \cite{Sharp}, the one-sided components of $S_\infty$ have even multiplicities. This would complete the proof.

\vspace{1em}
\textbf{Step 3.}
In the remaining steps of the proof, we show that $V_\infty$ is $g$-stationary in $U$. It is non-trivial only because the limit metric $h$ is not smooth in general. 

In Step 3, we describe what $V_\epsilon$ looks like around the boundary $\partial U_\epsilon$.
For $\epsilon\geq0$, we will denote by $\divergence^\epsilon$ the divergence computed in the metric $h_\epsilon$ (by convention $h_0=g$). Let $X$ be any vector field smooth up to the boundary $U$. Our goal is to prove that the first variation along $X$ vanishes:
\begin{equation} \label{stat}
\delta V_\infty(X)= \int \divergence^0_S X(x) dV_\infty(x,S) = 0.
\end{equation}
Recall that for all $\epsilon\geq 0$ small, the map
$$\gamma_\epsilon : \partial U_\epsilon \times [0,\hat{d}] \to U$$
$$\gamma_\epsilon(x,t) = \exp(x, t\nu)$$
is a diffeomorphism onto its image, where $\hat{d}>0$ is independent of $\epsilon$. Here $\exp$ is the exponential map for $g$, and $\nu$ is the inward unit normal of $\partial U_\epsilon$ for $g$. By abuse of notations, we define the following vector fields:
$$\frac{\partial}{\partial t}:=(\gamma_\epsilon)_*(\frac{\partial}{\partial t}),$$
$$\frac{\partial}{\partial s}:=\frac{(\gamma_\epsilon)_*(\frac{\partial}{\partial t})}{\|(\gamma_\epsilon)_*(\frac{\partial }{\partial t})\|_{h_\epsilon}},$$
so that $\frac{\partial}{\partial t}$ (resp. $\frac{\partial}{\partial s}$) is a unit vector field for $g$ (resp. $h_\epsilon$). Let $s$ be the coordinate obtained by integrating $ \frac{\partial}{\partial s}$ from $\gamma_\epsilon(\partial U_\epsilon\times \{\delta_\epsilon\})$, so that $\{s=0\}=\gamma_\epsilon(\partial U_\epsilon\times \{\delta_\epsilon\})$ and $s<0$ on $\gamma_\epsilon(\partial U_\epsilon\times [0,\delta_\epsilon))$.
Because the support of $V_\infty$ restricted to $\interior(U)$ is a smooth minimal hypersurface, we can assume that the vector field $X$ is supported inside $\gamma_0(\partial U\times [0,\hat{d}/2])$. Thus for all $\epsilon$ small enough, the vector field $X$ restricted to $U_\epsilon $ can be decomposed into two components
$$X=X^\epsilon_{\perp}+X^\epsilon_{\parallel} $$
where $X^\epsilon_{\perp}$ is orthogonal to $\frac{\partial}{\partial t}$ (for either $g$ or $h_\epsilon$) and $X^\epsilon_{\parallel}$ is a multiple of $\frac{\partial}{\partial t}$. 


Let $(y,S)$ be a point of the Grassmannian bundle of $U$ and let $(e_1,\dots,e_n)$ be an $h_\epsilon$-orthonormal basis of $S$ so that $e_1,\dots,e_{n-1}$ are $h_\epsilon$-orthogonal to $\frac{\partial}{\partial s}$. Let $e^*_n$ be a unit vector such that $(e_1,\dots,e^*_n)$ is an $h_\epsilon$-orthonormal basis of the $n$-plane $h_\epsilon$-orthogonal to $\frac{\partial}{\partial s}$ at $y$.
If $y=\gamma_\epsilon(x,t)\in\gamma_\epsilon(\partial U_\epsilon\times[0,\hat{d}])$, let $H^\epsilon$ (resp. $\mathbf{A}^\epsilon$) denote the mean curvature (resp. second fundamental form) at $y$ of $\gamma_\epsilon(\partial U_\epsilon\times\{t\})$, with respect to $h_\epsilon$ and the unit normal vector $ \frac{\partial}{\partial s}$. Let $C>0$ be a constant larger than the norm of $\mathbf{A}^\epsilon$: in view of Lemma \ref{meancurv} (3), $C$ can be chosen independently of $y$ and $\epsilon$. 

Let $a_1<b_1<b_2<a_2$ be four numbers in $(-\infty,\hat{d})$. Consider a smooth nondecreasing function $\beta:\mathbb{R} \to [0,1]$ such that 
\begin{itemize}
\item $\beta(s)\equiv0$ (resp. $1$) when $s\leq a_1$ (resp. $s\geq a_2$),
\item on $[b_1,b_2]$, $\frac{\partial \beta}{\partial s} \geq \frac{1}{2(a_2-a_1)}$.
\end{itemize} 

Define the following vector field on $U_\epsilon$:
$$Y^\epsilon:= (1-\beta(s)) \exp(-Cs) \frac{\partial }{\partial s},$$

For all $\epsilon>0$, we compute similarly to the proof of \cite[Lemma 2.2]{MaNe} the divergence of $Y^\epsilon$ 
(with respect to the metric $h_\epsilon$) at $(y,S)$:
\begin{align} \label{divY}
\begin{split}
\divergence^\epsilon_S Y^\epsilon  = & \frac{\partial (1-\beta(s))}{\partial s} \exp(-Cs) |h_\epsilon(e_n,\frac{\partial}{\partial s})|^2  \\ 
& -C(1-\beta(s))\exp(-Cs)|h_\epsilon(e_n,\frac{\partial}{\partial s})|^2 \\ 
& +(1-\beta(s))\exp(-Cs) H^\epsilon \\ 
&- (1-\beta(s)) \exp(-Cs) \mathbf{A}^\epsilon(e^*_n,e^*_n) |h_\epsilon(e_n,\frac{\partial}{\partial s})|^2 \\
= &\exp(-Cs) |h_\epsilon(e_n,\frac{\partial}{\partial s})|^2  \Big( \frac{\partial (1-\beta(s))}{\partial s} - C(1-\beta(s)) - (1-\beta(s)) \mathbf{A}^\epsilon(e^*_n,e^*_n) \Big) \\ 
& +(1-\beta(s))\exp(-Cs))H^\epsilon  \\
\leq & -\exp(-Cs)\frac{\partial \beta(s)}{\partial s} |h_\epsilon(e_n,\frac{\partial}{\partial s})|^2  +(1-\beta(s))\exp(-Cs))H^\epsilon.
\end{split}
\end{align}
For the inequality, we used that $C$ is larger than the norm of the second fundamental forms. 
Note that since the limit varifold $V_\infty$ has support in $U$, by the monotonicity formula and Lemma \ref{geomconv} (2), the coordinate $s$ stays bounded independently of $\epsilon$ on $\spt(V_\epsilon)$ when defined. Recall that by Lemma \ref{meancurv} (2), the term $(1-\beta(s))H^\epsilon$ goes to $0$ as $a_2\to 0$, uniformly in $\epsilon$. Since the varifold $V_\epsilon$ is $h_\epsilon$-stationary, for all $\epsilon>0$ small:
$$\delta V_\epsilon(Y^\epsilon) =\int \divergence^\epsilon Y^\epsilon dV_\epsilon=0$$
computed with $h_\epsilon$, for any $\epsilon$ small and any choice of $a_1<b_1<a_2<b_2$ in the definition of $\beta$. Recall also that the mass of $V_\epsilon$ is bounded uniformly in $\epsilon$ and that its support stays uniformly far from the boundary of $(U_\epsilon,h_\epsilon)$ for $\epsilon$ small.

Collecting these facts together, the previous computation implies the following. If we fix $b_1<0$ and let $a_2$ and $b_2>0$ tend to $0$, inequality (\ref{divY}) and the fact that $\delta V_\epsilon(Y^\epsilon) =0$ show that with the previous notation for $e_n$:
\begin{equation} \label{consequence1}
\begin{split}
& \forall \hat{a}>0, \forall \hat{b}>0, \exists \hat{r}>0, \exists \epsilon_0, \forall \epsilon\in(0,\epsilon_0], \quad \\ & \int_{\gamma_\epsilon(\partial U_\epsilon\times[0,\delta_\epsilon+\hat{r}]) \times \mathbf{G}(n+1,n)} \chi_{\{|h_\epsilon(e_n, \frac{\partial}{\partial s})|>\hat{b}\}} dV_\epsilon(x,S) <\hat{a},
\end{split}
\end{equation}
where $\chi_{\{|h_\epsilon(e_n, \frac{\partial}{\partial s})|>\hat{b}\}}$ is the characteristic function of the set of $(x,S)$ such that $|h_\epsilon(e_n, \frac{\partial}{\partial s})|>\hat{b}$. In particular by taking $\epsilon \to 0$ in the previous expression,
\begin{equation} \label{consequence1'}
V_\infty\llcorner \{(x,S); x\in \partial U, S\neq T_x\partial U\} =0.
\end{equation}

\vspace{1em}
\textbf{Step 4.}

In the final step, we explain how to deduce  (\ref{stat}) from the previous estimates. Let $\{\hat{r}_k\}$ be a sequence of positive numbers converging to $0$. Consider
$$A_k:=\gamma_\epsilon([0,\delta_\epsilon+\hat{r}_k]),$$
$$B_k:=U_\epsilon \backslash A_k.$$
Then, by taking a subsequence of $\{\epsilon_k\}$ and renumbering if necessary, we can assume that there are two varifolds $V'_\infty$, $V''_\infty$ in $U$ so that as $k\to \infty$, the following convergences in the varifold sense take place:
$$V_k:=V_{\epsilon_k}\rightharpoonup V_\infty $$
$$V'_k:=V_{\epsilon_k}\llcorner(A_k\times \mathbf{G}(n+1,n)) \rightharpoonup V'_\infty,$$
$$V''_k:=V_{\epsilon_k}\llcorner(B_k\times \mathbf{G}(n+1,n)) \rightharpoonup V''_\infty.$$
Recall that we decomposed $X=X^\epsilon_{\perp}+X^\epsilon_{\parallel} $. We will show first  that 
\begin{equation} \label{separately1}
\lim_{k\to \infty} \int \divergence^{\epsilon_k} X^{\epsilon_k}_{\perp} dV_k= \int \divergence^0 X^0_{\perp} dV_\infty
\end{equation}
and this will imply 
$$\int \divergence^0 X^0_{\perp} dV_\infty=0$$
since the left-hand side of (\ref{separately1}) is zero by $h_\epsilon$-stationarity of $V_k$. Then we will show less directly that 
\begin{equation} \label{separately2}
 \int \divergence^0 X^0_{\parallel} dV_\infty=0.
\end{equation}
Together, (\ref{separately1}) and (\ref{separately2}) would imply (\ref{stat}). In both cases, the decomposition $V_k=V'_k+V''_k$ is useful.

\vspace{1em}
\textbf{Step 4a.}
Let us  start with $X^0_{\perp}$.
We record the fact that, since on $B_k $ the metric $h_\epsilon$ coincides with the original metric $g$, for any smooth vector field $\tilde{X}$ on $M$: 
\begin{equation} \label{limit1}
\lim_{k\to \infty} \int \divergence^{\epsilon_k} \tilde{X} dV''_k=\lim_{k\to \infty} \int \divergence^{0} \tilde{X} dV''_k= \int \divergence^0 \tilde{X} dV''_\infty.
\end{equation}
In particular we already obtain 
$$\lim_{k\to \infty} \int \divergence^{\epsilon_k} X^{\epsilon_k}_{\perp} dV''_k=\lim_{k\to \infty} \int \divergence^{\epsilon_k} X^0_\perp dV''_k= \int \divergence^0X^0_\perp dV''_\infty$$
since $X^\epsilon_{\perp}$ converges smoothly to $X^0_{\perp}$ on $U$. To prove the analogous convergence for $V'_k$, we evaluate the divergence of $X^\epsilon_{\perp} $ in $h_\epsilon$ for $\epsilon>0$ as follows. 
Let $\nabla^\epsilon$ be the Levi-Civita connection of $h_\epsilon$. By definition of $h_\epsilon$, for any $\epsilon>0$ the restriction of $h_\epsilon$ and $g$ to a slice $\gamma_\epsilon(\partial U_\epsilon\times\{t\})$ coincide. Thus for any vector $e' $ in the tangent bundle of a slice $\gamma_\epsilon(\partial U_\epsilon\times\{t\})$, we have
\begin{equation} \label{bydef}
h_\epsilon(\nabla^\epsilon_{e'}X^\epsilon_{\perp} , e') = g(\nabla^0_{e'} X^\epsilon_{\perp} , e').
\end{equation}
If $(x,S)$, let $e_1,\dots,e_n,e^*_n$ be defined as before and let $S_\perp$ denote the $n$-plane at $x$ orthogonal to $\frac{\partial}{\partial s}$. For all $(x,S)$ such that $x\in A_k$, we have:
\begin{align}\label{long}
\begin{split}
\divergence^\epsilon_S X^\epsilon_{\perp}  
= & \sum_{i=1}^n h_\epsilon(\nabla^\epsilon_{e_i}X^\epsilon_{\perp},e_i) \\
=& \sum_{i=1}^{n-1} h_\epsilon(\nabla^\epsilon_{e_i}X^\epsilon_{\perp},e_i) +h_\epsilon(\nabla^\epsilon_{e^*_n}X^\epsilon_{\perp},e^*_n) +\big(h_\epsilon(\nabla^\epsilon_{e_n}X^\epsilon_{\perp},e_n)-h_\epsilon(\nabla^\epsilon_{e^*_n}X^\epsilon_{\perp},e^*_n)\big)\\
=& \sum_{i=1}^{n-1} g(\nabla^0_{e_i}X^\epsilon_{\perp},e_i) +g(\nabla^0_{e^*_n}X^\epsilon_{\perp},e^*_n) +\big(h_\epsilon(\nabla^\epsilon_{e_n}X^\epsilon_{\perp},e_n)-h_\epsilon(\nabla^\epsilon_{e^*_n}X^\epsilon_{\perp},e^*_n)\big)\\
=& \divergence^0_{S_\perp} X^\epsilon_{\perp}+ \Upsilon(\epsilon,x,S,X), 
\end{split}
\end{align}
where $\Upsilon(.)$ is a real function converging uniformly to $0$ as $|h_\epsilon(e_n,\frac{\partial}{\partial s})| \to 0$. 
The third equality comes from (\ref{bydef}). For the last equality we used the fact that $\|\nabla^\epsilon X^\epsilon_{\perp}\|_{h_\epsilon}$ is bounded uniformly in $\epsilon>0$: indeed recall from Subsection \ref{construction} that $h_\epsilon$ is obtained by stretching $g$ only in the $\frac{\partial}{\partial t}$ direction.

Now by the fact that $S_\infty=\spt(V_\infty)\subset U$ and (\ref{consequence1'}), we see that $V'_\infty$ is entirely supported on $\{(x,S); x\in \partial U, S=T_x\partial U\}$ as a Radon measure. For any $\hat{a}>0$, let  $\epsilon_k>0$ and $\hat{b}>0$ be small enough so that (\ref{consequence1}) is true with $\hat{r}=\hat{r}_k$ and $|\Upsilon(b)|\leq \hat{a}$. Then thanks to (\ref{long}) we can write that for any $k$ large enough:
\begin{align*}
& \big|\int \divergence^{\epsilon_k}_S X^{\epsilon_k}_{\perp} dV'_k(x,S))  - \int \divergence^0 X^0_\perp dV'_\infty\big| \\
 &=   \big|\int \chi_{\{|h_{\epsilon_k}(e_n,\frac{\partial}{\partial s})|\leq \hat{b}\}}\divergence^{\epsilon_k}_S X^{\epsilon_k}_{\perp} dV'_k(x,S)+  \int \chi_{\{|h_{\epsilon_k}(e_n,\frac{\partial}{\partial s})|> \hat{b}\}}\divergence^{\epsilon_k} X^{\epsilon_k}_{\perp} dV'_k- \int \divergence^0 X^0_{\perp} dV'_\infty\big|\\
  & \leq   \big|\int \chi_{\{|h_{\epsilon_k}(e_n,\frac{\partial}{\partial s})|\leq \hat{b}\}}\divergence^{\epsilon_k}_S X^{\epsilon_k}_{\perp} dV'_k(x,S)- \int \divergence^0 X^0_{\perp} dV'_\infty\big| + C\hat{a}\\
& \leq     \big|\int \chi_{\{|h_{\epsilon_k}(e_n,\frac{\partial}{\partial s})|\leq \hat{b}\}}\divergence^0_{S_\perp} X^{\epsilon_k}_{\perp} dV'_k(x,S)- \int \divergence^0 X^0_{\perp} dV'_\infty\big| +2 C\hat{a}\\
 &\leq \big|\int \divergence^0_{S_\perp} X^{\epsilon_k}_{\perp} dV'_k(x,S)- \int \divergence^0_{S_\perp} X^0_{\perp} dV'_\infty(x,S)\big| + 3C\hat{a}\\
&\leq 4C\hat{a}.
\end{align*}
Here $C$ is a constant depending on $X$ but neither on ${\epsilon_k}$ nor on $\hat{a},\hat{b}$. The last inequality can be seen in the chart $\theta_\epsilon$ of Lemma \ref{norm}: in these coordinates, it is clear that the divergence term $\divergence^0_{S_\perp} X^{\epsilon_k}_{\perp}$ would be unchanged if computed with the original metric $g$. Hence we just finished the proof  of (\ref{separately1}).

\vspace{1em}
\textbf{Step 4b.}
Next we study $X^0_\parallel$. Write $X^\epsilon_\parallel = \varphi_\epsilon \frac{\partial}{\partial t}$ where $\varphi_\epsilon$ is a smooth function on $U_\epsilon$ bounded independently of $\epsilon$. Then we define the vector field
$$Z^\epsilon:=\varphi_\epsilon \frac{\partial}{\partial s},$$
that is, we rescale $X^\epsilon_\parallel$ by $\vartheta_\epsilon^{-1}$ so that $Z^\epsilon$ is bounded for $h_\epsilon$. This particular definition of $Z^\epsilon$ is not essential, what counts is that 
\begin{itemize}
\item $Z^\epsilon$ is a multiple of $\frac{\partial}{\partial s}$,
\item $Z^\epsilon=X^\epsilon_\parallel$ on $U_\epsilon\backslash\big(\gamma_\epsilon(\partial U_\epsilon \times [0,\delta_\epsilon])\big)$ and
\item $\|\nabla^\epsilon Z^\epsilon\|_{h_\epsilon}$ is bounded uniformly in $\epsilon>0$.
\end{itemize}
Again to see why the third bullet is true, recall that $h_\epsilon$ is obtained by stretching $g$ in the $\frac{\partial}{\partial t}$ direction:
in particular  the differential of $\varphi_\epsilon$ with respect to $h_\epsilon$ is uniformly bounded and so is the covariant derivative of $Z^\epsilon$. Let $H^\epsilon$ be defined as before. For all $(x,S)$ such that $x\in A_k$,
\begin{align}\label{long'}
\begin{split}
\divergence^\epsilon_S Z^\epsilon
= & \sum_{i=1}^n h_\epsilon(\nabla^\epsilon_{e_i}Z^\epsilon,e_i) \\
=& \sum_{i=1}^{n-1} h_\epsilon(\nabla^\epsilon_{e_i}Z^\epsilon,e_i) +h_\epsilon(\nabla^\epsilon_{e^*_n}Z^\epsilon,e^*_n) +\big(h_\epsilon(\nabla^\epsilon_{e_n}Z^\epsilon,e_n)-h_\epsilon(\nabla^\epsilon_{e^*_n}Z^\epsilon,e^*_n)\big)\\
=& \sum_{i=1}^{n-1} h_\epsilon(\nabla^\epsilon_{e_i}\varphi_\epsilon \frac{\partial}{\partial s},e_i) +h_\epsilon(\nabla^\epsilon_{e^*_n}\varphi_\epsilon \frac{\partial}{\partial s},e^*_n) +\big(h_\epsilon(\nabla^\epsilon_{e_n}Z^\epsilon,e_n)-h_\epsilon(\nabla^\epsilon_{e^*_n}Z^\epsilon,e^*_n)\big)\\
=& \varphi_\epsilon H^\epsilon + \big(h_\epsilon(\nabla^\epsilon_{e_n}Z^\epsilon,e_n)-h_\epsilon(\nabla^\epsilon_{e^*_n}Z^\epsilon,e^*_n)\big)\\
=& \Upsilon'(\epsilon,x,S,X), 
\end{split}
\end{align}
where $\Upsilon'(.)$ is a real function converging to $0$ uniformly as $|h_\epsilon(e_n,\frac{\partial}{\partial s})| \to 0$ and $x\in A_k$. The last equality used that the mean curvature $H^\epsilon$ goes uniformly to $0$ by Lemma \ref{meancurv} and that $\|\nabla^\epsilon Z^\epsilon\|_{h_\epsilon}$ is bounded uniformly in $\epsilon>0$. For any $\hat{a}>0$, let  $\epsilon_k>0$ and $\hat{b}>0$ be small enough so that (\ref{consequence1}) is true with $\hat{r}=\hat{r}_k$ and $|\Upsilon'(b)|\leq \hat{a}$. The computation (\ref{long'}) and the fact that $Z^\epsilon=X^\epsilon_\parallel$ on $U_\epsilon\backslash\big(\gamma_\epsilon(\partial U_\epsilon \times [0,\delta_\epsilon])\big)$ imply that for $k$ large:
\begin{align*}
 \big|\int \divergence^{\epsilon_k} X^{\epsilon_k}_\parallel dV''_k \big|
& = \big| \int \divergence^{\epsilon_k}Z^{\epsilon_k} dV''_k\big|\\ 
& = \big|\int \divergence^{\epsilon_k}Z^{\epsilon_k} dV'_k\big|\\
& \leq \big|\int \chi_{\{|h_{\epsilon_k}(e_n,\frac{\partial}{\partial s})|\leq \hat{b}\}}\divergence^{\epsilon_k}_S Z^{\epsilon_k} dV'_k(x,S)  \big| + C\hat{a}\\
& \leq 2C\hat{a}.
\end{align*}
As previously, $C$ is a constant depending on $X$ but neither on ${\epsilon_k}$ nor on $\hat{a},\hat{b}$. The second equality above comes from the stationarity of $V_k$ for $h_{\epsilon_k}$. Thus combining with (\ref{limit1}) we obtain
$$\int \divergence^{0} X^{0}_\parallel dV''_\infty= \lim_{k\to \infty}\int \divergence^{\epsilon_k} X^{0}_\parallel dV''_k =  \lim_{k\to \infty}\int \divergence^{\epsilon_k} X^{\epsilon_k}_\parallel dV''_k =0$$
since $X^\epsilon_\parallel$ converges smoothly to $X^0_\parallel$. On the other hand taking $\epsilon\to 0$, (\ref{consequence1'})
and the minimality of $\partial U$ imply that 
$$\int \divergence^{0} X^{0}_\parallel dV'_\infty=0.$$
Finally we conclude that 
$$\int \divergence^{0} X^{0}_\parallel dV_\infty=\int \divergence^{0} X^{0}_\parallel dV'_\infty+\int \divergence^{0} X^{0}_\parallel dV''_\infty=0$$
and (\ref{separately2}) follows. Together with (\ref{separately1}), this proves (\ref{stat}), which ends the proof as explained in Step 2 and Step 3.

\end{proof}

\section{Proof of Theorem \ref{Yau}}

Let $(M,g)$ be a connected closed Riemannian manifold. In this section, all the minimal hypersurfaces considered are closed and smoothly embedded. We say that a two-sided minimal hypersurface is \textit{degenerate} if its Jacobi operator has a non-trivial kernel. If such a hypersurface is degenerate and stable, then the kernel of its Jacobi operator is spanned by a positive eigenfunction. Note that for a two-sided minimal hypersurface which is either unstable or non-degenerate stable, it is well-known that the hypersurface has a neighborhood foliated by closed leaves which, when not equal to the minimal hypersurface itself, have non-zero mean curvature vector. Actually a similar result is true for degenerate stable minimal hypersurfaces, as the following lemma shows.

\begin{lemme} \label{cmcfol}
Let $\Gamma$ be a two-sided degenerate stable minimal hypersurface in $(M,g)$ and $\nu$ a choice of unit normal vector on $\Gamma$. Then there exist a positive number $\delta_1$ and a smooth map $w:\Gamma \times (-\delta_1,\delta_1) \to \mathbb{R}$ with the following properties:
\begin{enumerate}
\item for each $x\in \Gamma$, we have $w(x,0)=0$ and $\phi_0:=\frac{\partial}{\partial t}w(x,t)|_{t=0} $ is a positive function in the kernel of the Jacobi operator of $\Gamma$,
\item for each $t\in(-\delta_1,\delta_1)$, we have $\int_\Gamma (w(.,t)-t\phi_0)\phi_0 =0 $,
\item for each $t\in(-\delta_1,\delta_1)$, the mean curvature of the hypersurface 
$$\{\exp(x,w(x,t) \nu(x)) ; x\in \Gamma\}$$
is either positive or negative or identically zero.
\end{enumerate}
\end{lemme}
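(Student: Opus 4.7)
The plan is a Lyapunov--Schmidt reduction of the mean curvature operator on normal graphs over $\Gamma$. Since $\Gamma$ is stable, the first eigenvalue of the Jacobi operator $-L=-\Delta_\Gamma-|A|^2-\Ric(\nu,\nu)$ is nonnegative, and degeneracy forces it to be $0$. Assuming $\Gamma$ connected (otherwise one argues componentwise), the standard Courant nodal-domain argument yields a positive $L^2$-normalized first eigenfunction $\phi_0>0$, so $\ker L=\mathbb{R}\phi_0$.

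For small $u\in C^{2,\alpha}(\Gamma)$, let $F(u)\in C^{0,\alpha}(\Gamma)$ denote the mean curvature of the normal graph $\{\exp(x,u(x)\nu(x)):x\in\Gamma\}$. Then $F(0)=0$ and $dF_0=L$. Decompose $C^{k,\alpha}(\Gamma)=\mathbb{R}\phi_0\oplus K^{k,\alpha}$ with $K^{k,\alpha}=\{v\in C^{k,\alpha}:\int_\Gamma v\,\phi_0=0\}$, and let $Q$ be the $L^2$-orthogonal projection onto $K^{0,\alpha}$. Self-adjointness of $L$ together with $\ker L=\mathbb{R}\phi_0$ make $L\colon K^{2,\alpha}\to K^{0,\alpha}$ a Banach space isomorphism. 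Apply the implicit function theorem to
$$\Psi\colon (-\delta,\delta)\times K^{2,\alpha}\to K^{0,\alpha},\qquad \Psi(t,v):=QF(t\phi_0+v).$$
Since $\Psi(0,0)=0$ and $\partial_v\Psi|_{(0,0)}=L|_{K^{2,\alpha}}$ is invertible, there exist $\delta_1>0$ and a smooth curve $t\mapsto v(t)\in K^{2,\alpha}$ with $v(0)=0$ and $\Psi(t,v(t))\equiv 0$. Differentiating at $t=0$ gives $Lv'(0)=0$; combined with $v'(0)\in K^{2,\alpha}$ this forces $v'(0)=0$.

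Set $w(x,t):=t\phi_0(x)+v(t)(x)$. Properties (1) and (2) are then immediate: $w(\cdot,0)=0$, $\partial_t w|_{t=0}=\phi_0$ is a positive element of $\ker L$, and $\int_\Gamma(w(\cdot,t)-t\phi_0)\phi_0=\int_\Gamma v(t)\,\phi_0=0$ because $v(t)\in K^{2,\alpha}$. For (3), the identity $QF(w(\cdot,t))=0$ forces $F(w(\cdot,t))=\lambda(t)\phi_0$ for some smooth $\lambda\colon(-\delta_1,\delta_1)\to\mathbb{R}$, so the mean curvature of the $t$-th graph equals $\lambda(t)\phi_0(x)$ and inherits a pointwise definite sign from the positivity of $\phi_0$: it is everywhere positive, everywhere negative, or identically zero according to the sign of $\lambda(t)$. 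Joint smoothness of $w$ in $(x,t)$ follows by running the implicit function theorem in $C^{k,\alpha}$ for every $k$ and invoking uniqueness plus elliptic regularity for the nonlinear PDE $F(t\phi_0+v(t))=\lambda(t)\phi_0$.

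The main technical point is knowing to solve the reduced equation $QF=0$ rather than $F=0$: degeneracy kills the naive implicit function argument, but the projected equation remains nondegenerate, and the unprojected residual automatically lies in the one-dimensional subspace $\mathbb{R}\phi_0$ spanned by a positive function, yielding property (3) for free.
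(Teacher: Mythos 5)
Your proof is correct and takes essentially the same approach as the paper: the paper simply cites Proposition 5 of Bray--Brendle--Neves and notes that it is an application of the implicit function theorem, which is precisely the Lyapunov--Schnidt reduction you carry out in detail (project away the one-dimensional kernel $\mathbb{R}\phi_0$, solve $QF=0$ by the implicit function theorem, and observe that the residual $F(w(\cdot,t))$ lands in $\mathbb{R}\phi_0$, giving the signed mean curvature). Your write-up is a valid expansion of the argument the paper refers to.
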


\begin{proof}
The argument was already used in the proof of \cite[Proposition 5]{BrayBrendleNeves}. It is an application of the implicit function theorem. The mean curvature of $\{\exp(x,w(x,t) \nu(x)) ; x\in \Gamma\}$ is proportional to the positive eigenfunction $\phi_0$, which explains why either it has a sign or it is identically zero.
\end{proof}

If the minimal hypersurface $\Gamma$ is one-sided $(M,g)$, one can still apply the previous discussion (in particular Lemma \ref{cmcfol}) in a double-cover of $M$ where $\Gamma$ lifts to a two-sided hypersurface.

Let $S$ be a minimal hypersurface embedded in a compact $(n+1)$-dimensional compact manifold $\hat{M}$ (possibly with boundary). Let $\mu>0$, consider a neighborhood $\mathcal{N}$ of $S$ and a diffeomorphism 
$$F: \{x\in \hat{M}; d(x,S)\leq \mu\} \to \mathcal{N}$$
such that $F(x)=x$ for $x\in S$. We define the following:
\begin{itemize}
\item $S$ has a \textit{contracting neighborhood} if there are such $\mu$, $\mathcal{N}$ and $F$ such that for all $t\in (0,\mu]$, $F(\{x\in \hat{M}; d(x,S)=t\})$ has mean curvature vector pointing towards $S$,
\item$S$ has a \textit{expanding neighborhood} if there are such $\mu$, $\mathcal{N}$ and $F$ such that for all $t\in (0,\mu]$, $F(\{x\in \hat{M}; d(x,S)=t\})$ has mean curvature vector pointing away from $S$,
\item$S$ has a \textit{mixed neighborhood} if $S$ is two-sided, contained in the interior of $\hat{M}$ and there are such $\mu$, $\mathcal{N}$ and $F$ such that for all $t\in [-\mu,0)$ (resp. $t\in(0,\mu]$), $F(\{x\in \hat{M}; d_{signed}(x,S)=t\})$ has mean curvature vector pointing towards (resp. pointing away from) $S$. Here $d_{signed}$ is a choice of signed distance locally around $S$.
\end{itemize}
By the first variation formula, it is clear that if $S$ has a contracting (resp. expanding) neighborhood, the slices $F(\{x\in \hat{M}; d(x,S)=t\})$ have larger (resp. smaller) $n$-volume than $S$.

Given $(M,g)$ we will need to cut $M$ along some of its minimal hypersurfaces and get a compact manifold with boundary. Let $\Gamma$ be a connected minimal hypersurface in $M$. There are three possibilities: $\Gamma$ is either two-sided separating, or two-sided non-separating, or one-sided. In the first case, we cut $M$ along $\Gamma$, then choose any one of the two components $M\backslash \Gamma$: its metric completion is a compact manifold whose boundary has one connected component. In the second case, we cut $M$ along $\Gamma$ and obtain by completion a compact manifold whose boundary has two connected components. In the third case, we consider the metric completion of the complement of $\Gamma$ in $M$: it is a compact manifold whose boundary has one component. In all cases, let $M_1$ be the compact manifold we get. It is naturally endowed with a metric still denoted by $g$. We can repeat this procedure for any minimal hypersurface $\Gamma_1\subset (\interior{M_1},g)$, obtain $M_2$ and go on. Thus we construct a finite sequence $(M,g)$, $(M_1,g)$,... $(M_J,g)$ by successive cuts. Note that no two of them are isometric: each cut either adds new boundary components or strictly reduces the $(n+1)$-volume. We will say that a compact manifold $U$ \textit{is obtained by cutting $M$ along minimal hypersurfaces} if $U$ is the manifold obtained after a positive finite number of steps we just described.


\begin{lemme} \label{disjoint}
Let $(N,g')$ be a connected compact manifold, possibly with boundary. Suppose that the boundary, if non-empty, has a contracting neighborhood and that any minimal hypersurface in $N$ has a neighborhood which is either contracting or expanding or mixed. Then the following is true.
\begin{enumerate}
\item If the interior of $(N,g')$ contains two disjoint connected minimal hypersurfaces, then the interior of $N$ contains a minimal hypersurface with a contracting or mixed neighborhood.
\item If the interior of $(N,g')$ contains a minimal hypersurface with a contracting or mixed neighborhood, then one can cut $N$ along some minimal hypersurfaces and get a different manifold $(N',g')$ such that $\partial N'$ has a contracting neighborhood in $N'$.
\end{enumerate}
\end{lemme}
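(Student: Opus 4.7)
My plan is to prove both parts of the lemma by area-minimization arguments, using the trichotomy hypothesis to control CMC foliations near each minimal hypersurface and produce suitable barriers. For (1) I argue by contradiction: assume every minimal hypersurface in $\interior(N)$ has only an expanding neighborhood. Given disjoint connected minimal hypersurfaces $\Gamma_1,\Gamma_2\subset\interior(N)$, choose a connected component $R$ of $N\setminus(\Gamma_1\cup\Gamma_2)$ whose closure meets both (such an $R$ exists by following a path joining the two hypersurfaces). On the $R$-side of each $\Gamma_i$, the expanding neighborhood supplies a foliation whose leaves have mean-curvature vector pointing into $R$; pushing such a leaf slightly into $R$ produces a strict mean-concave barrier $L_i$ separating $\Gamma_i$ from the rest of $R$. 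Apply standard geometric-measure-theoretic area minimization in a nontrivial $\mathbb{Z}_2$-homology class of $n$-cycles that must cross between $L_1$ and $L_2$; using the regularity available in dimension $n+1\le 7$, this yields a smooth stable embedded minimal hypersurface $\Sigma$ trapped strictly between $L_1$ and $L_2$, hence disjoint from $\Gamma_1,\Gamma_2$, and staying in $\interior(N)$ (the homology class is chosen to be nonzero in $H_n(N;\mathbb{Z}_2)$ while $[\partial N]=0$, preventing the minimizer from drifting to $\partial N$, which is contracting and therefore not an interior barrier). Being a minimizer, $\Sigma$ admits no nearby competitor in its class of strictly smaller area, so under the trichotomy the neighborhood of $\Sigma$ must be contracting or mixed, contradicting the assumption.

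For (2), I do a case analysis on the type of $\Gamma$. If $\Gamma$ has a contracting neighborhood, I cut $N$ along $\Gamma$: in the two-sided separating case I keep either component, in the two-sided non-separating case I take the unique cut manifold, and in the one-sided case I take the metric completion of $N\setminus\Gamma$, whose new boundary is the orientation double cover of $\Gamma$. In each subcase the new boundary component(s) inherit the contracting neighborhood of $\Gamma$ on the appropriate side, while $\partial N$ remains contracting by hypothesis, so $\partial N'$ is fully contracting. If $\Gamma$ has a mixed neighborhood and is separating, I cut along $\Gamma$ and keep the contracting side, again obtaining a fully contracting boundary.

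The delicate subcase is $\Gamma$ two-sided non-separating with mixed neighborhood. Here $[\Gamma]\neq 0$ in $H_n(N;\mathbb{Z}_2)$ because $\Gamma$ is non-separating, while $[\partial N]=0$; pushing $\Gamma$ slightly onto its expanding side produces a homologous surface of strictly smaller area, so $\Gamma$ is not a mass minimizer in $[\Gamma]$. Minimizing mass in $[\Gamma]$ by geometric measure theory (and regularity in dimension $n+1\le 7$) yields a smooth stable embedded minimal hypersurface $\Gamma_0$ with $|\Gamma_0|<|\Gamma|$. Since $[\partial N]=0\neq[\Gamma_0]$, no component of $\Gamma_0$ coincides with $\partial N$, so $\Gamma_0\subset\interior(N)$. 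Each component of $\Gamma_0$ is a strict minimizer in its homology class, hence has a contracting neighborhood on both sides. Cutting $N$ successively along the components of $\Gamma_0$ (which are two-sided and pairwise disjoint) produces the desired $N'$ with boundary entirely contracting. The main obstacle throughout is ensuring the minimizers remain in $\interior(N)$ even though $\partial N$, being contracting, does not serve as a barrier from the interior; this is resolved by choosing the minimization class to be homologically nonzero while $[\partial N]=0$, thereby forcing any mass-minimizer to avoid $\partial N$.
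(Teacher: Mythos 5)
Your overall strategy is the same as the paper's: use area minimization together with the trichotomy to produce a locally area-minimizing (hence contracting) hypersurface in $\interior(N)$, then cut along it. However, the specific homological claims you invoke to keep the minimizer away from $\partial N$ have gaps, and the paper sidesteps them by working in the \emph{cut} manifold rather than in $N$ itself.

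For (1), you minimize area in ``a nontrivial $\mathbb{Z}_2$-homology class of $n$-cycles that must cross between $L_1$ and $L_2$,'' chosen nonzero in $H_n(N;\mathbb{Z}_2)$. But such a class need not exist: if, say, $N=S^3$ and $\Gamma_1,\Gamma_2$ are disjoint null-homologous spheres, then $H_n(N;\mathbb{Z}_2)=0$ and your class is vacuous. The paper instead cuts $N$ along $\Gamma,\Gamma'$ to get a component $N''$ with at least two new boundary components, and minimizes the class $[S_0]\in H_n(N'';\mathbb{Z}_2)$ of one of them. That class is automatically nonzero in $N''$ (it is a proper nonempty subset of $\partial N''$), and the expanding sides of the $\Gamma,\Gamma'$ pieces of $\partial N''$ provide the barriers; one then shows the minimizer cannot be all of $\partial N''$ minus $S_0$, because the latter contains another $\Gamma$- or $\Gamma'$-piece that could be pushed inward. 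Minimizing in $N$ directly does not give you this structure.

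For (2) in the two-sided non-separating mixed case, the step ``Since $[\partial N]=0\neq[\Gamma_0]$, no component of $\Gamma_0$ coincides with $\partial N$, so $\Gamma_0\subset\interior(N)$'' is not justified. The condition $[\partial N]=0\neq[\Gamma_0]$ only rules out $\Gamma_0=\partial N$ in its entirety; individual components of $\partial N$ can carry nonzero classes in $H_n(N;\mathbb{Z}_2)$ (e.g., each boundary sphere of $T^2\times S^1$ minus two disjoint balls), so the minimizer may very well pick up some but not all components of $\partial N$, in which case $\Gamma_0\not\subset\interior(N)$. What one can prove, and what is actually needed, is that $\Gamma_0$ has \emph{at least one} component in $\interior(N)$, and the proof is topological: if $\Gamma_0\subset\partial N$ with $[\Gamma_0]=[\Gamma]$, then $\Gamma_0+\Gamma=\partial W$, but $N\setminus(\Gamma_0\cup\Gamma)$ is connected ($\Gamma$ non-separating and $\Gamma_0\subset\partial N$), so $W$ is $0$ or $[N]$, forcing $\Gamma_0=\Gamma$ or $\Gamma_0+\Gamma=\partial N$, both contradicting that $\Gamma$ is interior. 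The paper avoids this subtlety entirely by cutting along $\Gamma$ first to get $\hat N$ and minimizing the class of a new boundary component $S_3$ inside $\hat N$, where the same connectedness argument immediately shows the minimizer cannot lie in $\partial\hat N$.

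In short: the approach is correct in spirit and matches the paper's, but you should (i) minimize the homology class of a boundary component inside the cut manifold (or equivalently in the region trapped between your barriers), not a hoped-for nonzero class in $H_n(N;\mathbb{Z}_2)$, and (ii) only claim an interior component of the minimizer, justified by a connectedness argument, rather than full interiority from $[\partial N]=0$.
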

\begin{proof} 
First we prove $(1)$: let $\Gamma$, $\Gamma'$ be two disjoint minimal hypersurfaces. We can suppose that both $\Gamma$ and $\Gamma'$ have an expanding neighborhood. We cut $N$ along $\Gamma$, $\Gamma'$ and choose a component of $N \backslash (\Gamma \cup \Gamma')$ that has at least two different new boundary components coming from $\Gamma$, $\Gamma'$. We call this new manifold $N''$. Let $S_0$ be a component coming from $\Gamma$. We minimize its $n$-volume in its homological class inside of $N''$. By standard geometric measure theory and the maximum principle, we get a smooth minimal hypersurface and one of its component $S_1$ is two-sided and contained inside the interior of $N''$. The hypersurface $S_1$ was obtained by a minimization procedure so it is in particular $n$-volume minimizing under small smooth deformations. Hence it has a contracting neighborhood.

To prove (2), let $\Gamma$ be a connected minimal hypersurface with a contracting or mixed neighborhood, inside the interior of $N$. In the first case, we just cut along $\Gamma$ and obtain the desired $N'$. If $\Gamma$ has a mixed neighborhood, then by definition it is two-sided. When $\Gamma$ is separating $N$ into two components, we cut along $\Gamma$ and one of the components has a boundary with contracting neighborhood as desired. Suppose now that $\Gamma$ is not separating. Then once again we cut along $\Gamma$ and get a manifold $\hat{N}$ but following that, we minimize in its homological class the $n$-volume of one of the two new boundary components coming from $\Gamma$. Let us call that component $S_2 \subset \partial \hat{N}$, and assume it is the one with an expanding neighborhood. The smooth minimal hypersurface  resulting from minimizing the $n$-volume in the homological class of $S_2$ has $n$-volume strictly less than $S_2$.
Thus necessarily, that minimal hypersurface has a component $S_3$ inside the interior of $\hat{N}$. As in the previous paragraph, $S_3$ can be chosen two-sided and with a contracting neighborhood. We can cut $N$ along this hypersurface $S_3$, and throw away a connected component if necessary, in order to get the desired $N'$.

\end{proof}

Next, suppose by contradiction that $(M,g)$ only contains finitely many minimal hypersurfaces. Then by Lemma \ref{cmcfol} each one of them has either a contracting neighborhood or an expanding neighborhood or a mixed neighborhood. Let us cut $M$ along minimal hypersurfaces a certain number of times in such a way that we get a new manifold $U$ whose boundary, if not empty, has a contracting neighborhood. We choose $U$ so that the number of cuts is maximal: it is possible since the cuts are realized along minimal hypersurfaces, which are in finite number in $M$. By Lemma \ref{disjoint}, we know that all minimal hypersurfaces embedded in the interior of $U$ have an expanding neighborhood and any two such hypersurfaces intersect. By the work of Marques and Neves \cite{MaNeinfinity}, the manifold $(M,g)$ also necessarily contains at least two disjoint minimal hypersurfaces, otherwise it would contain infinitely many minimal hypersurfaces. Thus we deduce from Lemma \ref{disjoint} that $U$ is not equal to $M$ or in other words the boundary $\partial U $ is not empty.

The following lemma is true in general.          
\begin{lemme} \label{comparison}
Let $(\tilde{U},\tilde{g})$ be a connected compact manifold such that $\partial \tilde{U}$ is a minimal hypersurface with a contracting neighborhood. Let $\tilde{\Sigma}_1, \dots, \tilde{\Sigma}_q$ be the connected components of $\partial \tilde{U}$. Assume that every minimal hypersurface in the interior of $\tilde{U}$ has an expanding neighborhood and that any two of them intersect. Then for any minimal hypersurface $\Gamma$ in the interior of $\tilde{U}$:
\begin{enumerate}
\item if $\Gamma$ is two-sided,
$$\mathcal{H}^n(\Gamma)> \max\{\mathcal{H}^n(\tilde{\Sigma}_1),\dots,\mathcal{H}^n(\tilde{\Sigma}_q)\},$$
\item if $\Gamma$ is one-sided,
$$2\mathcal{H}^n(\Gamma)> \max\{\mathcal{H}^n(\tilde{\Sigma}_1),\dots,\mathcal{H}^n(\tilde{\Sigma}_q)\}.$$
\end{enumerate}
\end{lemme}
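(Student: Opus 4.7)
The plan is to prove (1) by area minimization in a suitable integer homology class inside a component of $\tilde U\setminus\Gamma$, and then to deduce (2) by passing to a $2$-fold cover. First I would handle the two-sided separating sub-case: let $\bar A$ denote the closure of the component of $\tilde U\setminus\Gamma$ containing $\tilde\Sigma_i$, so $\partial\bar A=\Gamma\cup\tilde\Sigma_i\cup\tilde\Sigma_{i_2}\cup\cdots\cup\tilde\Sigma_{i_r}$. Using the expanding foliation of $\Gamma$ on the $\bar A$-side I would push $\Gamma$ slightly inward to obtain a smooth closed hypersurface $\Gamma_\delta\subset\interior\bar A$ with $\mathcal{H}^n(\Gamma_\delta)<\mathcal{H}^n(\Gamma)$ and $[\Gamma_\delta]=[\Gamma]\in H_n(\bar A;\mathbb{Z})$. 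Standard regularity of area-minimizing integer rectifiable currents in dimension $n+1\le 7$ then produces a smooth minimal hypersurface $T$ (with integer multiplicities) realizing the mass infimum in the class $[\Gamma]$; in particular $\mathbf{M}(T)\le\mathcal{H}^n(\Gamma_\delta)<\mathcal{H}^n(\Gamma)$.

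Next I would show $T$ is supported on $\partial\bar A\setminus\Gamma$ by two localization claims. First, $T$ is disjoint from $\Gamma$: otherwise the maximum principle for minimal hypersurfaces would force a component of $T$ to coincide with $\Gamma$, which one could then replace by a nearby parallel leaf of strictly smaller area using the expanding foliation, contradicting minimality. Second, $T$ has no component in $\interior\bar A$: such a component would be a closed stable minimal hypersurface in $\interior\tilde U$, yet the expanding-neighborhood hypothesis applied as $t\to 0^+$ forces the pointwise inequality $|A|^2+\Ric(\nu,\nu)>0$, so testing the Jacobi quadratic form against $\phi\equiv 1$ gives $Q(1,1)=-\int(|A|^2+\Ric(\nu,\nu))<0$, contradicting stability. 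Writing $T=n_1\tilde\Sigma_i+\sum_{j\ge 2}n_j\tilde\Sigma_{i_j}$ with $n_\ell\in\mathbb{Z}$, I would fix a smooth arc $\gamma$ transverse to $\partial\bar A$ joining a point of $\tilde\Sigma_i$ to a point of $\Gamma$ and avoiding every other boundary component. The intersection pairing $H_1(\bar A,\partial\bar A;\mathbb{Z})\otimes H_n(\bar A;\mathbb{Z})\to\mathbb{Z}$ gives $[\gamma]\cdot[\tilde\Sigma_i]=\pm 1$, $[\gamma]\cdot[\tilde\Sigma_{i_j}]=0$ for $j\ge 2$, and $[\gamma]\cdot[\Gamma]=\pm 1$; since $[T]=[\Gamma]$ this forces $|n_1|=1$, hence $\mathbf{M}(T)\ge\mathcal{H}^n(\tilde\Sigma_i)$, and combining with the upper bound yields $\mathcal{H}^n(\tilde\Sigma_i)<\mathcal{H}^n(\Gamma)$.

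For the two-sided non-separating case I would first cut $\tilde U$ along $\Gamma$ to obtain a manifold with two additional boundary components $\Gamma_\pm$ and run the same argument. For the one-sided case, I would pass to the $2$-fold cover of $\tilde U$ in which $\Gamma$ lifts to a two-sided $\hat\Gamma$ with $\mathcal{H}^n(\hat\Gamma)=2\mathcal{H}^n(\Gamma)$; each boundary component $\tilde\Sigma_j$ has a lift of area at least $\mathcal{H}^n(\tilde\Sigma_j)$, the expanding and contracting neighborhood hypotheses transfer to the cover, and applying (1) upstairs yields (2).

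The delicate step is the second localization claim, namely extracting Jacobi-instability from the purely geometric "expanding neighborhood" condition. One must argue that the mean-curvature-away condition for parallel leaves at every positive distance forces the strict pointwise inequality $|A|^2+\Ric(\nu,\nu)>0$, strong enough to defeat stability through the constant normal variation; should the expanding condition produce only a non-strict inequality, one would rule out a degenerate stable interior component via a small metric perturbation or a limiting argument in the spirit of Lemma \ref{cmcfol}. The remaining ingredients---the maximum-principle barrier near $\Gamma$, the regularity of area-minimizers in dimension at most $7$, and the intersection-pairing computation---are standard.
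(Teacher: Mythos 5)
Your localization of the minimizer to $\partial\bar A\setminus\Gamma$ is where the argument breaks, and it breaks precisely because you never invoke the Frankel-type hypothesis that \emph{any two minimal hypersurfaces in the interior of $\tilde U$ intersect}. You try to exclude an interior component of $T$ via a curvature estimate, claiming the expanding-neighborhood hypothesis forces the pointwise bound $|A|^2+\Ric(\nu,\nu)>0$. This does not follow: the expanding condition only concerns the mean curvature of nearby leaves at positive distance, the foliation need not be the normal parallel one (cf.\ Lemma \ref{cmcfol}, where the normal speed is an eigenfunction $\phi_0$, not the constant), and in the degenerate stable case the first-order term $L\phi_0$ vanishes while higher-order terms still produce an expanding foliation. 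So a degenerate stable minimal hypersurface in $\interior\tilde U$ with an expanding neighborhood is perfectly consistent with the hypotheses, at best you get $Q(1,1)\le 0$, and the proposed repairs (metric perturbation, limiting argument) do not obviously salvage this. The correct and much simpler argument, which the paper uses, is that an interior component of $T$ lies in $\interior\bar A\subset\interior\tilde U\setminus\Gamma$, hence is a minimal hypersurface in $\interior\tilde U$ disjoint from $\Gamma$, directly contradicting the intersection hypothesis.

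Two further points. For the two-sided non-separating sub-case, the paper shows this case \emph{cannot occur}: minimizing the homology class of a copy of $\Gamma$ in the metric completion $\mathbf{C}$ of $\tilde U\setminus\Gamma$ produces (via the maximum principle and the Frankel hypothesis) a minimal hypersurface in $\interior\tilde U$ disjoint from $\Gamma$, a contradiction. Your ``run the same argument after cutting'' is too vague and does not reach this. For the one-sided case, passing to a $2$-fold cover of $\tilde U$ is problematic because the Frankel hypothesis need not hold upstairs (two lifts can be disjoint while their projections intersect), so (1) cannot just be applied in the cover; the paper instead works directly with the metric completion of $\tilde U\setminus\Gamma$, whose unique new boundary component is a double cover of $\Gamma$ of area $2\mathcal{H}^n(\Gamma)$, and runs the same minimization there. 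Finally, your intersection-pairing computation is a valid but heavier route to the area bound; the paper simply notes that the minimizer, being supported in $\partial\mathbf{C}_1\setminus S$ and homologous to $S$, must equal all of $\partial\mathbf{C}_1\setminus S$.
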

\begin{proof}
We consider the metric completion $\mathbf{C}$ of $\tilde{U}\backslash \Gamma$. It has at least one new boundary component $S$ coming from $\Gamma$. The hypersurface $\Gamma$ cannot be two-sided and non-separating in $\tilde{U}$. Otherwise we minimize the $n$-volume of $S$ in its homological class in $\mathbf{C}$. By the maximum principle, one component of the resulting minimal hypersurface is contained in the interior of $\mathbf{C}$: in particular it does not intersect $\Gamma$ in $\tilde{U}$, which contradicts our assumption. Suppose now that $\Gamma$ is separating in $\tilde{U}$. Then $\mathbf{C}$ has two connected components $\mathbf{C}_1$, $\mathbf{C}_2$. Each of them has a boundary component isometric to $\Gamma$. Suppose that $S$ (resp. $S'$) is such a component in $\partial \mathbf{C}_1$ (resp. $\partial \mathbf{C}_2$). We minimize the $n$-volume of $S$ in its homological class in $\mathbf{C}_1$. We get a resulting minimal hypersurface $\tilde{S}\subset\mathbf{C}_1$. Since $\tilde{S}$ does not touch $S$ by the maximum principle and since the interior of $\mathbf{C}_1$ cannot contain minimal hypersurfaces, $\tilde{S}$ is contained in $\partial \mathbf{C}_1\backslash S$. It shares the same homology class with $S$, so actually 
$$\tilde{S} =\partial \mathbf{C}_1\backslash S \text{ and so } \mathcal{H}^n(\Gamma) > \mathcal{H}^n(\partial \mathbf{C}_1\backslash S).$$
The same argument applied to $S'\subset  \mathbf{C}_2$ gives $$\mathcal{H}^n(\Gamma) > \mathcal{H}^n(\partial \mathbf{C}_2\backslash S).$$
On the other hand, $\big(\partial \mathbf{C}_1\backslash S\big) \cup\big(\partial \mathbf{C}_2\backslash S' \big) \text{ is isometric to } \partial \tilde{U}$ so we readily obtain that  
$$\mathcal{H}^n(\Gamma) >\max\{\mathcal{H}^n(\tilde{\Sigma}_1),\dots,\mathcal{H}^n(\tilde{\Sigma}_q)\}.$$
In the case where $\Gamma$ is one-sided in $\tilde{U}$, we argue similarly but this time with the unique new boundary component $S$ of the connected manifold $\mathbf{C}$, and $S$ is isometric to a double cover of $\Gamma$. This proves the lemma.
\end{proof}

Since we are supposing by contradiction that $M$ contains finitely many minimal hypersurfaces, let $\Gamma_1,\dots,\Gamma_k$ be the minimal hypersurfaces contained in the interior of $U$. 
Let $\Sigma_1,\dots,\Sigma_q$ be the connected components of $\partial U$, which is non-empty as previously explained. Suppose that 
$$\mathcal{H}^n(\Sigma_1)\geq \mathcal{H}^n(\Sigma_j) \text{ for all } j\in\{1,\dots,q\}.$$
Then by Lemma \ref{comparison} and by the construction of $U$, for all $i\in \{1,\dots,k\}$ we have: 
\begin{align}\label{plusgrand}
\begin{split}
\mathcal{H}^n(\Gamma_i)& >\mathcal{H}^n(\Sigma_1) \text{ if $\Gamma_i$ is two-sided},\\
2\mathcal{H}^n(\Gamma_i) & >\mathcal{H}^n(\Sigma_1) \text{ if $\Gamma_i$ is one-sided}.
\end{split}
\end{align}

Let $\mathcal{C}(U)$ as constructed in Subsection \ref{construction}. By Theorem \ref{minmax} and since any two of the $\Gamma_i$ intersect, the widths $\omega_p=\omega_p(\mathcal{C}(U))$ are realized as an integer multiple of the $n$-volume of one of the hypersurfaces $\Gamma_1,\dots,\Gamma_k$:
\begin{equation}\label{realize}
\forall p, \exists i_p\in\{1,\dots,k\}, \exists m_{p}\in\{1,2,3,\dots\}, \quad \omega_p = m_{p} \mathcal{H}^n(\Gamma_{i_p}),
\end{equation}
and $m_{p}$ is even if $\Gamma_{i_p}$ is one-sided.
We would like to derive a contradiction from the asymptotic behavior of the widths $\omega_p$ described in Theorem \ref{asymptotic} and inequality (\ref{plusgrand}). The next lemma is an elementary arithmetic result.
\begin{lemme} \label{arithmetic}
Let $\alpha_1,\dots,\alpha_R$ be a collection of real numbers strictly larger than $1$. Consider a sequence of increasing positive real numbers $\{u_p\}_{p\geq 1}$ such that for all $p\geq1$:
\begin{itemize}
\item $u_1\geq1$, $u_{p+1}\geq u_p+1$ and
\item $u_p\in \{m\alpha_r; m\in\{1,2,3,\dots\}, r\in\{1,\dots,R\}\}.$
\end{itemize}
Then there exists an $\bar{\epsilon}_0=\bar{\epsilon}_0(\alpha_1,\dots,\alpha_R)>0$ such that for $p$ large enough,
$$u_p>(1+\bar{\epsilon}_0)p.$$

\end{lemme}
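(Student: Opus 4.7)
The plan is to establish the stronger estimate
\[
\#\{p\geq 1 : u_p \leq N\} \leq (1-c)N + O(1)
\]
for some constant $c = c(\alpha_1,\ldots,\alpha_R)>0$; once this is proved, setting $\bar\epsilon_0<c/(1-c)$ yields $u_p \geq (p-O(1))/(1-c) > (1+\bar\epsilon_0)p$ for large $p$, which is the claim.

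For the count estimate, I use a unit-interval covering argument. By the 1-separation $u_{p+1}\geq u_p+1$, each half-open integer interval $[k,k+1)$ with $k\in\mathbb{Z}_{\geq 0}$ contains at most one $u_p$; moreover $u_p \in [k,k+1)$ forces this interval to meet $S := \bigcup_{r=1}^R \alpha_r\mathbb{Z}_{>0}$. Setting
\[
\bar E := \big\{k\in\mathbb{Z}_{\geq 0} : S\cap[k,k+1)=\varnothing\big\},
\]
we have $\#\{p:u_p\leq N\} \leq N+1-|\bar E\cap[0,N]|$, so the entire lemma reduces to showing that $\bar E$ has strictly positive lower density. The seed is the observation that $0\in\bar E$, because every $\alpha_r>1$ prevents any positive multiple $m\alpha_r$ (with $m\geq 1$) from lying in $[0,1)$.

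More generally, a direct computation shows $k\geq 1$ belongs to $\bar E$ iff the orbit point $\phi(k) := (k/\alpha_1,\ldots,k/\alpha_R)\bmod\mathbb{Z}^R$ lies in the box $A := \prod_r (0,1-1/\alpha_r] \subset (\mathbb{R}/\mathbb{Z})^R$ (encoding, for each $r$, that $k$ is not a multiple of $\alpha_r$ and the next multiple is at least $k+1$). By Weyl's equidistribution theorem, $\{\phi(k)\}_{k\geq 0}$ equidistributes with respect to the normalized Haar measure $\mu_H$ on the closed subgroup $H\subseteq (\mathbb{R}/\mathbb{Z})^R$ generated by $\phi(1)=(1/\alpha_1,\ldots,1/\alpha_R)$, so the lower density of $\bar E$ equals $\mu_H(H\cap A)$. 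This is strictly positive: the point $-\phi(1)=(1-1/\alpha_1,\ldots,1-1/\alpha_R)\in H$ lies in $A$ (at the right endpoint of each factor), and a small relative neighborhood of $-\phi(1)$ in $H$ remains in $A$---when $H$ is positive-dimensional because $A$ locally contains a half-space in which $H$ has positive Haar measure, and when $H$ is finite because the singleton $\{-\phi(1)\}$ already contributes mass $1/|H|>0$. In the elementary case where all the $\alpha_r$ are rational, $\bar E$ is simply eventually periodic with $0\in\bar E$, giving positive density without invoking Weyl.

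The main (mild) obstacle is the standard technical step of verifying that $A$ is a $\mu_H$-continuity set---i.e.\ that its boundary has $\mu_H$-measure zero---which is handled by sandwiching $A$ between its open interior and its closure, whose $\mu_H$-measures agree in the generic situation; the proof then extracts any fixed $c>0$ strictly smaller than $\mu_H(H\cap A)$.
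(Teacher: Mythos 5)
Your reduction of the lemma to a lower bound on the density of $\bar E := \{k\in\mathbb{Z}_{\geq 0} : \mathcal A\cap[k,k+1)=\varnothing\}$ (where $\mathcal A=\bigcup_r\alpha_r\mathbb{Z}_{>0}$) is clean, and so is the characterization $k\in\bar E \iff \phi(k)\in A$ for $k\geq 1$. The fatal problem is the positivity claim $\mu_H(H\cap A)>0$: it is false in general, and so is the underlying assertion that a relative neighborhood of $-\phi(1)$ in $H$ stays in $A$. Take $R=2$, $\alpha_1=1+\sqrt 2$, $\alpha_2=1+\tfrac{\sqrt 2}{2}$, so that $1/\alpha_1=\sqrt 2-1$, $1/\alpha_2=2-\sqrt 2$, and $1/\alpha_1+1/\alpha_2=1$. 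For $k\geq 1$ the two fractional parts $\{k/\alpha_1\}$, $\{k/\alpha_2\}$ are irrational and sum to the integer $k$, hence their fractional parts sum to exactly $1$; the two membership constraints $\{k/\alpha_1\}\leq 2-\sqrt 2$ and $\{k/\alpha_2\}\leq\sqrt 2-1$ then force $\{k/\alpha_1\}=2-\sqrt 2$ exactly, i.e.\ $(k+1)\sqrt 2\in\mathbb{Z}$, which is impossible. Thus $\bar E=\{0\}$ and its density is zero. Geometrically, $H$ here is the antidiagonal circle $\{(t,1-t)\bmod\mathbb{Z}^2\}$, and its only point of intersection with the box $A=(0,2-\sqrt 2]\times(0,\sqrt 2-1]$ is the single corner $-\phi(1)=(2-\sqrt 2,\sqrt 2-1)$, so $\mu_H(H\cap A)=0$. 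Your claim that ``$A$ locally contains a half-space'' near $-\phi(1)$ is incorrect when $R\geq 2$: $-\phi(1)$ is the corner of $A$ where all $R$ constraints $x_r\leq 1-1/\alpha_r$ are simultaneously binding, so locally $A$ is an orthant (an intersection of $R$ half-spaces), and a subgroup $H$ can hit this orthant in a set of relative measure zero, as the example shows. Consequently the inequality $\#\{p:u_p\leq N\}\leq N+1-|\bar E\cap[0,N]|$ gives only the trivial bound, and this route cannot be repaired.

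The paper's proof avoids this obstruction by not anchoring the ``forbidden'' windows to the integer lattice. It uses Weyl's theorem (applied to $l\mapsto(l\alpha_1/\alpha_2,\dots,l\alpha_1/\alpha_S)\bmod\mathbb{Z}^{S-1}$) to produce a positive-density set of multiples $l'\in \alpha_1\mathbb{Z}_{>0}$ such that every $\alpha_r$ has a multiple within $C\epsilon$ of $l'$; since each $\alpha_r>1+3\epsilon_1$, this clustering forces a gap in $\mathcal A$ of the form $[l'+\epsilon_1,\,l'+1+2\epsilon_1]$, a real interval of length $1+\epsilon_1$ that is free of $\mathcal A$ but generally not aligned with $\mathbb{Z}$. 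These gaps exist with positive density, and the $1$-separation of $\{u_p\}$ then costs an extra $\epsilon_1$ around each. In the example above, every integer unit interval with $k\geq 1$ meets $\mathcal A$, but there are still positive-density real gaps of length slightly more than $1$ just after the clustering points $l'$, which is exactly what your integer-anchored counting misses.
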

\begin{proof}
We can assume that $(\alpha^{-1}_1,\dots,\alpha^{-1}_S)$ (where $S\leq R$) form a $\mathbb{Q}$-basis of the $\mathbb{Q}$-vector space generated by $\alpha^{-1}_1,\dots,\alpha^{-1}_R$. Suppose that  
\begin{equation}\label{linearcomb}
\forall r\in\{S+1,\dots,R\}, \exists c_{r,1},\dots,c_{r,S} \in \mathbb{Z}, \quad c_r \alpha^{-1}_r= \sum_{i=1}^S c_{r,i}\alpha^{-1}_i.
\end{equation}
Since $1,\frac{\alpha_1}{\alpha_2},\dots,\frac{\alpha_1}{\alpha_S}$ are independent real numbers over $\mathbb{Q}$, by Weyl's equidistribution theorem, the sequence 
$$\{(l\frac{\alpha_1}{\alpha_2},\dots,l\frac{\alpha_1}{\alpha_S}); l\in \mathbb{N}\}$$
is equidistributed modulo $\mathbb{Z}^{S-1}$. In particular, if ${\epsilon}\in(0,1)$, then the density of the set 
$$\mathcal{L}:=\{l\in \mathbb{N}; \forall i\in\{2,\dots,S\}, \exists q\in \mathbb{N}, |l\frac{\alpha_1}{\alpha_i}-q|< \frac{\epsilon}{2}\}$$
exists and is equal to $\mathbf{d}(\mathcal{L})=\epsilon^{S-1}$, where the density $\mathbf{d}(\mathcal{S})$ of a discrete subset $\mathcal{S}$ of $\mathbb{R}$ is defined (when it exists) by
$$\mathbf{d}(\mathcal{L}):=\lim_{x\to \infty}\frac{\sharp\{l\in\mathcal{L}; l\leq x\}}{x}.$$
Now note that by (\ref{linearcomb}), for all $l\in \mathcal{L}$ and for all $r\in \{S+1,\dots,R\}$, there is an integer $q(l,r)$ such that
$$|c_{r}l \frac{\alpha_1}{\alpha_{r}} -q(r,l)| < (|c_{r,2}|+\dots+|c_{r,S}|)\epsilon.$$
Thus, if we define $M:=\prod_{r=S+1}^{R} c_{r}$, there is a constant $C$ depending only on $\alpha_1,\dots,\alpha_R$ such that for all $l\in \mathcal{L}$ and $r\in\{2,\dots,R\}$, there is an integer $q'(l,r)$ satisfying
\begin{equation} \label{proche}
|Ml \alpha_1 - q'(l,r)\alpha_r| < C\epsilon.
\end{equation}
Define 
$$M\alpha_1\mathcal{L}:=\{Ml\alpha_1 ; l\in \mathcal{L}\}.$$ We can compute the density of this discrete subset of $\mathbb{R}$:
$$\mathbf{d}(M\alpha_1\mathcal{L}) = \frac{\epsilon^{S-1}}{M\alpha_1}.$$
Let us define 
$$\mathcal{A}:=\{m\alpha_r; m\in\{1,2,3,\dots\}, r\in\{1,\dots,R\}\}.$$ Note that by assumption $\{u_p\}_{p\geq 1} \subset \mathcal{A}$. Fix $\epsilon_1\in(0,1)$ such that $\min_{r\in\{1,\dots,R\}}\alpha_r > 1+3\epsilon_1.$ We can choose $\epsilon<\epsilon_1/C$ and then (\ref{proche}) gives for all $l'\in M\alpha_1\mathcal{L}$:
\begin{equation} \label{formul}
[l'+\epsilon_1, l'+2\epsilon_1+1] \cap \mathcal{A} =\varnothing.
\end{equation}
For a positive real number $P$, consider the integer 
$$\bar{n}(P):=\sharp\{l'\in M\alpha_1\mathcal{L} ; l'+2\epsilon_1 \leq P \}.$$ Note that 
\begin{equation}\label{alo}
\lim_{P\to \infty}\frac{\bar{n}(P)}{P} = \mathbf{d}(M\alpha_1\mathcal{L}).
\end{equation}
From (\ref{formul}), (\ref{alo}) and the fact that the gap between any two consecutive $u_p$, $u_{p+1}$ is at least $1$, we deduce that for all $P\in \mathbb{R}$ large enough: 
\begin{align*}
 \sharp\{ p ; u_p \leq P \}  & \leq {P - \bar{n}(P)\epsilon_1}\\
 & \leq P(1-\mathbf{d}(M\alpha_1\mathcal{L})\frac{\epsilon_1}2).
 \end{align*}
 Consequently for $p\in \mathbb{N}$ large enough, $$u_{p} \geq (p-1) \frac{1}{1-\mathbf{d}(M\alpha_1\mathcal{L})\frac{\epsilon_1}2}$$
so the lemma is proved by taking for instance
$$\bar{\epsilon}_0:=\mathbf{d}(M\alpha_1\mathcal{L})\frac{\epsilon_1}{4}>0.$$

\end{proof}

Of course by rescaling, we can assume that $\mathcal{H}^n(\Sigma_1)=1$. Let us apply the previous lemma to the volume spectrum $\{\omega_p\}_{p\geq1}$ (see (\ref{realize}) and Theorem \ref{asymptotic} ({1})), and the $n$-volumes (resp. twice the $n$-volumes) of the two-sided (resp. one-sided) hypersurfaces $\Gamma_i$ (see (\ref{plusgrand})). We obtain for a certain $\bar{\epsilon}_0>0$ and all $p$ large enough:
$$\omega_p>(1+\bar{\epsilon}_0)p.$$
But Theorem \ref{asymptotic} (2) implies that $ \lim_{p\to \infty} p^{-1}\omega_p=1$ and in particular for $p$ large enough,
$$\omega_p<(1+\bar{\epsilon}_0)p.$$
These two inequalities give the desired contradiction. 

\begin{remarque} \label{remarque}
The arguments in this section actually yield a more precise version of Theorem \ref{Yau}: any compact manifold $M^{n+1}$ ($2\leq n \leq 6$) whose boundary has a contracting neighborhood in $M$ contains infinitely many closed embedded minimal hypersurfaces.
\end{remarque}

\section*{Appendix}

We show the monotonicity formula, Lemma \ref{monotonicity}, by slightly editing the standard proof in the Euclidean case which can be found in \cite[Chapter 4 Section 17, Chapter 8 Section 40]{Simon} for instance.

\begin{proof}[Proof of Lemma \ref{monotonicity}]
We fix $\xi \in B_1$ and define $g_{\xi}$ to be the flat metric on $B_3$ obtained by the constant $2$-tensor $\tilde{g}(\xi)$ on $B_3$. Note that while $g_{\xi}$ is flat, it may be different from $g_{eucl}$. However since by hypothesis $\|\tilde{g}-g_{eucl}\|_{C^1(\Omega)}\leq \eta$, we do have 
\begin{equation} \label{compar}
(1-\eta) g_{eucl} \leq g_{\xi}\leq (1+\eta)g_{eucl}.
\end{equation}
Let $r:B_3\to \mathbb{R}$ be the distance function to $\xi$ for the metric $g_\xi$, i.e. $r(x):=\|x-\xi\|_{g_\xi}$. Let $\tilde{\nabla}$ be the Levi-Civita connection of $\tilde{g}$, let $\tilde{\divergence}_S$ be the divergence along an $n$-plane $S$ computed with $\tilde{\nabla}$ and let $\tilde{\nabla}_S $ be the $\tilde{g}$-orthogonal projection of $\tilde{\nabla}$ on $S$. Let $h:B_3\to [0,1]$ be a smooth nonnegative function. Let $X$ be the compactly supported smooth vector field defined by
$$X_x=\gamma(r) h \frac{1}{2}\tilde{\nabla} r^2,$$
where $\gamma:\mathbb{R}\to [0,1]$ is a smooth function with 
$$\gamma'(t)\leq 0 \text{ }\forall t, \gamma(t)\equiv1 \text{ for } t\leq \rho, \gamma(t)\equiv0 \text{ for } t\geq 1.$$
From the fact that $V$ is $\tilde{g}$-stationary, we know that
\begin{equation}\label{lop}
\int \tilde{\divergence}_S X dV(x,S)=0.
\end{equation}
Let $(x,S)$ be an $n$-plane at a point $x\neq \xi$ and let $e_1,\dots,e_n$ be an orthonormal basis of $S$ for $\tilde{g}$. We compute
\begin{align}\label{lop1}
\begin{split}
\tilde{\divergence}_SX = & \sum_{i=1}^n \tilde{g}(\tilde{\nabla}_{e_i}X,e_i) \\
= & r\gamma'(r)h \|\tilde{\nabla}_S r\|^2_{\tilde{g}}  + r \gamma(r) \tilde{g}(\tilde{\nabla} r, \tilde{\nabla}_S h) +\gamma(r) h \sum_{i=1}^n \tilde{g}(\tilde{\nabla}_{e_i} (r\tilde{\nabla}r), e_i) \\
\geq & r\gamma'(r) h \|\tilde{\nabla}_S r\|^2_{\tilde{g}}  + r \gamma(r) \tilde{g}(\tilde{\nabla} r, \tilde{\nabla}_S h) + n\gamma(r) h -Cr\gamma(r) h\\
\geq & r\gamma'(r) h \|\tilde{\nabla}_S r\|^2_{\tilde{g}}  + r \gamma(r) \tilde{g}(\tilde{\nabla} r, \tilde{\nabla}_S h) + n\gamma(r) h \|\tilde{\nabla} r\|^2_{\tilde{g}} -2Cr\gamma(r) h\|\tilde{\nabla} r\|^2_{\tilde{g}}
\end{split}
\end{align}
Here $C$ is a positive constant depending only on $\eta$ and going to $0$ as $\eta\to 0$. The last inequality is obtained by a Taylor expansion at $\xi$ for instance ($\tilde{g} = g_\xi$ at $\xi$).
Now let us consider $\epsilon\in(0,1)$, and a smooth function $\varphi:\mathbb{R}\to [0,1] $ such that $\varphi(t)=1$ for $t\leq 1$, $\varphi(t)=0$ for $t\geq 1+\epsilon$ and $\varphi'(t)\leq 0$ for all $t$. Then we take $\gamma(r)=\varphi(r/\rho)$, assuming $(1+\epsilon)\rho<2$. Since
$$r\gamma'(r) = -\rho \frac{\partial}{\partial \rho}[\varphi(r/\rho)],$$ 
formula (\ref{lop}) and inequality (\ref{lop1}) yield
\begin{align*}
& n\int  \gamma h  \|\tilde{\nabla} r\|^2_{\tilde{g}} dV - \rho \frac{\partial}{\partial \rho} \int \varphi(r/\rho) h \|\tilde{\nabla}_S r\|^2_{\tilde{g}} dV(x,S) \\
& \leq - \int r \gamma(r)  \tilde{g}(\tilde{\nabla} r, \tilde{\nabla}_S h) dV(x,S) + 3C\rho \int \varphi(r/\rho) h\|\tilde{\nabla} r\|^2_{\tilde{g}}  dV.
\end{align*}
We define 
$$\tilde{I}(\rho):=\int   \varphi(r/\rho) h \|\tilde{\nabla} r\|^2_{\tilde{g}} dV.$$
Denoting by $\tilde{\nabla}^{\perp} r$ the difference $\tilde{\nabla}r-\tilde{\nabla}_S r$, we have the identity $\|\tilde{\nabla} r\|^2_{\tilde{g}}=\|\tilde{\nabla}_S r\|^2_{\tilde{g}}+\|\tilde{\nabla}^{\perp} r\|^2_{\tilde{g}}$. By the previous inequality:
\begin{align*}
n\tilde{I}(\rho) - \rho\frac{\partial}{\partial\rho}\tilde{I}(\rho) \leq & -\rho \frac{\partial}{\partial \rho} \int \varphi(r/\rho) h \|\tilde{\nabla}^{\perp} r\|^2_{\tilde{g}}dV(x,S) \\
 &- \int r \gamma(r)  \tilde{g}(\tilde{\nabla} r, \tilde{\nabla}_S h) dV(x,S) +3C\rho\tilde{I}(\rho)
\end{align*}
which means, after rearrangements, 
\begin{align*}
&\frac{\partial}{\partial \rho} \Big[\exp(3C\rho) \rho^{-n} \tilde{I}(\rho)\Big]\geq \\ & 
\exp(3C\rho)\rho^{-(n+1)}\Big[\rho \frac{\partial}{\partial \rho} \int \varphi(r/\rho) h \|\tilde{\nabla}^{\perp} r\|^2_{\tilde{g}}dV(x,S)+\int r\gamma(r)  \tilde{g}(\tilde{\nabla} r, \tilde{\nabla}_S h) dV(x,S)\Big].
\end{align*}
Denote by $B'(\xi,r)$ the ball of radius $r$ for $g_\xi$ centered at $\xi$. Integrating the last inequality from $\sigma$ to $\rho$ and letting $\epsilon\to0$,  we get
\begin{align*}
&\exp(3C\rho) \rho^{-n} \int_{B'(\xi,\rho)} h\|\tilde{\nabla} r\|^2_{\tilde{g}} d\|V\|  - \exp(3C\sigma) \sigma^{-n} \int_{B'(\xi,\sigma)} h\|\tilde{\nabla} r\|^2_{\tilde{g}} d\|V\|  \\ &+  \exp(3C\rho) \int_\sigma^\rho \tau^{-n}\int_{B'(\xi,\tau)\times \mathbf{G}(n+1,n)}  \|\tilde{\nabla}r\|_{\tilde{g}} \|\tilde{\nabla}_Sh\|_{\tilde{g}} \quad dV(x,S) d\tau \\
&\geq   \exp(3C\rho) \int_{\big(B'(\xi,\rho)\backslash B'(\xi,\sigma)\big)\times \mathbf{G}(n+1,n)}  h \frac{\|\tilde{\nabla}^{\perp} r\|^2_{\tilde{g}}}{r^n}dV(x,S),
\end{align*}
which is almost what we want. To end the proof, observe that by (\ref{compar}) there is a constant $c'$ closer and closer to $0$ as $\eta\to 0$ such that
$$B(\xi,\sigma) \subset B'(\xi,(1+c')\sigma) \subset B'(\xi,(1+c')\rho)\subset B(\xi,(1+c')^2\rho),$$
$$1\leq (1+c')\|\tilde{\nabla} r\|^2_{\tilde{g}} \leq (1+c')^2 \text{ and } \|\tilde{\nabla}_Sh\|_{\tilde{g}} \leq (1+c')|{\nabla}_Sh|,$$
where $|{\nabla}_Sh|$ is the norm of the gradient of $h$ along $S$ computed with $g_{eucl}$ and the previous inequality (which is valid for all $0<\sigma\leq \rho<1$) implies
\begin{align*}
&\exp(3C\sigma) \sigma^{-n} \int_{B(\xi,\sigma)} h d\|V\| \\
&\leq  (1+c')\exp(3C\sigma) \sigma^{-n} \int_{B'(\xi,(1+c')\sigma)} h \|\tilde{\nabla} r\|^2_{\tilde{g}} d\|V\|\\
&\leq   (1+c')\exp(3C(1+c')\rho) \Big[\rho^{-n} \int_{B'(\xi,(1+c')\rho)} h \|\tilde{\nabla} r\|^2_{\tilde{g}} d\|V\| \\
&\quad + \int_{(1+c')\sigma}^{(1+c')\rho} \tau^{-n} \int_{B'(\xi,\tau)\times \mathbf{G}(n+1,n)} \|\tilde{\nabla}r\|_{\tilde{g}} \|\tilde{\nabla}_Sh\|_{\tilde{g}} \quad dV(x,S) d\tau\\
& \leq (1+c')\exp(3C(1+c')\rho) \Big[\rho^{-n} (1+c')\int_{B(\xi,(1+c')^2\rho)} h d\|V\| \\
& \quad + (1+c')^{n+1}\int_{(1+c')^2\sigma}^{(1+c')^2\rho} \bar{\tau}^{-n} \int_{B(\xi,\bar{\tau})\times \mathbf{G}(n+1,n)} |{\nabla}_Sh| \quad dV(x,S)d\bar{\tau}\Big] \\
& \leq (1+c')^{n+2}\exp(3C(1+c')\rho) \Big[\rho^{-n} \int_{B(\xi,(1+c')^2\rho)} h d\|V\|\\
& \quad + \int_{\sigma}^{(1+c')^2\rho} \bar{\tau}^{-n} \int_{B(\xi,\bar{\tau})\times \mathbf{G}(n+1,n)} |{\nabla}_Sh| \quad dV(x,S)d\bar{\tau}\Big].
\end{align*}
So the lemma is proved by taking $\frak{c}=3C$ and $\frak{a} = (1+c')^{n+2}-1$.

\end{proof}

\bibliographystyle{plain}
\bibliography{biblio19_08_30bis}

\begin{thebibliography}{10}

\bibitem{Allardfirstvar}
William~K. Allard.
\newblock On the first variation of a varifold.
\newblock {\em Ann. of Math. (2)}, 95:417--491, 1972.

\bibitem{Alm1}
Frederick~Justin Almgren, Jr.
\newblock The homotopy groups of the integral cycle groups.
\newblock {\em Topology}, 1:257--299, 1962.

\bibitem{Bangert}
Victor Bangert.
\newblock On the existence of closed geodesics on two-spheres.
\newblock {\em Internat. J. Math.}, 4(1):1--10, 1993.

\bibitem{Birkhoff}
George~D. Birkhoff.
\newblock Dynamical systems with two degrees of freedom.
\newblock {\em Trans. Amer. Math. Soc.}, 18(2):199--300, 1917.

\bibitem{BrayBrendleNeves}
Hubert Bray, Simon Brendle, and Andre Neves.
\newblock Rigidity of area-minimizing two-spheres in three-manifolds.
\newblock {\em Comm. Anal. Geom.}, 18(4):821--830, 2010.

\bibitem{ChamLiok}
Gregory~R. Chambers and Yevgeny Liokumovich.
\newblock Existence of minimal hypersurfaces in complete manifolds of finite
  volume.
\newblock {\em Invent. Math.}, 219(1):179--217, 2020.

\bibitem{ChodMant}
Otis Chodosh and Christos Mantoulidis.
\newblock Minimal surfaces and the {A}llen-{C}ahn equation on 3-manifolds:
  index, multiplicity, and curvature estimates.
\newblock {\em Ann. of Math. (2)}, 191(1):213--328, 2020.

\bibitem{CM21}
Otis Chodosh and Christos Mantoulidis.
\newblock Minimal hypersurfaces with arbitrarily large area.
\newblock {\em Int. Math. Res. Not. IMRN}, (14):10841--10847, 2021.

\bibitem{ColHauMazRos}
Pascal Collin, Laurent Hauswirth, Laurent Mazet, and Harold Rosenberg.
\newblock Minimal surfaces in finite volume noncompact hyperbolic 3-manifolds.
\newblock {\em Trans. Amer. Math. Soc.}, 369(6):4293--4309, 2017.

\bibitem{Franks}
John Franks.
\newblock Geodesics on {$S^2$} and periodic points of annulus homeomorphisms.
\newblock {\em Invent. Math.}, 108(2):403--418, 1992.

\bibitem{GasGua}
Pedro Gaspar and Marco A.~M. Guaraco.
\newblock The {A}llen-{C}ahn equation on closed manifolds.
\newblock {\em Calc. Var. Partial Differential Equations}, 57(4):Paper No. 101,
  42, 2018.

\bibitem{Gromovnonlinearspectra}
Misha Gromov.
\newblock Dimension, nonlinear spectra and width.
\newblock In {\em Geometric aspects of functional analysis (1986/87)}, volume
  1317 of {\em Lecture Notes in Math.}, pages 132--184. Springer, Berlin, 1988.

\bibitem{Gromovwaist}
Misha Gromov.
\newblock Isoperimetry of waists and concentration of maps.
\newblock {\em Geom. Funct. Anal.}, 13(1):178--215, 2003.

\bibitem{GromovPlateauStein}
Misha Gromov.
\newblock Plateau-{S}tein manifolds.
\newblock {\em Cent. Eur. J. Math.}, 12(7):923--951, 2014.

\bibitem{GLWZ21}
Qiang Guang, Martin Man-chun Li, Zhichao Wang, and Xin Zhou.
\newblock Min-max theory for free boundary minimal hypersurfaces {II}: general
  {M}orse index bounds and applications.
\newblock {\em Math. Ann.}, 379(3-4):1395--1424, 2021.

\bibitem{Guaraco}
Marco A.~M. Guaraco.
\newblock Min-max for phase transitions and the existence of embedded minimal
  hypersurfaces.
\newblock {\em J. Differential Geom.}, 108(1):91--133, 2018.

\bibitem{Guth}
Larry Guth.
\newblock Minimax problems related to cup powers and {S}teenrod squares.
\newblock {\em Geom. Funct. Anal.}, 18(6):1917--1987, 2009.

\bibitem{IrieMaNe}
Kei Irie, Fernando~C. Marques, and Andr\'{e} Neves.
\newblock Density of minimal hypersurfaces for generic metrics.
\newblock {\em Ann. of Math. (2)}, 187(3):963--972, 2018.

\bibitem{KahnMarkovic}
Jeremy Kahn and Vladimir Markovi\'{c}.
\newblock Counting essential surfaces in a closed hyperbolic three-manifold.
\newblock {\em Geom. Topol.}, 16(1):601--624, 2012.

\bibitem{Kapouleas1}
Nikolaos Kapouleas.
\newblock {\em Constructions of minimal surfaces by gluing minimal immersions},
  volume~2 of {\em Clay Math. Proc.}
\newblock Amer. Math. Soc., Providence, RI, 2005.

\bibitem{Kapouleas2}
Nikolaos Kapouleas.
\newblock {\em Doubling and desingularization constructions for minimal
  surfaces}, volume~20 of {\em Adv. Lect. Math. (ALM)}.
\newblock Int. Press, Somerville, MA, 2011.

\bibitem{KetLioSong}
Daniel Ketover, Yevgeny Liokumovich, and Antoine Song.
\newblock On the existence of minimal {H}eegaard surfaces.
\newblock arXiv:1911.07161 [math.DG], 2019.

\bibitem{LiZhoufree}
Martin Man-Chun Li and Xin Zhou.
\newblock Min-max theory for free boundary minimal hypersurfaces
  {I}---{R}egularity theory.
\newblock {\em J. Differential Geom.}, 118(3):487--553, 2021.

\bibitem{Li19}
Yangyang Li.
\newblock Existence of infinitely many minimal hypersurfaces in
  higher-dimensional closed manifolds with generic metrics.
\newblock arXiv:1901.08440 [math.DG], to appear in J. Differential Geom., 2019.

\bibitem{Li20}
Yangyang Li.
\newblock On the existence of minimal hypersurfaces with arbitrarily large area
  and morse index.
\newblock arXiv:2001.07689 [math.DG], to appear in Geom. Topol., 2019.

\bibitem{LioMaNe}
Yevgeny Liokumovich, Fernando~C. Marques, and Andr\'{e} Neves.
\newblock Weyl law for the volume spectrum.
\newblock {\em Ann. of Math. (2)}, 187(3):933--961, 2018.

\bibitem{MaNe}
Fernando~C. Marques and Andr\'{e} Neves.
\newblock Rigidity of min-max minimal spheres in three-manifolds.
\newblock {\em Duke Math. J.}, 161(14):2725--2752, 2012.

\bibitem{MaNeWillmore}
Fernando~C. Marques and Andr\'{e} Neves.
\newblock Min-max theory and the {W}illmore conjecture.
\newblock {\em Ann. of Math. (2)}, 179(2):683--782, 2014.

\bibitem{MaNeindexbound}
Fernando~C. Marques and Andr\'{e} Neves.
\newblock Morse index and multiplicity of min-max minimal hypersurfaces.
\newblock {\em Camb. J. Math.}, 4(4):463--511, 2016.

\bibitem{MaNeinfinity}
Fernando~C. Marques and Andr\'{e} Neves.
\newblock Existence of infinitely many minimal hypersurfaces in positive
  {R}icci curvature.
\newblock {\em Invent. Math.}, 209(2):577--616, 2017.

\bibitem{MaNemultiplicityone}
Fernando~C. Marques and Andr\'{e} Neves.
\newblock Morse index of multiplicity one min-max minimal hypersurfaces.
\newblock {\em Adv. Math.}, 378:107527, 58, 2021.

\bibitem{MaNeSong}
Fernando~C. Marques, Andr\'{e} Neves, and Antoine Song.
\newblock Equidistribution of minimal hypersurfaces for generic metrics.
\newblock {\em Invent. Math.}, 216(2):421--443, 2019.

\bibitem{Montezuma1}
Rafael Montezuma.
\newblock Min-max minimal hypersurfaces in non-compact manifolds.
\newblock {\em J. Differential Geom.}, 103(3):475--519, 2016.

\bibitem{MorganTian}
John Morgan and Gang Tian.
\newblock {\em Ricci flow and the {P}oincar\'{e} conjecture}, volume~3 of {\em
  Clay Mathematics Monographs}.
\newblock American Mathematical Society, Providence, RI; Clay Mathematics
  Institute, Cambridge, MA, 2007.

\bibitem{P}
Jon~T. Pitts.
\newblock {\em Existence and regularity of minimal surfaces on {R}iemannian
  manifolds}, volume~27 of {\em Mathematical Notes}.
\newblock Princeton University Press, Princeton, N.J.; University of Tokyo
  Press, Tokyo, 1981.

\bibitem{Rubinsteinnotes}
J.~Hyam Rubinstein.
\newblock Minimal surfaces in geometric 3-manifolds.
\newblock 2005.

\bibitem{SchoenSimon}
Richard Schoen and Leon Simon.
\newblock Regularity of stable minimal hypersurfaces.
\newblock {\em Comm. Pure Appl. Math.}, 34(6):741--797, 1981.

\bibitem{Sharp}
Ben Sharp.
\newblock Compactness of minimal hypersurfaces with bounded index.
\newblock {\em J. Differential Geom.}, 106(2):317--339, 2017.

\bibitem{Simon}
Leon Simon.
\newblock {\em Lectures on geometric measure theory}, volume~3 of {\em
  Proceedings of the Centre for Mathematical Analysis, Australian National
  University}.
\newblock Australian National University, Centre for Mathematical Analysis,
  Canberra, 1983.

\bibitem{Smith}
{Francis R.} Smith.
\newblock {\em On the existence of embedded minimal {$2$-}spheres in the
  {$3$-}sphere, endowed with an arbitrary {R}iemannian metric}.
\newblock PhD thesis, 1982.

\bibitem{SolomonWhiteMP}
Bruce Solomon and Brian White.
\newblock A strong maximum principle for varifolds that are stationary with
  respect to even parametric elliptic functionals.
\newblock {\em Indiana Univ. Math. J.}, 38(3):683--691, 1989.

\bibitem{Antoine19}
Antoine Song.
\newblock A dichotomy for minimal hypersurfaces in manifolds thick at infinity.
\newblock {\em To appear in Ann. Sci. Ec. Norm. Sup\'{e}r.}

\bibitem{Song-Zhou20}
Antoine Song and Xin Zhou.
\newblock Generic scarring for minimal hypersurfaces along stable
  hypersurfaces.
\newblock {\em Geom. Funct. Anal.}, 31(4):948--980, 2021.

\bibitem{StevensSun21}
James Stevens and Ao~Sun.
\newblock Existence of minimal hypersurfaces with arbitrarily large area in
  full generality.
\newblock arXiv:2112.02389 [math.DG], 2021.

\bibitem{SWZ20}
Ao~Sun, Zhichao Wang, and Xin Zhou.
\newblock Multiplicity one for min-max theory in compact manifolds with
  boundary and its applications.
\newblock arXiv:2011.04136 [math.DG], 2020.

\bibitem{WangZhichao20}
Zhichao Wang.
\newblock Existence of infinitely many free boundary minimal hypersurfaces.
\newblock arXiv:2001.04674 [math.DG], to appear in J. Differential Geom., 2020.

\bibitem{WhiteMP}
Brian White.
\newblock The maximum principle for minimal varieties of arbitrary codimension.
\newblock {\em Comm. Anal. Geom.}, 18(3):421--432, 2010.

\bibitem{Yauproblemsection}
{Shing-Tung} Yau.
\newblock {\em Seminar in Differential Geometry, Problem Section}.
\newblock pages 669-706, Ann. of Math. Stud., 102, Princeton Univ. Press,
  Princeton, N.J., 1982.

\bibitem{Zhoumultiplicityone}
Xin Zhou.
\newblock On the multiplicity one conjecture in min-max theory.
\newblock {\em Ann. of Math. (2)}, 192(3):767--820, 2020.

\end{thebibliography}

\end{document}